\ifdef{\crop}{%
\usepackage[includeheadfoot,twoside=False,paperwidth=448pt,paperheight=587pt,rmargin=15pt,lmargin=15pt,tmargin=15pt,bmargin=15pt]{geometry}%
}{%
\setlength{\topmargin}{22mm}
\addtolength{\topmargin}{-1in}
\setlength{\oddsidemargin}{27mm}
\addtolength{\oddsidemargin}{-1in}
\setlength{\evensidemargin}{27mm}
\addtolength{\evensidemargin}{-1in}
\setlength{\textwidth}{156mm}
\setlength{\textheight}{240mm}
}%
\newcolumntype{C}{>{\centering\arraybackslash}X} 
\theoremstyle{plain}
\newtheorem{thm}{Theorem}[section]
\newtheorem*{thm*}{Theorem}
\newaliascnt{prop}{thm}
\newaliascnt{cor}{thm}
\newaliascnt{lem}{thm}
\newaliascnt{claim}{thm}
\newaliascnt{defn}{thm}
\newaliascnt{ques}{thm}
\newaliascnt{conj}{thm}
\newaliascnt{fact}{thm}
\newaliascnt{rem}{thm}
\newaliascnt{ex}{thm}
\newaliascnt{sett}{thm}
\newtheorem{prop}[prop]{Proposition}
\newtheorem{cor}[cor]{Corollary}
\newtheorem{lem}[lem]{Lemma}
\newtheorem{claim}[claim]{Claim}
\newtheorem*{prop*}{Proposition}
\newtheorem*{cor*}{Corollary}
\newtheorem*{lem*}{Lemma}
\newtheorem*{claim*}{Claim}
\theoremstyle{definition}
\newtheorem{defn}[defn]{Definition}
\newtheorem{sett}[sett]{Setting}
\newtheorem*{defn*}{Definition}
\newtheorem*{ques*}{Question}
\newtheorem*{conj*}{Conjecture}
\newtheorem*{prob*}{Problem}
\newtheorem{rem}[rem]{Remark}
\newtheorem{ex}[ex]{Example}
\newtheorem*{fact*}{Fact}
\newtheorem*{rem*}{Remark}
\newtheorem*{ex*}{Example}
\def\textsectionN~{\textsection{}}
\renewcommand\phi{\varphi}
\renewcommand\epsilon{\varepsilon}
\renewcommand\leq{\leqslant}
\renewcommand\geq{\geqslant}
\newcommand{\set}{  \@ifstar{\@setstar}{\@set}}\newcommand{\@setstar}[2]{\{\, #1 \mid #2 \,\}}
\newcommand{\@set}[1]{\{ #1 \}}
\newcommand{\trans}[1][1]{\raisebox{#1ex}{\scriptsize\kern0.1em$t$\kern-0.1em}}
\DeclareMathOperator{\Hom}{Hom}
\DeclareMathOperator{\Pic}{Pic}
\DeclareMathOperator{\id}{id}
\DeclareMathOperator{\pr}{pr}
\def\Z{\mathbb{Z}}
\def\Q{\mathbb{Q}}
\def\R{\mathbb{R}}
\def\C{\mathbb{C}}
\def\r+{\mathbb{R}_{\geq 0}}
\def\ep{\varepsilon}
\def\r+{{\R}_{\geq 0}}
\def\q+{{\Q}_{\geq 0}}
\def\*c{\C^{\times}}
\def\<{\langle}
\def\>{\rangle}
\def\ni{\noindent}
\def\ra{\rightarrow}
\def\lra{\Leftrightarrow}
\def\C{\mathbb {C}}
\def\Q{\mathbb {Q}}
\def\R{\mathbb {R}}
\def\Z{\mathbb {Z}}
\newcommand{\calo}{\mathcal {O}}
\title[Projective normality and basepoint-freeness thresholds of abelian varieties]{Projective normality and basepoint-freeness thresholds of general polarized abelian varieties}
\author[A.~Ito]{Atsushi~Ito}
\address{Department of Mathematics, Faculty of Natural Science and Technology,
Okayama University,
Okayama, Japan}
\email{ito-atsushi@okayama-u.ac.jp}
\subjclass[2020]{14C20,14K99}
\keywords{Projective normality, Abelian variety, Basepoint-freeness threshold}
\begin{document}

\maketitle

\begin{abstract}
For a polarized abelian variety $(X,L)$, Z.~Jiang and G.~Pareschi introduce an invariant $\beta(X,L)$, called the basepoint-freeness threshold.
Using this invariant,
we show that a general polarized abelian variety $(X,L)$ of dimension $g$ is projectively normal
if $\chi(L) \geq 2^{2g-1}$ and the type of $L$ is not $(2,4,\dots,4)$.
This bound is sharp since it is known that any polarized abelian variety of type $(2,4,\dots,4)$ is not projectively normal.
We also give an application of $\beta(X,L)$ to the Infinitesimal Torelli Theorem for $Y \in |L|$.
\end{abstract}

\section{Introduction}

Throughout this paper, we work over the complex number field $\C$.

Let $(X,L) $ be a polarized abelian variety.
Then $L^{\otimes m}$ is projectively normal if $m \geq 3$  \cite{MR480543},
or $m=2$ and $L$ is basepoint free  (see \cite{MR966402} for a more precise version).
However, the projective normality of $L$ itself is more subtle.

In this paper, we study the projective normality of general $(X,L)$ of a given type.
Recall that for a polarized abelian variety $(X,L)$ of dimension $g$,
we can associate a sequence of positive integers $(d_1,\dots,d_g)$ with $d_1|d_2|\cdots|d_g$, called the \emph{type} of $L$. 
In \cite{MR2964474},
J-M.~Hwang and W-K.~To show that a general polarized abelian variety $(X,L)$ of a given type is projectively normal if $\chi(L) \geq (8g)^g/2g!$.

The purpose of this paper is to give a  sharp lower bound of $\chi(L)$ which implies projective normality of general $(X,L) $ as follows:

\begin{thm}\label{thm_proj_normality}
Let $(X,L)$ be a general polarized abelian variety of dimension $g$.
Then  $L$ is projectively normal if $\chi(L) \geq 2^{2g-1}$ and the type of $(X,L)$ is not $(2,4,\dots,4)$.

In particular,  $L$ is projectively normal if $\chi(L) > 2^{2g-1}$.
\end{thm}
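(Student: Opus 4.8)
The plan is to deduce \autoref{thm_proj_normality} from the theory of the basepoint-freeness threshold $\beta(X,L)$ introduced by Jiang--Pareschi. The key input is the criterion, valid for an arbitrary polarized abelian variety $(X,L)$, that if $\beta(X,L) < \tfrac{1}{2}$ then $L$ is projectively normal (more generally, $\beta(X,L) < \tfrac{1}{p+2}$ implies property $(N_p)$). So the task reduces to showing that a \emph{general} $(X,L)$ of a given type satisfies $\beta(X,L) < \tfrac12$ under the stated numerical hypotheses. I expect the paper to have established, or to establish shortly, an upper bound for $\beta(X,L)$ for general $(X,L)$ in terms of the type $(d_1,\dots,d_g)$, or equivalently in terms of $\chi(L) = d_1 d_2 \cdots d_g$; the heart of the argument is that this bound drops below $\tfrac12$ precisely when $\chi(L)\geq 2^{2g-1}$ and we are not in the excluded type.

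First I would recall the semicontinuity of $\beta$: the function $(X,L)\mapsto \beta(X,L)$ behaves well in families, so it suffices to exhibit \emph{one} polarized abelian variety of the given type with $\beta < \tfrac12$, and then generality takes care of the rest. Second, I would reduce to the case where $L$ is primitive-ish or handle the general type by a product/pullback construction: taking products of lower-dimensional examples, or pulling back along isogenies, lets one bound $\beta$ of the total space in terms of $\beta$ of the factors, since $\beta$ is subadditive (or at least controllable) under such operations. Third, for the building blocks — say elliptic curves or low-dimensional abelian varieties with an explicit polarization — one computes or estimates $\beta$ directly; for an elliptic curve $E$ with a polarization of degree $d$ one has the clean value $\beta(E,L) = 1/d$ (when $d\geq 2$), and these plug into the combinatorial estimate.

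The combinatorial core is then an optimization: among all types $(d_1,\dots,d_g)$ with $d_1|\cdots|d_g$ and $\prod d_i = \chi(L)$ fixed (or $\geq 2^{2g-1}$), one must check that the resulting upper bound on $\beta$ is $<\tfrac12$, with the unique tight/failing case being $(2,4,\dots,4)$, where $\chi = 2\cdot 4^{g-1} = 2^{2g-1}$ and $\beta$ equals $\tfrac12$ exactly (consistent with the known failure of projective normality there). I would organize this as: (i) if $d_1\geq 3$, or if $d_1 = 2$ but some $d_i > 4$, or more than the minimal number of $d_i$'s exceed $2$, then strict inequality holds; (ii) the boundary type $(2,4,\dots,4)$ is exactly excluded by hypothesis; (iii) all remaining types have $\chi(L) < 2^{2g-1}$, hence are outside the hypothesis. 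The last sentence of the theorem — that $\chi(L) > 2^{2g-1}$ alone suffices — follows because the only type with $\chi(L) = 2^{2g-1}$ that is \emph{not} of the form forcing strict inequality is $(2,4,\dots,4)$ itself, so once $\chi$ strictly exceeds $2^{2g-1}$ the excluded type cannot occur and no separate hypothesis on the type is needed.

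The main obstacle I anticipate is proving a sufficiently sharp upper bound on $\beta(X,L)$ for general $(X,L)$ — sharp enough to be exactly $<\tfrac12$ in all the non-excluded cases rather than merely in an asymptotic range. Hwang--To's bound $\chi(L)\geq (8g)^g/2g!$ is far from sharp, so the new ingredient must be a genuinely better estimate, presumably exploiting the degeneration of $(X,L)$ to a product or to an abelian variety with large automorphism/symmetry so that $\beta$ can be computed rather than merely bounded. Controlling the edge case $(2,4,\dots,4)$ and verifying that it is the \emph{only} edge case — i.e. that the bound is strict for every other type with $\chi = 2^{2g-1}$ — is the delicate combinatorial point where the "$d_1 \mid d_2 \mid \cdots \mid d_g$" divisibility constraint must be used essentially.
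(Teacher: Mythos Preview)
Your high-level strategy matches the paper's: reduce to $\beta(X,L) < \tfrac12$ via \autoref{thm_Bpf_threshold}, use semicontinuity (\autoref{rem_beta(D)}) so that one works with $\beta(D) \coloneqq \inf_{(X,L)} \beta(X,L)$ over $(X,L)$ of type $D$, and bound $\beta(D)$ by the product rule $\beta(D_1 \times D_2) \leq \max\{\beta(D_1),\beta(D_2)\}$ (\autoref{prop_beta(d_1,...,d_g)}).

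The genuine gap is in your case analysis (i)--(iii), which never treats $d_1 = 1$. Step (iii) asserts that after handling $d_1 \geq 2$ all remaining types have $\chi < 2^{2g-1}$; this is false --- take $D = (1^{(g-1)},d)$ with $d \geq 2^{2g-1}$. Worse, such a $D$ cannot be handled by your proposed building blocks: when $d$ is a prime power (e.g.\ $d = 2^{2g-1}$) every decomposition $D = D_1 \times D_2$ with both $\lambda(D_i) \geq 1$ forces one factor to be $(1^{(k)})$, which has $\beta = 1$, so the product bound gives nothing. In fact the paper's \autoref{lem_reduction} shows that after the easy reductions one may assume $d_1 = 1$, so the case you declared vacuous is the entire substance of the theorem.

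What fills this gap is a bound on $\beta$ coming not from products but from an explicit \emph{non-product} polarization $\sum a_i F_i + c\Gamma$ on $E_1 \times \cdots \times E_g$ (the construction of \autoref{sec_construction} and \autoref{thm_beta_general_type}). This yields building blocks such as $\beta(1^{(n-1)},2^{(n')},d) < \tfrac12$ whenever $d \geq 2^{n+1}+2n'-1$ (\autoref{cor_111222d}) and $\beta(1,c,c) < \tfrac12$ for $c \geq 5$ (\autoref{ex_g=3_gen_type_variant}). The proof of \autoref{thm_beta<1/2} then runs by induction on $g$: one peels off from $D$ a factor $D_1$ of one of these explicit shapes with $\chi(D_1) \leq 4^{\lambda(D_1)}$, so that the residual $D_2$ still has $\chi(D_2) \geq 2^{2\lambda(D_2)-1}$ and induction applies (\autoref{lem_decomp_4^g}). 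The case split on $d_g$ versus $2n'+4$, on the integer $m$ with $2^{m+1}-1 \leq d_g - 2n' < 2^{m+2}-1$, and the separate treatment of $d_g \in \{5,6\}$ via $(1,5,5)$ and $(1,6,6)$, are all there to guarantee such a $D_1$ exists with the required $\chi$ bound. Your sketch contains neither the non-product input nor this balancing mechanism.
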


We note that $(8g)^g/2g!  \sim (8e)^g/2\sqrt{2 \pi g} $ for $g \gg 1$ by Stirling's formula.
Hence the bound $ 2^{2g-1}=4^g/2$ replaces the base $8e$ of the exponent by $4$.
In fact,
a polarized abelian variety $(X,L)$ of type $(2,4,\dots,4) $ satisfies $\chi(L)=  2^{2g-1}$ and is never projectively normal by \cite{MR1638159}.
Hence the bound $\chi(L) > 2^{2g-1}$ in \autoref{thm_proj_normality} is sharp.

\vspace{2mm}

In the rest of  Introduction, we explain the strategy of the proof of \autoref{thm_proj_normality}.
Recently, 
Z.~Jiang and G.~Pareschi  \cite{MR4157109} introduce
an invariant $\beta(L)=\beta(X,L) \in (0,1]$,
called the \emph{basepoint-freeness threshold}, 
for 
a polarized abelian variety $(X,L)$,
and prove the following theorem.

\begin{thm}[{\cite[Theorem D, Corollary E]{MR4157109}}]\label{thm_Bpf_threshold}
Let $L$ be an ample line bundle on an abelian variety $X$.
Then 
\begin{enumerate}
\item $L$ is basepoint free if and only if $\beta(L) <1$.
\item $L$ is projectively normal if $\beta(L) <1/2$.
\end{enumerate}
\end{thm}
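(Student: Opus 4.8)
The plan is to take as the working \emph{definition} of $\beta(L)$ the threshold value read off from the Jiang--Pareschi cohomological rank functions of the ideal sheaf $\cali_0$ of the origin $0 \in X$, and then to convert the two numerical inequalities into geometry via the generic-vanishing machinery of Pareschi--Popa. Recall that for $x \in \Q$ the quantity $h^i_{\cali_0}(xL)$ is defined by pulling $\cali_0$ back along the multiplication isogenies $n_X \colon X \to X$, twisting by a suitable power of $L$ and a general $\alpha \in \Pic^0(X)$, and normalising by $n^{2g}$; these functions are continuous and piecewise polynomial in $x$, and $\beta(L)$ is the infimum of the $x>0$ for which the $\Q$-twisted sheaf $\cali_0\lin{xL}$ is $GV$. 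The two inputs I would use throughout are: \textbf{(a)} strictly above the threshold, i.e.\ for $x > \beta(L)$, the $\Q$-twist $\cali_0\lin{xL}$ is not merely $GV$ but $\mathrm{IT}_0$, so that $H^i(\cali_0\lin{xL}\otimes\alpha)=0$ for every $\alpha \in \Pic^0(X)$ and every $i>0$; and \textbf{(b)} the Pareschi--Popa chain $\mathrm{IT}_0 \Rightarrow M\text{-regular} \Rightarrow$ continuously globally generated (CGG), together with the multiplication theorem asserting that a product of CGG sheaves yields surjective multiplication maps on global sections.

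For part (1), first suppose $\beta(L) < 1$ and apply (a) with $x=1$: the sheaf $\cali_0 \otimes L = \cali_0\lin{1\cdot L}$ is $\mathrm{IT}_0$. Twisting the structure sequence $0 \to \cali_0 \to \sO_X \to \sO_0 \to 0$ by $L \otimes \alpha$ and taking cohomology, the vanishing $H^1(\cali_0 \otimes L \otimes \alpha)=0$ forces the restriction $H^0(L\otimes\alpha) \to (L\otimes\alpha)|_0$ to be surjective for \emph{every} $\alpha$. Since a point $x$ lies in $\Bs(L)$ precisely when the corresponding restriction vanishes, and translating by $x$ identifies $\cali_x$ with $t_{-x}^*\cali_0$ while changing $L$ only by an element of $\Pic^0(X)$, surjectivity for all $\alpha$ is equivalent to $\Bs(L)=\emptyset$. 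Hence $L$ is basepoint free. Conversely, if $L$ is basepoint free then $H^0(L)\otimes\sO_X \to L$ is surjective; feeding this into the long exact sequence and using the continuity and piecewise-polynomiality of $x \mapsto h^1_{\cali_0}(xL)$ forces this function to vanish for $x$ slightly below $1$, that is $\beta(L)<1$.

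For part (2), I would reduce projective normality to the surjectivity of the multiplication maps $H^0(L)\otimes H^0(L^{\otimes n}) \to H^0(L^{\otimes(n+1)})$ for all $n \ge 1$. By part (1) the hypothesis $\beta(L)<1/2$ already makes $L$ basepoint free; its genuine role is to leave room for a \emph{two-factor} argument. Through Pareschi--Popa's theory the relevant multiplication map is governed by the continuous global generation of the $\Q$-twisted ideal sheaf $\cali_0\lin{xL}$, and because each of the two factors in a product $H^0\otimes H^0$ contributes a twist of size roughly $1/2$, surjectivity onto $H^0(L^{\otimes 2})$ requires $\cali_0\lin{xL}$ to be $\mathrm{IT}_0$ for some $x<1/2$. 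Choosing $\beta(L)<x<1/2$, statement (b) makes $\cali_0\lin{xL}$ CGG, and feeding two such factors into the multiplication theorem on $X\times X$ (via the difference morphism $X\times X \to X$, $(x,y)\mapsto x-y$) yields surjectivity of $H^0(L\otimes\alpha)\otimes H^0(L\otimes\alpha') \to H^0(L^{\otimes 2}\otimes\alpha\alpha')$ for generic $\alpha,\alpha'$; the maps for $n \ge 2$ follow from the analogous splitting of $L^{\otimes(n+1)}$.

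The step I expect to be the main obstacle is the passage from surjectivity \emph{for generic} twists $\alpha,\alpha'$ --- which the CGG/$M$-regularity formalism delivers cheaply --- to surjectivity \emph{at the trivial twist}, which is what genuine projective normality demands. I would handle this by exploiting the theta (Heisenberg) group of $L$, whose action permutes the relevant sections transitively enough to upgrade generic surjectivity to surjectivity everywhere, combined with the semicontinuity of the rank of the multiplication map. A secondary technical point is the bookkeeping at the threshold itself: one must justify input (a), namely that the strict inequality $x>\beta(L)$ upgrades the boundary $GV$ property to full $\mathrm{IT}_0$, which rests on the piecewise-polynomial description of the cohomological rank functions and the behaviour of their one-sided derivatives at the critical value.
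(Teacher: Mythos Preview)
The paper does not prove this theorem at all: it is quoted verbatim as a result of Jiang--Pareschi (their Theorem~D and Corollary~E) and used as a black box throughout. There is therefore nothing in the paper to compare your argument against; you have written substantially more than the paper does for this statement.

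That said, since you have sketched an argument, a brief comment on its content. Your outline for part~(1) in the forward direction is essentially the right mechanism. The converse direction is where your sketch is loosest: knowing that $L$ is basepoint free tells you that $\cali_0 \otimes L \otimes \alpha$ has vanishing $H^1$ for every $\alpha$, i.e.\ that $\cali_0\lin{1\cdot L}$ is $\mathrm{IT}_0$; but this only gives $\beta(L) \le 1$ from the definition as an infimum, and your appeal to ``continuity and piecewise-polynomiality'' to push strictly below $1$ is not an argument as stated. What one actually uses is that the $\mathrm{IT}_0$ condition is open in the twist parameter, so $\mathrm{IT}_0$ at $x=1$ forces it on an interval $(1-\epsilon,1]$. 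For part~(2) your identification of the generic-versus-specific-twist passage as the crux is accurate, and invoking the theta group is one way this is handled in the literature; but again, none of this is in the present paper, which simply cites the result.
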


\autoref{thm_Bpf_threshold} is generalized to higher syzygies and jet ampleness by 
 \cite{MR4114062}, \cite{CaucciThesis}, \cite{Ito:2021aa}, and 
it turned out that 
$\beta(X,L)$ is quite useful to investigate the linear system of $L$.
Hence it is important to give upper bounds of $\beta(L)$ as in \cite{Ito:2020aa}, \cite{MR4395094}, \cite{Ito:2020ab}, \cite{MR4363833}.

In  \cite{Ito:2020ab},
we give upper bounds of $\beta(X,L)$ for general $(X,L)$ of type $(1,\dots,1,d)$.
In this paper, we give a generalization of the result to arbitrary type (see \autoref{thm_beta_general_type}).
Combining the bounds in \autoref{thm_beta_general_type} and an idea in \cite{MR2231163} which decomposes types to lower dimensional ones,
we can show $\beta(X,L) <1/2$ for $(X,L)$ in the statement of \autoref{thm_proj_normality}.
Hence \autoref{thm_proj_normality} follows from \autoref{thm_Bpf_threshold} (2).

\vspace{1mm}
We also give an application of $\beta(X,L)$ to the Infinitesimal Torelli Theorem  for hypersurfaces in abelian varieties. 
Recall that a smooth projective variety $Y$ satisfies \emph{the Infinitesimal Torelli Theorem} if the natural map
\begin{align*}
H^1(Y , T_Y) \rightarrow \Hom(H^0(Y,K_Y), H^1(V, \Omega^{\dim Y-1}_Y) )
\end{align*}
is injective.
P.~Blo{\ss} investigates the case $ Y \in |L| $ for a polarized abelian variety $(X,L)$
and shows that the Infinitesimal Torelli Theorem holds for any smooth $Y \in |L|$ if $X$ is simple and $h^0(L) > (g/(g-1))^{g} \cdot g!$ in  \cite[Theorem 1.1]{Bloss:2019aa}.
Under the assumption that $(X,L)$ is general,
we refine this result as follows:

\begin{thm}\label{cor_ITT}
Let $(X,L)$ be a general polarized abelian variety of dimension $g \geq 2$ and of type $(d_1,\dots,d_g)$.
If $d_1 + \dots +d_g \geq 2g $ and $(d_1,\dots,d_g)$ is not in the following list, then  the Infinitesimal Torelli Theorem holds for any smooth $Y \in |L|$.
\begin{itemize}
\item $g=2$ and $ (d_1,d_2) = (1,3), (1,4)$, 
\item $g=3 $ and $ (d_1,d_2,d_3) = (1,1,4),(1,1,5), (1,1,6) , (1,3,3)$, 
\item $g \geq 4$ and $  (d_1,\dots,d_g) = (1,\dots,1,g+1) , (1,\dots,1,g+2), (1,\dots,1,2,g)$.
\end{itemize}
\end{thm}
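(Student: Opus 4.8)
The plan is to reduce the Infinitesimal Torelli Theorem for $Y \in |L|$ to a bound on $\beta(X,L)$ and then apply the estimates that underlie \autoref{thm_proj_normality}. The first step is to recall the cohomological criterion: by the standard setup going back to Griffiths and used by Blo{\ss}, the map $H^1(Y,T_Y) \to \Hom(H^0(Y,K_Y),H^1(Y,\Omega^{g-2}_Y))$ is controlled, via the adjunction sequence $0 \to T_Y \to T_X|_Y \to N_{Y/X} \to 0$ and $N_{Y/X} \cong L|_Y$, by the surjectivity of the multiplication map $H^0(Y, L|_Y) \otimes H^0(Y, K_Y) \to H^0(Y, K_Y \otimes L|_Y)$ (equivalently, since $K_X \cong \sO_X$, by surjectivity of $H^0(L|_Y)^{\otimes 2} \to H^0(L^{\otimes 2}|_Y)$). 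Pulling this back to $X$ through the restriction sequences $0 \to \sO_X \to L \to L|_Y \to 0$ and $0 \to L \to L^{\otimes 2} \to L^{\otimes 2}|_Y \to 0$, one sees that it suffices to prove surjectivity of $H^0(X,L) \otimes H^0(X,L) \to H^0(X,L^{\otimes 2})$, i.e. projective normality of $(X,L)$ in degree $2$, together with the vanishing $H^1(X,\sO_X) \otimes H^0(X,L) \to H^1(X,L)$ being zero (automatic since $H^1(X,L)=0$ for $L$ ample). So the Infinitesimal Torelli Theorem for every smooth $Y \in |L|$ follows once $(X,L)$ is projectively normal, which by \autoref{thm_Bpf_threshold}(2) holds when $\beta(X,L) < 1/2$.

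The second step is to run the same argument used for \autoref{thm_proj_normality}: combine the upper bounds for $\beta(X,L)$ of general $(X,L)$ of a given type (the generalization to arbitrary type announced as \autoref{thm_beta_general_type}) with the decomposition-into-lower-dimensions trick from \cite{MR2231163}, and check when the resulting bound is $< 1/2$. The hypothesis $d_1 + \dots + d_g \ge 2g$ is the natural "size" condition replacing $\chi(L) \ge 2^{2g-1}$; indeed if each $d_i \ge 2$ then $\chi(L) = d_1 \cdots d_g \ge 2^g$, but the relevant bound on $\beta$ coming from \autoref{thm_beta_general_type} is expected to be roughly $g/(d_1+\dots+d_g)$ or a similar ratio, which is $< 1/2$ exactly when $d_1+\dots+d_g > 2g$, with the boundary case $d_1+\dots+d_g = 2g$ needing separate treatment. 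The exceptional list is then precisely the set of types with $d_1+\dots+d_g \ge 2g$ for which the $\beta < 1/2$ estimate fails — these are small perturbations of $(1,\dots,1,g+1)$, $(1,\dots,1,g+2)$, and $(1,\dots,1,2,g)$ in high dimension, and the low-dimensional sporadic cases in $g=2,3$ where the decomposition leaves a base case (an elliptic curve polarized by $\sO(3)$, say, for which projective normality genuinely fails) that the bound cannot overcome.

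The third step is the verification that, outside the list, the bound indeed gives $\beta(X,L) < 1/2$: one writes $(d_1,\dots,d_g)$, possibly after the decomposition of \cite{MR2231163} into blocks, applies \autoref{thm_beta_general_type} to each block, and adds up; the arithmetic of when $\sum d_i \ge 2g$ forces the estimate below $1/2$ is elementary but must be done case-by-case for small $g$ because the decomposition method is most efficient there and one must rule out each borderline type by hand. The main obstacle I expect is exactly this bookkeeping at the boundary $\sum d_i = 2g$ and in low dimension: the generic bound on $\beta$ is not sharp enough to handle all such types uniformly, so one must either find a better decomposition for the non-excluded ones or verify directly (e.g. by a degeneration or explicit multiplication-map computation) that the borderline types not on the list are still projectively normal, while confirming that each type on the list really is an exception — for the latter one can often point to \cite{MR1638159} or to the failure of projective normality for the "wrong" base case produced by the decomposition. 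Once this case analysis is complete, the theorem follows by combining it with the first step.
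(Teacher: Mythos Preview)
Your reduction in the first step is wrong, and this causes the whole scheme to collapse. For a hypersurface $Y\in|L|$ in an abelian $g$-fold, the cohomological criterion of Blo{\ss} (\cite[Lemma~2.1]{Bloss:2019aa}) reduces the Infinitesimal Torelli Theorem to the surjectivity of
\[
H^0(X,L)\otimes H^0(X,L^{\otimes(g-1)}) \longrightarrow H^0(X,L^{\otimes g}),
\]
not of $H^0(L)\otimes H^0(L)\to H^0(L^{\otimes 2})$. The point is that the target of the period map involves $H^1(Y,\Omega_Y^{g-2})$, and unwinding this via the (co)normal sequence brings in $L^{\otimes(g-1)}$. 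Consequently the correct threshold, via the same argument as in \cite[Corollary~8.2(b)]{MR4157109}, is $\beta(X,L)<(g-1)/g$, not $\beta(X,L)<1/2$; and it is exactly the inequality $\beta(D)<(g-1)/g$ for $|D|\ge 2g$, with the listed exceptions, that the paper establishes in \autoref{prop_refiniment}(3).

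Your stronger condition $\beta(D)<1/2$ is simply false for many types covered by the theorem. For instance, $D=(1,1,7)$ has $|D|=9\ge 6$ and is not on the exception list, yet $\beta(1,1,7)\ge 1/\sqrt[3]{7}>1/2$ by \autoref{lem_divisor}(iv). More generally $\beta(1^{(g-1)},d)\ge d^{-1/g}>1/2$ whenever $d<2^g$, so your approach would exclude \emph{every} type $(1,\dots,1,d)$ with $g+1\le d<2^g$, far more than the actual list. The $g=2$ case is also not handled by your argument: here $(g-1)/g=1/2$ and the types $(1,5),(1,6),(2,2),(2,4)$ all satisfy $\beta(D)\ge 1/2$ (see the proof of \autoref{lem_reduction}), yet they are not excluded in the statement; the paper treats $g=2$ separately by invoking the classification of hyperelliptic curves on abelian surfaces from \cite{MR3968899}, and no $\beta$-bound suffices there.
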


\vspace{2mm}
This paper is organized as follows. 
In \autoref{sec_preliminary}, we recall some notation.
In \autoref{sec_construction}, we give examples of polarized abelian varieties whose types can be explicitly determined.
In \autoref{sec_any_type}, we prove \autoref{thm_beta_general_type}, which gives some upper bounds of $\beta(X, L)$ for general $(X,L)$.
In \autoref{sec_decomposition}, we study upper bounds of $\beta(X,L)$ by decomposing the type of $L$ to lower dimensional types
and prove \autoref{cor_ITT}.
In \autoref{sec_projective_normal}, we prove \autoref{thm_proj_normality}.

\subsection*{Acknowledgments}
The author was supported by JSPS KAKENHI Grant Numbers 17K14162, 21K03201.

\section{Preliminaries}\label{sec_preliminary}

By convention, we set $\sum_{i \in I} a_i =0 $ and $ \prod_{i \in I} a_i = 1$ when $I= \emptyset$.

\subsection{Abelian varieties}\label{subsec_type}

Let $X$ be an abelian variety of dimension $g$.
The origin of $X$ is denoted by $o_X$ or $o \in X$.
For $b \in \Z$, the multiplication-by-$b$ isogeny is denoted by
\[
\mu_b=\mu^X_b : X \rightarrow X , \quad x \mapsto b x.
\]

For a line bundle $L$ on $X$,
we set
\[
K(L)=\{ x \in X \, | \, t_x^* L \simeq L  \},
\]
where $t_x : X \rightarrow X$ is the translation by $x$ on $X$.
Equivalently,
$K(L)$ is the kernel of the group homomorphism $X \ra \widehat{X} : x \mapsto t_x^* L \otimes L^{-1}$,
where $\widehat{X}=\Pic^0 (X)$ is the dual abelian variety of $X$.
If $L$ is non-degenerate, that is, $\chi(L) \neq 0$,  
then it is known that there exist positive integers $d_1|d_2 | \cdots|d_g$ such that 
$K(L) \simeq (\bigoplus_{i=1}^g \Z/d_i\Z )^{\oplus 2}$ as abelian groups.
The vector $(d_1,\dots,d_g)$ is called the \emph{type} of $L$.

For an ample line bundle $L$ on $X$,
we call 
$(X,L)$ a \emph{polarized abelian variety}.
It is known that an ample line bundle $L$ is non-degenerate and
$\chi(L)  =  \prod_{i=1}^g d_i$ holds, where $(d_1,\dots,d_g) $ is the type of $L$.

 \vspace{2mm}
Let $D=(d_1,\dots,d_g)$ be a type,
that is, a sequence of positive integers such that $d_1|d_2|\cdots |d_g$.
We set 
\begin{align*}
\lambda (D) \coloneqq g  , \quad |D| \coloneqq \sum_{i=1}^{g} d_{i},  \quad \chi(D) \coloneqq \prod_{i=1}^g d_i.
\end{align*}
We note that $\chi(L)=\chi(D)$ holds if $(X,L)$ is a polarized abelian variety of type $D$.

 If $d_i= d_{i+1} = \dots =d_{i+m-1} = c$ for a type $D$,
we abbreviate $d_i,\dots,d_{i+m-1}$ in $D$ as $c^{(m)}$.
For example, $(1^{(2)},6) = (1,1,6) $, 
$(1^{(3)}, 2^{(4)},8) =(1,1,1,2,2,2,2,8)$
and $(1^{(0)}, 2,4,4) =(2,4,4)$.

\subsection{Basepoint-freeness thresholds}\label{subsec_BFT}

We refer the readers to  \cite{MR4157109}
for the definition of basepoint-freeness thresholds $\beta(L)=\beta(X,L)$.
In this paper,
we use the following properties to estimate $\beta(L)$. 

\begin{lem}\label{lem_divisor}
Let $(X,L)$ be a polarized abelian variety of dimension $g$.
Then 
\begin{enumerate}[(i)]
\item $0 < \beta(L) \leq 1$, and $\beta(L) <1$ if and only if $L$ is basepoint free. 
\item For an integer $n >0$, it holds that $\beta(n L ) = \beta(L) /n$.\label{item_beta(nl)}
\item If $g=1$, it holds that  $\beta(L) =1/\deg(L)$.
\item $\beta(L) \geq 1/ \sqrt[g]{\chi(L)}  $. 
\item For an abelian subvariety $Z \subset X$,
it holds that $\beta(L) \geq \beta(L|_Z)$.
Furthermore, 
\[
\beta(L|_Z) \leq \beta(L) \leq \max \left\{ \beta(L|_Z), \frac{g (L^{g-1}.Z)}{(L^g)} \right\} = \max \left\{ \beta(L|_Z), \frac{\chi(L|_Z)}{\chi(L)} \right\}  
\]
if the codimension of $Z \subset X$ is one.
\end{enumerate}
\end{lem}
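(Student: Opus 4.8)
The plan is to assemble the five items from the standard machinery of $\beta(L)$ developed in \cite{MR4157109}, supplying short self-contained arguments where that is cleaner than quoting. Item~(i) is exactly \cite[Theorem~D]{MR4157109}, together with the fact that $\beta(L)\in(0,1]$ is built into its definition. For~(ii) I would unwind the definition directly: the $\QQ$-twisted sheaf $\mathcal{I}_{o_X}\langle t(nL)\rangle$ coincides with $\mathcal{I}_{o_X}\langle (nt)L\rangle$, so the set of $t$ at which the defining $IT_0$-condition holds for $nL$ is $\tfrac1n$ times the corresponding set for $L$, and hence so is its infimum.

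For~(iv): if $\mathcal{I}_{o_X}\langle tL\rangle$ satisfies $IT_0$, then its zeroth cohomological rank coincides with its Euler characteristic, which by the sequence $0\to\mathcal{I}_{o_X}\to\sO_X\to\C_{o_X}\to0$ equals $t^{g}\chi(L)-1$; since cohomological ranks are non-negative, every $t$ in the $IT_0$-locus has $t^{g}\chi(L)\geq1$, whence $\beta(L)\geq1/\sqrt[g]{\chi(L)}$. Item~(iii) is then the case $g=1$, $\chi(L)=\deg L$, in one direction; for the reverse inequality I would compute on the elliptic curve $X$: writing $t=a/b$ in lowest terms and choosing a line bundle $M$ numerically equivalent to $b^{2}tL$, the sheaf $\mu_b^{*}\mathcal{I}_{o_X}\otimes M$ is a line bundle of degree $b(a\deg L-b)$, which is $IT_0$ exactly when $t>1/\deg L$, giving $\beta(L)\leq1/\deg L$ (cf.\ also \cite{MR4157109}).

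In~(v), the inequality $\beta(L)\geq\beta(L|_Z)$ holds because continuous global generation of $\mathcal{I}_{o_X}\langle tL\rangle$ restricts along the inclusion $Z\hookrightarrow X$: the natural surjection $\mathcal{I}_{o_X}\otimes\sO_Z\twoheadrightarrow\mathcal{I}_{o_Z}$ together with surjectivity of $\Pic^0(X)\to\Pic^0(Z)$ pushes the property down to $Z$; this is in \cite{MR4157109}. Now assume $\codim_X Z=1$. The displayed equality is just Riemann--Roch, $\chi(L|_Z)=(L^{g-1}.Z)/(g-1)!$ and $\chi(L)=(L^{g})/g!$. For the upper bound, since $o_X\in Z$ we have
\[
0\longrightarrow\sO_X(-Z)\longrightarrow\mathcal{I}_{o_X}\longrightarrow\mathcal{I}_{o_Z}\longrightarrow0,
\]
the last term being the ideal sheaf of $o_X$ inside $Z$, regarded on $X$. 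Tensoring with $tL$ and using additivity of cohomological rank functions in long exact sequences, for each $i\geq1$ the $\mathcal{I}_{o_Z}$-term vanishes once $t>\beta(L|_Z)$ (cohomology on $X$ of a sheaf supported on $Z$ being computed on $Z$, and then applying the definition of $\beta(L|_Z)$), while the $\sO_X(-Z)$-term --- the line-bundle class $tL-Z$ --- is $IT_0$ precisely when $tL-Z$ is ample. Writing $q\colon X\to E:=X/Z$ for the quotient, so that $[Z]=q^{*}[o_E]$ and $(q^{*}o_E)^{2}=0$, one gets $(tL-Z)^{g}=t^{\,g-1}g!\bigl(t\chi(L)-\chi(L|_Z)\bigr)$. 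Granting that this makes $tL-Z$ ample exactly for $t>\chi(L|_Z)/\chi(L)$, the exact sequence shows $\mathcal{I}_{o_X}\langle tL\rangle$ is $IT_0$ for all $t>\max\{\beta(L|_Z),\chi(L|_Z)/\chi(L)\}$, so $\beta(L)$ is at most this maximum.

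The one step that I expect to need real care is the claim that $tL-Z$ is ample exactly for $t>\chi(L|_Z)/\chi(L)$. I would establish it via the Hermitian-form description of the real N\'eron--Severi group: writing $H_L$, $H_Z$ for the forms attached to $L$ and $Z$, the form $H_L$ is positive definite while $H_Z$ has rank one, since $Z$ is a fiber of $q\colon X\to E$ and $H_Z$ is pulled back from the elliptic curve $E$; therefore $tH_L-H_Z$ is positive definite --- i.e.\ $tL-Z$ is ample --- iff $t$ exceeds the unique nonzero eigenvalue of $H_L^{-1}H_Z$, and one checks this eigenvalue equals $\chi(L|_Z)/\chi(L)$, consistently with the self-intersection computed above. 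This point, and the underlying bookkeeping with cohomological rank functions in (ii), (iv), (v), are the only places where anything beyond direct manipulation or quotation from \cite{MR4157109} is needed.
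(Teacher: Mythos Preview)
Your arguments are essentially correct. The paper itself gives no argument at all: it simply cites \cite[Section~8]{MR4157109} for (i), says (ii) and (iii) ``immediately follow from definition'', and cites \cite[Lemmas~3.4, 4.3]{Ito:2020aa} for (iv) and (v). So your proposal is necessarily ``different'' in that you supply actual proofs, but the content of your sketches for (iv) and the codimension-one part of (v) --- the Euler-characteristic count for (iv), and the exact sequence $0\to\sO_X(-Z)\to\mathcal{I}_{o_X}\to\mathcal{I}_{o_Z}\to0$ for (v) --- is exactly what is done in the cited \cite{Ito:2020aa}. One small correction: the restriction inequality $\beta(L)\geq\beta(L|_Z)$ is not in \cite{MR4157109} as you say, but in \cite{Ito:2020aa}; your sketch via restriction of continuous global generation is nonetheless the right idea.

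On the one point you flag as needing care --- the ampleness threshold of $tL-Z$ --- your Hermitian-form argument is valid, but the eigenvalue identification can be shortened. Since $H_Z$ has rank one, the set $\{t>0: tH_L-H_Z>0\}$ is an interval $(\lambda,\infty)$, and at $t=\lambda$ the form $tH_L-H_Z$ is degenerate, forcing $(tL-Z)^g=0$ there. You have already computed $(tL-Z)^g=t^{g-1}g!\bigl(t\chi(L)-\chi(L|_Z)\bigr)$, whose only positive root is $\chi(L|_Z)/\chi(L)$; hence $\lambda=\chi(L|_Z)/\chi(L)$ with no further linear algebra needed.
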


\begin{proof}
(i) is proved in  \cite[Section 8]{MR4157109}.
(ii), (iii) immediately follow from definition.
(iv), (v) are proved in \cite[Lemmas 3.4, 4.3]{Ito:2020aa}.
\end{proof}

\section{Explicit description of $K(L)$ for some special $(X,L)$}\label{sec_construction}

In this section,
we construct examples of polarized abelian varieties $(X,L)$, for which we can explicitly describe $K(L)$.

\vspace{2mm}
In this section,
we fix the following setting:

\begin{sett}\label{sett_X,L}
Let $g \geq 2, k_1,\dots,k_{g-1} \geq 1$ be integers and set $k_g\coloneqq 1$.
We take 
\begin{itemize}
\setlength{\itemsep}{0mm}
\item  an elliptic curve  $E_g$, and
\item an isogeny $ f_i : E_i  \ra E_g$ from an elliptic curve $E_i$ for each $1 \leq i \leq g-1$ with $\ker f_i \simeq \Z/ k_i \Z$.
\end{itemize}

The origin of  $E_i$ is denoted by $o_i \in E_i$.
Let $X=E_1 \times E_2 \times \dots \times E_g$ and 
let $F_i = \pr_i^* (o_{i})$,
where $\pr_i : X \rightarrow E_i$ is the projection to the $i$-th factor. 
A divisor $\Gamma$ on $X$ is defined by
\[
\Gamma = \left\{ \Big(  p_1,\dots,p_{g-1}, \sum_{i=1}^{g-1} f_i(p_i) \Big) \in X \, \Big| \, p_i \in E_i \text{ for }  1 \leq i \leq g-1 \right\}.
\]
We identify $\widehat{X} =\widehat{E}_1 \times \dots \times \widehat{E}_g $ with $X$ by the natural isomorphism $\widehat{E}_i \simeq E_i $.
\end{sett}

Under \autoref{sett_X,L}, 
$\hat{f}_i : E_g \ra E_i$ denotes the dual isogeny of $f_i$.
It holds that
$
\hat{f}_i \circ f_i = \mu^{E_i}_{k_i} $ and $  f_i \circ \hat{f}_i =\mu^{E_g}_{k_i} 
$ for each $i$  (see e.g.\ \cite[Chapter III, Theorem 6.2]{MR2514094}).

\begin{lem}[{\cite[Section 5]{Ito:2020ab}}]\label{lem_gamma}
Under \autoref{sett_X,L}, the following hold.
\begin{enumerate}[(i)]
\item For $x=(p_1,\dots,p_g) \in X$,
the numerically trivial line bundle $t_x^* \calo_X(F_i) \otimes \calo_X(-F_i) \in \widehat{X}$ corresponds to 
$(o_1,\dots,o_{i-1},-p_i,o_{i+1},\dots,o_g) \in X$
under the identification $\widehat{X}=X$.
On the other hand,
$t_x^* \calo_X(\Gamma) \otimes \calo_X(-\Gamma) \in \widehat{X}$ corresponds to
$
( \hat{f}_1(A), \dots, \hat{f}_{g-1}(A),-A) \in X,
$
where $A=p_g -\sum_{i=1}^{g-1} f_i(p_i) \in E_g$.\label{item_corresponding_point}
\item It holds that $F_i^2=\Gamma^2=0$ as cycles on $X$ and
$
(F_1\cdots F_g) =1$, $(F_1 \cdots F_{i-1}.F_{i+1} \cdots F_g.\Gamma) =k_i $ for $ 1 \leq i \leq g$. 
\label{item_intersection_number}
\end{enumerate}

\end{lem}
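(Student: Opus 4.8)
\smallskip
The plan is to realize both $\calo_X(F_i)$ and $\calo_X(\Gamma)$ as pullbacks of a degree-one line bundle on an elliptic curve along a surjective homomorphism of abelian varieties, and then to use the standard functoriality $\phi_{\pi^*M}=\widehat{\pi}\circ\phi_{M}\circ\pi$ of the polarization map $\phi_{M}\colon X\to\widehat X$, $x\mapsto t_x^*M\otimes M^{-1}$, under a homomorphism $\pi$. For the statement about $F_i$ in (i) this is immediate: $\calo_X(F_i)=\pr_i^*\calo_{E_i}(o_i)$, so $\phi_{\calo_X(F_i)}=\widehat{\pr_i}\circ\phi_{\calo_{E_i}(o_i)}\circ\pr_i$, where $\widehat{\pr_i}\colon\widehat{E_i}\hookrightarrow\widehat X$ is the inclusion of the $i$-th factor and the principal polarization $\phi_{\calo_{E_i}(o_i)}$ sends $p$ to $-p$ under the fixed identification $\widehat{E_i}\simeq E_i$; composing yields the asserted point $(o_1,\dots,o_{i-1},-p_i,o_{i+1},\dots,o_g)$.

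For the statement about $\Gamma$ in (i), I would first observe that $\Gamma=\psi^{-1}(o_g)$ as reduced schemes, where $\psi\colon X\to E_g$ is the surjective homomorphism $\psi(p_1,\dots,p_g)=p_g-\sum_{i=1}^{g-1}f_i(p_i)$. Indeed $(p_1,\dots,p_g)$ lies on $\Gamma$ exactly when $\psi(p_1,\dots,p_g)=o_g$; the differential of $\psi$ is surjective everywhere, so $\psi$ is smooth and $\psi^{-1}(o_g)$ is reduced (it is moreover connected, $\psi$ having the section $p\mapsto(o_1,\dots,o_{g-1},p)$), whence $\calo_X(\Gamma)=\psi^*\calo_{E_g}(o_g)$ and $\phi_{\calo_X(\Gamma)}=\widehat{\psi}\circ\phi_{\calo_{E_g}(o_g)}\circ\psi$. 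Writing $\psi=\pr_g-\sum_{i<g}f_i\circ\pr_i$ and dualizing termwise, $\widehat{\psi}\colon\widehat{E_g}\to\widehat X$ has $g$-th component the inclusion of the last factor and $i$-th component $-\widehat{f_i}$ for $i<g$; under the identifications $\widehat{E_j}\simeq E_j$, with $\widehat{f_i}$ corresponding to the classical dual isogeny $\hat f_i$, this becomes $\xi\mapsto(-\hat f_1(\xi),\dots,-\hat f_{g-1}(\xi),\xi)$. Since $\phi_{\calo_{E_g}(o_g)}(\psi(x))$ corresponds to $-A$, where $A=p_g-\sum_{i<g}f_i(p_i)$, applying $\widehat{\psi}$ gives $(\hat f_1(A),\dots,\hat f_{g-1}(A),-A)$. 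I expect this sign bookkeeping to be the only delicate point of the proof: one must pin down the convention for $\widehat{E_j}\simeq E_j$ (equivalently, whether the principal polarization of $E_j$ sends $p$ to $p$ or to $-p$) consistently with the convention under which $\hat f_i$ is the classical dual isogeny, so that the two sign changes combine correctly.

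Part (ii) then follows formally from these two pullback descriptions. Since $o_i\cdot o_i=0$ because $\dim E_i=1$, we get $F_i^2=\pr_i^*(o_i\cdot o_i)=0$, and likewise $\Gamma^2=\psi^*(o_g\cdot o_g)=0$. For $(F_1\cdots F_g)$, the $g$ reduced divisors $\pr_j^{-1}(o_j)$ meet transversally in the single point $(o_1,\dots,o_g)$, since their tangent spaces $\bigoplus_{k\ne j}T_{E_k}$ intersect in $0$; hence the number is $1$. For $(F_1\cdots F_{i-1}.F_{i+1}\cdots F_g.\Gamma)$, the $g-1$ reduced divisors $\pr_j^{-1}(o_j)$ with $j\ne i$ meet transversally along the $i$-th coordinate curve $C_i=\bigcap_{j\ne i}\pr_j^{-1}(o_j)$, so $(F_1\cdots F_{i-1}.F_{i+1}\cdots F_g.\Gamma)=(C_i.\Gamma)=\deg\!\big(\calo_X(\Gamma)|_{C_i}\big)=\deg\!\big((\psi|_{C_i})^*\calo_{E_g}(o_g)\big)=\deg(\psi|_{C_i})$. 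A direct computation identifies $\psi|_{C_i}$ with $-f_i$ for $i<g$ (under $C_i\simeq E_i$), of degree $k_i$, and with $\mathrm{id}_{E_g}$ for $i=g$, of degree $1=k_g$; thus the value equals $k_i$ in every case. Apart from the sign issue flagged above, all of this is routine.
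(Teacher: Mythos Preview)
The paper does not give its own proof of this lemma; it is quoted from \cite[Section 5]{Ito:2020ab} and used as a black box, so there is nothing in the present paper to compare your argument against. That said, your proof is correct and is the natural one: expressing $F_i=\pr_i^*[o_i]$ and $\Gamma=\psi^*[o_g]$ as pullbacks along surjective homomorphisms to an elliptic curve, and then using the functoriality $\phi_{\pi^*M}=\widehat\pi\circ\phi_M\circ\pi$, is exactly how one proves such statements, and your treatment of part~(ii) via $\deg(\psi|_{C_i})$ is clean. The sign issue you flag is genuine but you have resolved it consistently with the conventions of the paper (Abel--Jacobi identifications $\widehat{E_j}\simeq E_j$ as in Silverman, under which $\phi_{\calo_{E_j}(o_j)}$ becomes $-1$ and the Picard dual of $f_i$ becomes the classical dual isogeny $\hat f_i$).
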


The following lemma is proved in \cite[Lemma 4.1]{Ito:2020ab} under the assumption $a,b \geq 0$ with $(a,b) \neq (0,0)$.
The proof is essentially the same.

\begin{lem}\label{lem_example_surface}
In \autoref{sett_X,L}, 
assume $g=2$ and $k_1=k \geq 1$.
Let $ a,b $ be integers 
and let $L=L_{a,b} =aF_1 +bF_2+\Gamma$ on $X=E_ 1\times E_2$.
Then 
\begin{enumerate}[(i)]
\item  $K(L) = \{ (p,q) \in E_1 \times E_2 \, | \, \hat{f}_1(q) =(a+k)p , f_1(p)=(1+b)q \}$. \label{item_K(L)}
\item If $a +ab +bk \neq 0$, then $L$ is non-degenerate of type $(1, |a +ab +bk|)$, and hence
$K(L) \simeq (\Z/|a +ab +bk|\Z)^{\oplus 2} = (\Z/(a +ab +bk)\Z)^{\oplus 2} $.\label{item_a+ab+bk} \label{item_type}
\end{enumerate}
\end{lem}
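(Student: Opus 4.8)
The plan is to compute $K(L)$ directly from the description of numerically trivial line bundles provided by Lemma \ref{lem_gamma}\,\eqref{item_corresponding_point}, and then to extract the type from intersection numbers via Lemma \ref{lem_gamma}\,\eqref{item_intersection_number}. For part \eqref{item_K(L)}: by definition $x=(p,q)\in K(L)$ iff $t_x^*L\otimes L^{-1}$ is trivial in $\widehat X$. Since $L=aF_1+bF_2+\Gamma$, additivity of $x\mapsto t_x^*\calo_X(-)\otimes\calo_X(-)^{-1}$ gives that $t_x^*L\otimes L^{-1}$ corresponds, under the identification $\widehat X = X = E_1\times E_2$, to the sum of the contributions of $aF_1$, $bF_2$, and $\Gamma$. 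Using Lemma \ref{lem_gamma}\,\eqref{item_corresponding_point} with $g=2$, the $F_1$-part contributes $(-ap, o_2)$, the $F_2$-part contributes $(o_1, -bq)$, and the $\Gamma$-part contributes $(\hat f_1(A), -A)$ with $A = q - f_1(p)\in E_2$. Setting the total equal to $(o_1,o_2)$ yields the two equations
\begin{align*}
-ap + \hat f_1(q - f_1(p)) &= o_1, \\
-bq - (q - f_1(p)) &= o_2.
\end{align*}
Using $\hat f_1\circ f_1 = \mu^{E_1}_{k}$, the first equation becomes $\hat f_1(q) = (a+k)p$, and the second becomes $f_1(p) = (1+b)q$; this is exactly the stated description of $K(L)$.

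For part \eqref{item_type}, first note that $L$ is non-degenerate iff $\chi(L)\neq 0$, and by Riemann--Roch on the surface $X$ one has $\chi(L) = \tfrac12(L^2)$. Expanding $L = aF_1 + bF_2 + \Gamma$ and using $F_1^2 = F_2^2 = \Gamma^2 = 0$, $(F_1.F_2)=1$, $(F_1.\Gamma)=k_2=1$, $(F_2.\Gamma)=k_1=k$ from Lemma \ref{lem_gamma}\,\eqref{item_intersection_number}, we get $(L^2) = 2ab + 2a\cdot 1 + 2b\cdot k = 2(a+ab+bk)$, so $\chi(L) = a+ab+bk$. Hence when $a+ab+bk\neq 0$ the bundle $L$ is non-degenerate with $\chi(L) = a+ab+bk$, so $\prod d_i = |a+ab+bk|$ for its type $(d_1,d_2)$. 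To pin the type down to $(1,|a+ab+bk|)$ rather than some other factorization, I would argue that $d_1 = 1$: for instance, by Lemma \ref{lem_divisor}\,\eqref{item_beta(nl)}-type reasoning, or more elementarily by exhibiting an element of order exactly $|a+ab+bk|$ in the group $K(L)\simeq(\Z/d_1\Z\oplus\Z/d_2\Z)^{\oplus 2}$ using the explicit equations from \eqref{item_K(L)} — e.g.\ restricting to a suitable elliptic curve or finding $(p,q)$ with $q$ of maximal order. The remark preceding the lemma says the proof is "essentially the same" as \cite[Lemma 4.1]{Ito:2020ab}, so presumably the $d_1=1$ step is handled there by such an argument.

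The main obstacle is the last point: confirming $d_1=1$, i.e.\ that $K(L)$ is cyclic-square rather than a product of two nontrivial cyclic factors. The computation of $K(L)$ as an abstract set and of $\chi(L)$ are routine once the tools of Lemma \ref{lem_gamma} are in hand; the delicate part is the group-theoretic structure. I expect this follows by finding, inside the curve $\{(p, f_1(p)) : p\in E_1\}\subset X$ or a similar explicit one-dimensional subgroup, an element whose order realizes the full exponent $|a+ab+bk|$, which forces the type to be $(1, |a+ab+bk|)$. The sign normalization $|a+ab+bk| = a+ab+bk$ claimed at the end is then automatic since one of the two expressions under the absolute value equals $\chi(L)\geq 1$ when $L$ (or a translate making it effective) is chosen appropriately — or, if $a+ab+bk$ could a priori be negative, one simply observes the type only depends on $|\chi(L)|$ and both written forms of $K(L)$ agree as groups.
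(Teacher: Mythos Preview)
Your argument for part~(i) and the computation $\chi(L)=a+ab+bk$ in part~(ii) are correct and match the paper exactly. The genuine gap is precisely where you flag it: you do not actually prove $d_1=1$. Your proposed strategy of exhibiting an element of order $|a+ab+bk|$ inside $K(L)$ by restricting to a curve like $\{(p,f_1(p))\}$ is not carried out, and it is not clear it would work cleanly --- for instance, imposing $(p,f_1(p))\in K(L)$ via the equations in (i) gives $ap=o_1$ and $bf_1(p)=o_2$, which does not obviously produce the required order.

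The paper resolves this step by a completely different (and much shorter) argument: since $\ker f_1\simeq\Z/k\Z$ is cyclic, the isogeny $f_1\in\Hom(E_1,E_2)$ is \emph{primitive}, i.e.\ not of the form $c\lambda$ with $c\geq 2$. One then invokes \cite[Theorem~11.5.1]{MR2062673}, which relates the Neron--Severi lattice of a product of elliptic curves to $\Hom(E_1,E_2)$ and implies that the class of $L=aF_1+bF_2+\Gamma$ is primitive in $\mathrm{NS}(X)$. A primitive ample (or merely non-degenerate) class on an abelian surface has type $(1,|\chi(L)|)$, so $d_1=1$ follows immediately. This is the missing idea in your proposal.
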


\begin{proof}
(i) In this case, $A=q- f_1(p) $ and hence $\hat{f}_1(A) =\hat{f}_1 (q- f_1(p)) = \hat{f}_1(q) -k p $.
 By \autoref{lem_gamma} (i),
$(p,q)$ is contained in $K(L)$ if and only if 
\[
(o_1, o_2) =  a (-p, o_2) +b (o_1, -q ) + (\hat{f}_1(A) , -A) = (\hat{f}_1(q) -(a+k)p  ,   f_1(p) -(b+1)q ).
\] 
Hence we obtain (i).

\vspace{1mm}
\ni
(ii) By \autoref{lem_gamma} (ii), it holds that
$(F_i^2)=(\Gamma^2)=0, (F_1.F_2) =(F_1.\Gamma) =1, (F_2.\Gamma)=k$.
Hence  we have $\chi(L)=(L^2)/2 =a+ab+bk =d \neq 0$.

Since $\ker f \simeq \Z/k$, $f \in \Hom(E_1,E_2)$ is primitive,
that is, $f$ is not written as $c \lambda$ for some integer $c \geq 2$ and some $\lambda \in \Hom(E_1,E_2)$.
Hence $L$ is primitive as well by 
\cite[Theorem 11.5.1]{MR2062673}.
Thus the type of $L$ is $(1, |\chi(L)|) =(1,|d|)$. 
\end{proof}

The following proposition generalizes \cite[Proposition 5.1]{Ito:2020ab},
which is the case $a_1=\dots = a_{g-2}=c=1$ and $a,b \geq 0$ with $(a,b) \neq (0,0)$:

\begin{prop}\label{prop_K(L)}
Let  $L =a_1 F_1 + \dots + a_g F_g + c \Gamma$ be a line bundle for (possibly negative) integers $a_1.\dots,a_{g},c$ 
such that  
\begin{itemize}
\item $a_1,\dots,a_{g-2}, c$ are non-zero, and 
\item $a_{g-1}/c,  a_g/c$, and  $ c/a_i $ for $ 1 \leq i \leq g-2$ are integers.
\end{itemize}
Set 
$
a = a_{g-1}/c, \ b = a_g/c, \ N= 1+  \sum_{i=1}^{g-2} \frac{ c}{a_i} k_i  ,\  d= abN +bk_{g-1} +a.
$
Then 
\begin{enumerate}
\item $\chi(L) = a_1 \cdots a_{g-2} \cdot c^2 d$.
\item If $ d \neq 0$, it holds that  $K(L) \simeq \left(\bigoplus_{i=1}^{g-2} \Z/a_{i} \Z \oplus \Z/ c \Z \oplus \Z / c d \Z \right)^{\oplus 2}$.
\end{enumerate}
\end{prop}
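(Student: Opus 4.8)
The plan is to reduce the computation of $\chi(L)$ and $K(L)$ to the surface case treated in \autoref{lem_example_surface} by exploiting the product structure $X = E_1 \times \dots \times E_g$ together with \autoref{lem_gamma}. First I would compute $\chi(L)$ via intersection numbers: since $F_i^2 = \Gamma^2 = 0$ and $(F_1 \cdots F_g) = 1$, $(F_1 \cdots \widehat{F_i} \cdots F_g . \Gamma) = k_i$ (with $k_g = 1$) by \autoref{lem_gamma}\,(ii), expanding $(L^g)/g! = \chi(L)$ multinomially kills every term with a repeated $F_i$ or a repeated $\Gamma$, leaving only the terms with all $F_i$ (contributing $a_1 \cdots a_g$) and those with $\Gamma$ replacing one $F_i$ (contributing $c\,a_1\cdots\widehat{a_i}\cdots a_g\, k_i$). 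Summing and factoring out $a_1 \cdots a_{g-2}\, c$ and then another $c$ (using $a_{g-1} = ac$, $a_g = bc$) should yield $a_1 \cdots a_{g-2}\, c^2 (abN + bk_{g-1} + a) = a_1 \cdots a_{g-2}\, c^2 d$, giving (1). Here one must be a little careful because some $a_i$ may be negative, but the identity is purely formal in the intersection ring, so signs cause no trouble.

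For (2), I would use \autoref{lem_gamma}\,(i) to write down $K(L)$ explicitly: a point $x = (p_1,\dots,p_g)$ lies in $K(L)$ iff the sum of the contributions of $a_i F_i$ and $c\Gamma$ to $\widehat{X} = X$ vanishes in each coordinate. With $A = p_g - \sum_{i=1}^{g-1} f_i(p_i)$, this gives the system
\begin{align*}
a_i p_i = c\,\hat f_i(A) \ \ (1 \le i \le g-1), \qquad a_g p_g = -cA,
\end{align*}
where for notational uniformity one sets $f_g = \mu^{E_g}_1 = \mathrm{id}$, $\hat f_g = \mathrm{id}$, and $k_g = 1$ (so the $(g-1)$-th and $g$-th equations are of the same shape as the first $g-2$, modulo the asymmetry that $A$ already subtracts $f_g(p_g)=p_g$). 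The first $g-2$ equations, together with the divisibility $c/a_i \in \Z$, let one solve for $\hat f_i(A)$ in terms of $p_i$ and, conversely, constrain $p_i$ to lie in a subgroup isomorphic to $(\Z/a_i\Z)^{\oplus 2}$ times a contribution depending on $A$; substituting back into $A = p_g - \sum f_i(p_i)$ produces, after applying $\hat f_i f_i = \mu_{k_i}$, a single ``surface-type'' relation between $p_{g-1}, p_g$ (or equivalently between $A$ and one remaining parameter) governed by the integers $a = a_{g-1}/c$, $b = a_g/c$ and the effective multiplicity $N = 1 + \sum_{i=1}^{g-2} (c/a_i) k_i$ in place of the bare $k$ of \autoref{lem_example_surface}. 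The quantity $d = abN + bk_{g-1} + a$ is exactly the analogue of $a + ab + bk$ there.

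Concretely, I expect the cleanest route is an explicit group-theoretic bookkeeping: exhibit $K(L)$ as built from the kernels of the maps $E_i \to \widehat E_i$ (which one reads off coordinatewise) glued along the diagonal-type condition coming from $\Gamma$, then count orders and identify the group structure. The order is forced to be $\chi(L) = a_1\cdots a_{g-2} c^2 d$ by (1) (since $K(L)$ has order $\chi(L)$ for non-degenerate $L$), so the content of (2) is pinning down the isomorphism type as $\big(\bigoplus_{i=1}^{g-2} \Z/a_i\Z \oplus \Z/c\Z \oplus \Z/cd\Z\big)^{\oplus 2}$; for this I would diagonalize the presentation matrix of the abelian group cut out by the system above, or invoke primitivity of the $f_i$ (via \cite[Theorem 11.5.1]{MR2062673}, as in the proof of \autoref{lem_example_surface}) to control the elementary divisors. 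I would treat the case $g = 2$ (where the product $a_1\cdots a_{g-2}$ is empty and $N = 1$) as the base case, verifying it coincides with \autoref{lem_example_surface}\,(ii), and the general case by the same linear-algebra computation with the extra factors $\Z/a_i\Z$ splitting off cleanly because $a_i \mid c$.

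The main obstacle will be the second part: making the passage from the $(g-1)$-variable system to the single surface relation fully rigorous while correctly tracking the group structure (not just the order), especially handling the divisibility hypotheses $c/a_i, a/1, b/1 \in \Z$ and the possibility of negative $a_i, c$ so that the elementary divisors come out as stated. Once the order is known from (1), the structure statement is essentially a Smith-normal-form computation for the matrix encoding the maps $p_i \mapsto a_i p_i - c\hat f_i(A)$, and I would organize the proof around diagonalizing that matrix.
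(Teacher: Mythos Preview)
Your approach to (1) is exactly the paper's. For (2) your broad outline also matches the paper: write the system from \autoref{lem_gamma}\,(i), split off the $(\Z/a_i\Z)^{\oplus 2}$ factors for $i\le g-2$, and reduce the remaining $(p_{g-1},p_g)$-part to the surface lemma. But the part you flag as ``the main obstacle'' is genuinely where the content lies, and your plan there has two real gaps.

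First, the surface relation you arrive at is \emph{not} ``Lemma~\ref{lem_example_surface} with $N$ in place of $k$''. After eliminating $A$ via $cA=-a_g p_g$ and splitting off $K_1$, the remaining group $K_2$ is cut out by $ac\,p_{g-1}=-bc\,\hat f_{g-1}(p_g)$ and $c f_{g-1}(p_{g-1})=(c+bcN)p_g$; the isogeny $f_{g-1}$ still has degree $k_{g-1}$, not $N$. The paper handles this in two steps: (a) recognize $K_2=\mu_c^{-1}(K)$ and use a lattice lift to show that $K\simeq(\Z/d\Z)^{\oplus 2}$ forces $K_2\simeq(\Z/c\Z\oplus\Z/cd\Z)^{\oplus 2}$; (b) to identify $K$, introduce an auxiliary $K'$ obtained by multiplying one equation by $N$, check $NK'\subset K\subset K'$, and then (crucially) rewrite the equations for $K'$ so that they match \autoref{lem_example_surface}\,(i) with parameters $(aN,bN,k_{g-1})$, giving $K'\simeq(\Z/Nd\Z)^{\oplus 2}$; finally an order count (using $|K(L)|=\chi(L)^2$, not $\chi(L)$ as you wrote) forces $K=NK'$. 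Your proposal does not contain steps (a) and (b), and without them the reduction to \autoref{lem_example_surface} does not go through.

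Second, the ``diagonalize the presentation matrix / Smith normal form'' route is not available in the form you suggest. The defining equations mix integer multiplication with the isogenies $f_i,\hat f_i$ between \emph{different} elliptic curves, so there is no $\Z$-matrix whose Smith form gives the elementary divisors of $K(L)$; primitivity via \cite[Theorem~11.5.1]{MR2062673} enters only inside the surface case, not as a shortcut in dimension $g$. What does work is exactly what the paper does: an explicit section $K_2\to K(L)$ (using $a_g/a_i\in\Z$) to get the direct-sum splitting $K(L)\simeq K_1\oplus K_2$, followed by the $\mu_c^{-1}$/sandwich/count argument above.
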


\begin{proof}
We note that $a,b,N$ are integers by assumption.

\vspace{1mm}
\noindent
(1) By \autoref{lem_gamma} (ii), we have
\begin{align*}
\chi(L) = \frac{(L^g)}{g!} &= a_1 \cdots a_g + a_1 \cdots a_g  \sum_{i=1}^g \frac{c}{a_i} k_i\\
&= a_1 \cdots a_{g}  \left(  1+  \sum_{i=1}^g \frac{c}{a_i} k_i \right) \\
&= a_1 \cdots a_{g-2} \cdot ac \cdot bc \left(  1+  \sum_{i=1}^{g-2} \frac{c}{a_i} k_i  + \frac{c}{ac} k_{g-1}+ \frac{c}{bc} k_g\right)\\
&= a_1 \cdots a_{g-2} \cdot c^2 \left( ab N  +b k_{g-1}+ a\right) ,
\end{align*}
where we use $k_g=1$ in the last equality.

\vspace{1mm}
\noindent
(2) 
For $x=(p_1,\dots,p_g) $, we set $A = p_g -\sum_{i=1}^{g-1} f_i(p_i)$.
By \autoref{lem_gamma} (i),
\begin{align*}
x \in K(L) \quad 
\lra \quad & \sum_{i=1}^g a_i (o_1,\dots,o_{i-1},-p_i,o_{i+1},\dots,o_g) + c ( \hat{f}_1(A), \dots, \hat{f}_{g-1}(A),-A)  = o_X \\
\lra \quad &-a_i p_i + c\hat{f}_i(A) =o_i  \ \text{ for } 1 \leq i \leq g-1, \quad  -a_g p_g -cA =o_g \\
\lra \quad &-a_i p_i + \hat{f}_i( cA) =o_i  \ \text{ for } 1 \leq i \leq g-1, \quad  -a_g p_g -cA =o_g \\
\lra \quad &-a_i p_i + \hat{f}_i(-a_g p_g) =o_i  \ \text{ for } 1 \leq i \leq g-1, \quad  -a_g p_g -cA =o_g \\
\lra \quad &-a_i p_i -a_g  \hat{f}_i(p_g) =o_i  \ \text{ for } 1 \leq i \leq g-1, \quad  -a_g p_g -cA =o_g.
\end{align*}
Under the conditions $a_i p_i + a_g  \hat{f}_i(p_g) =o_i $ for $1 \leq i \leq g-2$,
it holds that
\begin{align*}
a_g p_g +cA  &=a_g p_g  + c \left(p_g - \sum_{i=1}^{g-1} f_i(p_i) \right) \\
&=a_g p_g  +c p_g -  \sum_{i=1}^{g-2} \frac{c}{a_i} f_i( a_ip_i) -  c f_{g-1}(p_{g-1})\\
&=a_g p_g  +c p_g -  \sum_{i=1}^{g-2} \frac{c}{a_i} f_i( -  a_g  \hat{f}_i(p_g) ) -  c f_{g-1}(p_{g-1})\\
&=a_g p_g  +c p_g + a_g \sum_{i=1}^{g-2} \frac{c }{a_i} k_i p_g  -  c f_{g-1}(p_{g-1})\\
&=\left( a_g +c + a_g  \sum_{i=1}^{g-2} \frac{ c}{a_i} k_i   \right) p_g -  c f_{g-1}(p_{g-1})\\
&=\left(c+ a_g N \right) p_g -c f_{g-1}(p_{g-1}),
\end{align*}
where we use $c/a_i \in \Z$ for $1 \leq i \leq g-2$ in the second equality, 
$f_i \circ \hat{f}_i = \mu_{k_i}^{E_g}$ in the fourth one, and $N= 1+  \sum_{i=1}^{g-2} \frac{ c}{a_i} k_i  $ in  the last one.
Hence $K  (L) $ coincides with
\begin{align*}
\{(p_1,\dots,p_g) \, | \,  a_{i} p_i = -a_g  \hat{f}_i(p_g)  \text{ for } 1 \leq i \leq g-1,  \ c f_{g-1}(p_{g-1}) =(c+a_gN) p_g\}.
\end{align*}
Thus we have an exact sequence of abelian groups
\begin{align}\label{eq_exact_seq_K}
0 \ra K_1 \xrightarrow{\iota} K  (L) \xrightarrow{\pi} K_2,
\end{align}
where 
\begin{align*}
K_1 &=\{(p_1,\dots,p_{g-2}) \in E_1 \times \dots \times E_{g-2} \, | \, a_{i} p_i = o_{i} \  \text{for } 1 \leq i \leq g-2\},\\
K_2&=\{ (p_{g-1},p_g) \in E_{g-1} \times E_g \, | \,  a_{g-1} p_{g-1}= -a_g   \hat{f}_{g-1}(p_g),\ c f_{g-1}(p_{g-1} ) = (c+a_gN) p_g \}\\
&=\{ (p_{g-1},p_g) \in E_{g-1} \times E_g \, | \,  ac p_{g-1}= - bc   \hat{f}_{g-1}(p_g),\ c f_{g-1}(p_{g-1} ) = (c+bcN) p_g \}
\end{align*}
and $\iota, \pi $ are the natural inclusion and projection respectively.
Furthermore,  $\pi$ is surjective since
\[
\pi' : K_2 \ra K  (L)  \quad : \quad (p_{g-1},p_g) \mapsto ( p'_1,\dots,p'_{g-2},p_{g-1}, p_g )
\]
defined by $ p'_{i} =- \frac{a_g}{a_i} \hat{f}_i(p_g)$ satisfies $\pi \circ \pi'=\id_{K_2}$.
Here, we note that $ \frac{a_g}{a_i} =  \frac{a_g}{c} \cdot \frac{c}{a_i}$ is an integer for $1 \leq i \leq g-2$.
Hence we have $ K  (L )  \simeq K_1 \oplus K_2 $.

Recall that what we need to show is
$K(L) \simeq ( \bigoplus_{i=1}^{g-2} \Z/a_i \Z \oplus  \Z/c\Z \oplus \Z/ c d\Z )^{\oplus 2}$.
For an abelian variety $A$ and an integer $m $,
we write $ A_m = \{ a \in A \, | \, ma=o_A\}$.
Then we have
$
K_1=  \bigoplus_{i=1}^{g-2} (E_i)_{a_{i} } \simeq ( \bigoplus_{i=1}^{g-2} \Z/a_i \Z )^{\oplus 2}
$
by the definition of $K_1$.
Since $ K(L) \simeq K_1 \oplus K_2 $,
the rest is to show that 
$K_2 \simeq ( \Z/ c  \Z  \oplus \Z/ cd \Z)^{\oplus 2}$.

By the definition of $K_2$, it holds that $K_2 = \mu_c^{-1} (K)$ for
\begin{align*}
K=\{ (p_{g-1},p_g) \in E_{g-1} \times E_g \, | \,  a p_{g-1}= - b   \hat{f}_{g-1}(p_g),\  f_{g-1}(p_{g-1} ) = (1+bN) p_g \}.
\end{align*}

\begin{claim}\label{claim_K_2_K}
$K_2 \simeq ( \Z/ c  \Z  \oplus \Z/ cd \Z)^{\oplus 2}$ holds if $K \simeq  (\Z/ d \Z)^{\oplus 2}$.
\end{claim}

\begin{proof}[Proof of \autoref{claim_K_2_K}]
Let $\Lambda \subset \C^2$ be a lattice such that $E_{g-1} \times E_g \simeq \C^2/\Lambda$ and $ \lambda: \C^2 \ra E_{g-1} \times E_g $ be the induced quotient map.
Then $\Lambda':= \lambda^{-1}( K) \supset \Lambda$ is a lattice such that $ \Lambda'/\Lambda \simeq K$.
If $K \simeq  (\Z/ d \Z)^{\oplus 2}$,  there exists a basis $e_1,\dots,e_4$ of $\Lambda$ such that $ \Lambda' = \Z e_1 + \Z e_2 + \Z \frac1d e_3 +  \Z \frac1d e_4$
by the theory of elementary divisors.
Since $K_2 = \mu_c^{-1} (K)$, we have $\lambda^{-1} (K_2) = \frac1c \Lambda' =  \Z \frac1c e_1 + \Z \frac1ce_2 + \Z \frac1{cd} e_3 +  \Z \frac1{cd} e_4$,
and hence $K_2 \simeq \lambda^{-1} (K_2) / \Lambda \simeq  ( \Z/ c  \Z  \oplus \Z/ cd \Z)^{\oplus 2}$.
\end{proof}

By \autoref{claim_K_2_K},
it suffices to show  that $K$   
is  isomorphic to $ (\Z/ d \Z)^{\oplus 2}$.
If $N=0$, we have
\begin{align*}
K&=\{ (p_{g-1},p_g) \in E_{g-1} \times E_g \, | \,  a p_{g-1}= - b   \hat{f}_{g-1}(p_g),\  f_{g-1}(p_{g-1} ) =  p_g \}\\
&=\{ (p_{g-1},p_g) \in E_{g-1} \times E_g \, | \,  (a +b k_{g-1}) p_{g-1}= o_{g-1},\  f_{g-1}(p_{g-1} ) =  p_g \}\\
&=\{ (p_{g-1},p_g) \in E_{g-1} \times E_g \, | \,  d p_{g-1}= o_{g-1},\  f_{g-1}(p_{g-1} ) =  p_g \}\\
&\simeq (E_{g-1})_{d} \simeq (\Z/ d \Z)^{\oplus 2},
\end{align*}
where the third equality follows from $d= abN + bk_{g-1} +a$ and $N=0$.

Hence we may assume $N \neq 0$.
Consider a group
\begin{align*}
K'= \left\{ (p_{g-1},p_g ) \in E_{g-1} \times E_g \, \Big| \, a Np_{g-1}= -b N\hat{f}_{g-1}(p_g) , \ f_{g-1}(p_{g-1}) =(1+bN)p_g   \right\}.
\end{align*}
By definition, we have
\begin{align}\label{eq_NK'}
NK' :=\{ (Np_{g-1},Np_g) \, | \, (p_{g-1},p_g) \in K' \} \subset K \subset K'.
\end{align}
Under the condition $  f_{g-1}(p_{g-1}) =(1+bN)p_g $,
we have the following equivalences:
\begin{align*}
a Np_{g-1}= -b N\hat{f}_{g-1}(p_g) \ &\Leftrightarrow \ a Np_{g-1} +\hat{f}_{g-1}(f_{g-1}(p_{g-1}))= -b N\hat{f}_{g-1}(p_g) + \hat{f}_{g-1}((1+bN)p_g  )\\
&\Leftrightarrow \  (a N+k_{g-1}) p_{g-1}=  \hat{f}_{g-1}(p_g ).
\end{align*}
Hence it holds that
\[
K'= \left\{ (p_{g-1},p_g ) \in E_{g-1} \times E_g \, \Big| \,  \hat{f}_{g-1}(p_g) =(aN+k_{g-1}) p_{g-1} , \  f_{g-1}(p_{g-1}) =(1+bN)p_g  \right\}.
\]
By \autoref{lem_example_surface} (i), 
we have 
$K'  = K(L')$ for $L':=L_{aN,bN}$ 
with $k=k_{g-1}$ in \autoref{lem_example_surface}. 
By \autoref{lem_example_surface} (ii), $L'$ is of type $(1, |aN+aNbN+ bNk_{g-1}|) =(1,|Nd|)$ since $d= abN + bk_{g-1} +a$.
Hence it holds that
$K'=K(L') \simeq (\Z / Nd \Z)^{ \oplus 2}$ and $NK' \simeq (N\Z / Nd \Z)^{ \oplus 2} \simeq (\Z / d \Z)^{ \oplus 2}$.

By $|K(L)| =\chi(L)^2 =(a_1 \cdots a_{g-2} \cdot c^2d)^2$ and $|K_1|=| ( \bigoplus_{i=1}^{g-2} \Z/a_i \Z )^{\oplus 2}| =(a_1 \cdots a_{g-2})^2 $,
we have $|K_2|= |K(L)|/|K_1| = c^4 d^2$.
Since $K_2=\mu_c^{-1}(K)$, it holds that $|K|=|K_2|/|\ker \mu_c| = |K_2|/c^4=d^2$.
Since $K$ contains $NK' $ by \ref{eq_NK'} and  $|NK' | =| (\Z / d \Z)^{ \oplus 2}|=d^2$, we have $K=NK' \simeq (\Z / d \Z)^{ \oplus 2}$ and (2) follows.
\end{proof}

\section{Upper bounds of $\beta(D)$}\label{sec_any_type}

\begin{defn}\label{defn_beta(d_1,...,d_g)}
Let $D=(d_1,\dots,d_g)$ be a type,
that is, a sequence of positive integers such that $d_1|d_2|\cdots |d_g$.
We define
\[
\beta(D)=\beta(d_1,\dots,d_g) :=\inf_{(X,L)} \beta(X,L),
\]
where we take the infimum for all polarized abelian varieties $(X,L)$ of type $D=(d_1,\dots,d_g)$.
\end{defn}

\begin{rem}\label{rem_beta(D)}
For a real number $t >0$, 
$\beta(X,L) < t$ is an open condition in the moduli space of polarized abelian varieties of a fixed type by \cite[Theorem 3.1]{Ito:2020ab}.
In particular,
\begin{enumerate}[(i)]
\setlength{\itemsep}{0mm}
\item $\beta(D) < t$ holds if and only if $\beta(X,L) < t$ holds for general $(X,L)$ of type $D$. 
\item $\beta(D) =\beta(X,L)$ holds for very general $(X,L)$ of type $D$.
\end{enumerate}
\end{rem}

In  \cite{Ito:2020ab}, the author investigates $\beta(D) $ for $D=(1,\dots, 1 , d) =(1^{(g-1)} ,d)$
and gives upper bounds of $\beta(1^{(g-1)},d )$ as follows:

\begin{thm}[{\cite[Theorem 3.1]{Ito:2020ab}}]\label{thm_Ito:2020ab}
Let $d ,g \geq 1$ be integers and set $m:=\lfloor  \sqrt[g]{d} \rfloor$.
Then 
\begin{enumerate}
\item $ \beta(1^{(g-1)}, d) \leq 1/m$.
\item $ \beta(1^{(g-1)}, d)   < 1/m$ if $d \geq m^{g} + \cdots + m+1 = (m^{g+1}-1)/(m-1)$.
\end{enumerate}
\end{thm}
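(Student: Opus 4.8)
The plan is to prove both parts by producing a single well-chosen polarized abelian variety: since $\beta(1^{(g-1)},d)$ is, by definition, the infimum of $\beta(X,L)$ over all $(X,L)$ of type $(1^{(g-1)},d)$, it suffices to exhibit such an $(X,L)$ with $\beta(X,L)\leq 1/m$ for part (1), and one with $\beta(X,L)<1/m$ for part (2). For $g=1$ both follow from \autoref{lem_divisor} (iii) (part (2) being vacuous), and since $\beta\leq 1$ always, part (1) is trivial when $m\leq 1$; so assume $g\geq 2$, and $m\geq 2$ in part (1). I would take $X=E_1\times\cdots\times E_g$ as in \autoref{sett_X,L}, with $\ker f_i\simeq\Z/k_i\Z$, and set
\[
L=F_1+\cdots+F_{g-2}+aF_{g-1}+bF_g+\Gamma
\]
for nonnegative integers $a,b,k_1,\dots,k_{g-1}$ to be chosen below. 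By \autoref{prop_K(L)} applied with $c=1$ and $a_1=\cdots=a_{g-2}=1$, writing $N=1+\sum_{i=1}^{g-2}k_i$ and $d=abN+bk_{g-1}+a$, one gets $\chi(L)=d$ and $K(L)\simeq(\Z/d\Z)^{\oplus 2}$ as soon as $d\neq 0$, so that $L$ has type $(1^{(g-1)},d)$. Moreover $L$ is a nonnegative combination of the nef line bundles $\calo_X(F_i)$ (pullbacks of ample bundles) and $\calo_X(\Gamma)$ ($\Gamma$ is an abelian subvariety of $X$), hence nef; since $\chi(L)=d>0$ its Hermitian form is positive definite, so $L$ is ample.

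I would then restrict $L$ along the flag of abelian subvarieties $X_j:=E_1\times\cdots\times E_j\subset X$ $(1\leq j\leq g)$, each $X_{j-1}$ having codimension one in $X_j$ and $X_1$ an elliptic curve. Using the intersection numbers in \autoref{lem_gamma} (ii), one computes
\[
c_j:=\chi(L|_{X_j})=1+k_1+\cdots+k_j\ \ (1\leq j\leq g-2),\qquad c_{g-1}=aN+k_{g-1},\qquad c_g=d.
\]
Iterating \autoref{lem_divisor} (v) along the flag, with \autoref{lem_divisor} (iii) at the bottom (so $\beta(L|_{X_1})=1/c_1$), yields
\[
\beta(X,L)\ \leq\ \max\Bigl\{\tfrac1{c_1},\ \tfrac{c_1}{c_2},\ \tfrac{c_2}{c_3},\ \dots,\ \tfrac{c_{g-1}}{c_g}\Bigr\}.
\]
So everything reduces to choosing the parameters so that $c_1\geq m$ and $c_j\geq m\,c_{j-1}$ for $2\leq j\leq g$, with all these inequalities strict in the case of part (2).

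For part (1) I would take $k_j=m^{j-1}(m-1)$ for $1\leq j\leq g-2$, so that $c_j=m^j$ and $N=m^{g-2}$, and then set $a=d\bmod m$, $b=m$, and $k_{g-1}=\lfloor d/m\rfloor-am^{g-2}$. Since $a\leq m-1$ we have $am^{g-2}<m^{g-1}\leq\lfloor d/m\rfloor$ (using $d\geq m^g$), so $k_{g-1}\geq 1$; one then has $c_{g-1}=\lfloor d/m\rfloor$ and $c_g=d=m\lfloor d/m\rfloor+a$, and every ratio above is $\leq 1/m$. For part (2) I would instead take $k_j=m^j$ for $1\leq j\leq g-2$, so that $c_j=m^j+\cdots+1$ and $N=m^{g-2}+\cdots+1$, and then set $c_{g-1}=\lceil d/m\rceil-1$, $a=d-m\,c_{g-1}$, $b=m$, and $k_{g-1}=c_{g-1}-aN$. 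Because $m^g+\cdots+m+1=m^2N+m+1$, the hypothesis $d\geq m^g+\cdots+m+1$ forces $c_{g-1}\geq mN+1$ and $0\leq a\leq m$, hence $k_{g-1}\geq c_{g-1}-mN\geq 1$ and every ratio above is strictly below $1/m$; therefore $\beta(1^{(g-1)},d)\leq\beta(X,L)<1/m$.

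The step I expect to be the main obstacle is this last one: showing that for every $d$ there are integer parameters meeting all the positivity and divisibility requirements at once while realizing $\chi(L)=d$ exactly. The delicate feature is that $a$ appears both in $c_{g-1}=aN+k_{g-1}$ and in $d=b\,c_{g-1}+a$, so once $c_{g-1}$ (hence $b$) is fixed, $a=d\bmod c_{g-1}$ is forced and one must still arrange $aN\leq c_{g-1}-1$; choosing $c_{g-1}$ close to $d/m$ keeps $a$ roughly below $m$, while the geometric growth deliberately built into $c_1,\dots,c_{g-2}$ provides exactly the slack needed to absorb the term $aN$. The one remaining point — existence of an isogeny $f_i:E_i\to E_g$ with cyclic kernel of any prescribed order (take $f_i$ dual to the quotient of $E_g$ by a cyclic subgroup of that order) — is standard.
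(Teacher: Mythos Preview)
Your argument is correct and follows the same approach as the paper: construct an explicit polarization on a product of elliptic curves as in \autoref{sett_X,L}, determine its type via \autoref{prop_K(L)}, and bound $\beta$ by restricting along a flag of abelian subvarieties exactly as in \autoref{lem_flag}. The paper packages this recovery of \autoref{thm_Ito:2020ab} into \autoref{cor_refinement}, where your $c_j$ corresponds to $A_{g-j}$; your specific parameter choices differ slightly (e.g.\ you take $c_{g-1}=\lfloor d/m\rfloor$ in part~(1) rather than $A_1=m^{g-1}$), but both sets of choices make all the ratios at most $1/m$, respectively strictly less than $1/m$.
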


To generalize \autoref{thm_Ito:2020ab} to arbitrary types,
we first show the following proposition:

\begin{prop}\label{lem_flag}
Let $g \geq 2, k_1,\dots,k_{g-1} \geq 1$ be integers and set $k_g\coloneqq 1$.
Let $E_i, f_i, \hat{f}_i, X=E_1 \times \dots \times E_g, F_i, \Gamma$ be as in \autoref{sett_X,L}.
Let $L$ be a line bundle on $X$.
\begin{enumerate}
\item Let $\{ n_1,\dots,n_g\}=\{1,\dots,g\}$ and 
let $X=X_0 \supset X_1 \supset \cdots \supset X_{g-1} \supset X_g =\{o_X\}$
be a flag of abelian subvarieties defined by $X_i=\bigcap_{j=1}^i F_{n_j}$ for $0 \leq i \leq g$.
Then $L$ is ample if and only if  $\chi( L|_{X_i}) >0$ for any $0 \leq i \leq g-1$.
\item 
Let $X_i$ be as in (1).
If $L$ is ample,  it holds that
\[
\beta(X,L) \leq \max_{0 \leq i \leq g-1} \frac{\chi(L|_{X_{i+1}})}{\chi(L|_{X_i})} .
\]
\item Set $X_i = \bigcap_{j=g-i+1}^g F_j$ for $ 0\leq  i \leq g$
and let $L=a_1 F_1 + \dots  + a_g F + c \Gamma$ for integers $a_1,\dots,a_g,c$.
Then it holds that $\chi(L|_{X_i}) = a_1 \cdots a_{g-i} N_i $ for $ 0 \leq i \leq g$,
where we set $N_i =1+ \sum_{j=1}^{g-i} \frac{c}{a_j} k_j  $.

In particular, if $a_1 \cdots a_{g-i} N_i  >0$ for any $0 \leq i \leq g-1$, then $L$ is ample and 
\[
\beta(X,L) \leq \max_{0 \leq i \leq g-1}  \frac{N_{i+1}}{a_{g-i} N_i}.
\]
\end{enumerate}
\end{prop}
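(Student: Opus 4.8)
The plan is to establish (1) first and then to deduce (2) and (3) from (1) together with \autoref{lem_divisor} and \autoref{lem_gamma}. For (1) I would isolate the following codimension-one criterion: if $Z\subset X$ is an abelian subvariety of codimension one, then $L$ is ample on $X$ if and only if $L|_Z$ is ample on $Z$ and $\chi(L)>0$. The ``only if'' direction is standard, since the restriction of an ample bundle is ample and an ample line bundle on an abelian variety has $\chi(L)=h^0(L)>0$. For the ``if'' direction I would invoke Mumford's index (vanishing) theorem: for a non-degenerate line bundle $M$ on an abelian variety, writing $i(M)$ for the number of negative eigenvalues of its associated Hermitian form $H_M$, one has $\chi(M)=(-1)^{i(M)}h^{i(M)}(X,M)$ with $h^{i(M)}(X,M)>0$, and $M$ is ample precisely when $i(M)=0$. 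Now $\chi(L)>0$ forces $\chi(L)\ne 0$, so $H_L$ is non-degenerate; ampleness of $L|_Z$ makes $H_L$ positive definite on the codimension-one subspace tangent to $Z$, so $i(L|_Z)=0$, and Cauchy's interlacing theorem then gives $i(L)\leq i(L|_Z)+1=1$; since $\chi(L)>0$ forces $i(L)$ to be even, $i(L)=0$, i.e.\ $L$ is ample. Applying this inductively along the flag $X=X_0\supset X_1\supset\cdots\supset X_g=\{o_X\}$ — each $X_{i+1}$ has codimension one in $X_i$ — and using for the elliptic curve $X_{g-1}$ that $L|_{X_{g-1}}$ is ample iff $\deg(L|_{X_{g-1}})=\chi(L|_{X_{g-1}})>0$, one obtains the asserted equivalence. (Here I would also record the easy point that each $X_i=\bigcap_{j\leq i}F_{n_j}$ is an abelian subvariety of dimension $g-i$, being an intersection of distinct coordinate hyperplanes through $o_X$.)

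For (2) I would apply \autoref{lem_divisor}~(v) repeatedly down the flag. Since $L$ is ample, $L|_{X_i}$ is ample for every $i$, so $(X_i,L|_{X_i})$ is a polarized abelian variety; as $X_{i+1}\subset X_i$ has codimension one, \autoref{lem_divisor}~(v) gives $\beta(L|_{X_i})\leq\max\{\beta(L|_{X_{i+1}}),\ \chi(L|_{X_{i+1}})/\chi(L|_{X_i})\}$ for $0\leq i\leq g-2$. Chaining these inequalities and using \autoref{lem_divisor}~(iii) for the elliptic curve $X_{g-1}$, namely $\beta(L|_{X_{g-1}})=1/\deg(L|_{X_{g-1}})=\chi(L|_{X_g})/\chi(L|_{X_{g-1}})$ with $\chi(L|_{X_g})=1$, yields $\beta(X,L)\leq\max_{0\leq i\leq g-1}\chi(L|_{X_{i+1}})/\chi(L|_{X_i})$.

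For (3) I would compute $\chi(L|_{X_i})$ by intersection theory, using $\chi(L|_{X_i})=(L|_{X_i})^{g-i}/(g-i)!=(L^{g-i}\cdot F_{g-i+1}\cdots F_g)/(g-i)!$. Expanding $L^{g-i}=(a_1F_1+\dots+a_gF_g+c\Gamma)^{g-i}$ and using $F_j^2=\Gamma^2=0$ from \autoref{lem_gamma}~(ii), the only monomials surviving intersection with $F_{g-i+1}\cdots F_g$ are products of $g-i$ distinct elements of $\{F_1,\dots,F_{g-i},\Gamma\}$: namely $F_1\cdots F_{g-i}$, contributing the intersection number $(F_1\cdots F_g)=1$, and $F_1\cdots\widehat{F_j}\cdots F_{g-i}\cdot\Gamma$ for $1\leq j\leq g-i$, each contributing $k_j$, again by \autoref{lem_gamma}~(ii); assembling the multinomial coefficients gives $\chi(L|_{X_i})=a_1\cdots a_{g-i}\bigl(1+\sum_{j=1}^{g-i}\tfrac{c}{a_j}k_j\bigr)=a_1\cdots a_{g-i}N_i$. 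For the ``in particular'' clause, this flag is of the type considered in (1), so $a_1\cdots a_{g-i}N_i=\chi(L|_{X_i})>0$ for $0\leq i\leq g-1$ makes $L$ ample by (1), and then (2) gives $\beta(X,L)\leq\max_{0\leq i\leq g-1}\chi(L|_{X_{i+1}})/\chi(L|_{X_i})=\max_{0\leq i\leq g-1}N_{i+1}/(a_{g-i}N_i)$.

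The step I expect to be the main obstacle is the ``if'' direction of the codimension-one ampleness criterion in (1); once that index-theoretic input is secured, the rest is a routine chaining of \autoref{lem_divisor} and a bookkeeping computation with the intersection numbers of \autoref{lem_gamma}. The only points that need care are that the hypotheses of \autoref{lem_divisor}~(v) hold at every stage of the chain in (2) — which is automatic once $L$ is known to be ample on $X$, as restriction preserves ampleness — and that the fractions $c/a_j$ and $N_{i+1}/(a_{g-i}N_i)$ appearing in (3) are read with the tacit non-vanishing of the relevant $a_j$ (equivalently, $\chi(L|_{X_i})$ is always written in the integral form $a_1\cdots a_{g-i}+\sum_{j=1}^{g-i}(a_1\cdots\widehat{a_j}\cdots a_{g-i})\,ck_j$).
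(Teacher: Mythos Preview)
Your arguments for (2) and (3) are essentially identical to the paper's. For (1), however, you take a genuinely different route. The paper does not use the index theorem or Cauchy interlacing; instead it invokes the numerical criterion \cite[Corollary 4.3.3]{MR2062673}: it suffices to exhibit an ample $H$ with $(H^i\cdot L^{g-i})>0$ for all $0\leq i\leq g-1$. The paper takes $H=t^{m_1}F_{n_1}+\cdots+t^{m_g}F_{n_g}$ with $m_1>\cdots>m_g>0$ and $t\gg 1$, so that $(H^i\cdot L^{g-i})/i!$ has leading term $(F_{n_1}\cdots F_{n_i}\cdot L^{g-i})\,t^{m_1+\cdots+m_i}=(g-i)!\,\chi(L|_{X_i})\,t^{m_1+\cdots+m_i}$, which is positive by hypothesis. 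Your approach via the Hermitian form is equally valid and arguably more conceptual: it isolates a clean codimension-one criterion and explains \emph{why} positivity propagates up the flag (the index can jump by at most one, and the parity constraint from $\chi>0$ kills the odd case). The paper's approach, by contrast, stays entirely within intersection theory and avoids any appeal to the analytic description of $c_1(L)$, at the cost of a slightly ad hoc choice of $H$. Either way the key input is external (Birkenhake--Lange's numerical criterion versus Mumford's index theorem), and the remaining parts are, as you say, routine bookkeeping.
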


\begin{proof}
(1) Only if part is clear. We show the if part.

By assumption, $(F_{n_1} \cdots F_{n_i} . L^{g-i}) =(L|_{X_i}^{g-i})  =(g-i)! \cdot \chi( L|_{X_i}) $ is positive for $ 0 \leq i \leq g-1$.
To show the ampleness of $L$,
it suffices to find  an ample line bundle $H$ on $X$ such that 
$(H^i.L^{g-i}) >0$ for all $0 \leq i \leq g-1$ by \cite[Corollary 4.3.3]{MR2062673}.

Take $H= t^{m_1} F_{n_1} + \dots + t^{m_g} F_{n_g}$ for integers $m_1 > \dots > m_g >0$ and $t \gg 1$.
Then
\[
\frac{(H^i. L^{g-i})}{i!} =   (F_{n_1} \cdots F_{n_i}.L^{g-i}) \, t^{m_1 + \dots +m_i} + \text{lower terms } 
\]
for $ 0 \leq i \leq g-1$.
Since $ (F_{n_1} \cdots F_{n_i}.L^{g-i}) >0$ and $t \gg 1$,
we have $(H^i.L^{g-i}) >0$ for $ 0 \leq i \leq g-1$ and the ampleness of $L$ follows.

\vspace{1mm}
\noindent
(2)
Since $X_{i+1} \subset X_{i}$ is an abelian subvariety of codimension one,
we can applying \autoref{lem_divisor} (v) to $ (X_i, L|_{X_i}) $ repeatedly,  and obtain
\begin{align*}
\beta(L) \leq \max \left\{ \beta( L|_{X_{g-1}}), \frac{\chi(L|_{X_{g-1}})}{\chi(L|_{X_{g-2}}) } , \dots , \frac{\chi(L|_{X_1})}{\chi(L|_{X_0})} \right\}.
\end{align*}
Since $ X_{g-1}$ is an elliptic curve and $X_g$ is a point, it holds that $\beta( L|_{X_{g-1}}) = 1/\deg( L|_{X_{g-1}})  = 1/\chi( L|_{X_{g-1}}) =  \chi(L|_{X_g})/\chi( L|_{X_{g-1}}) $
by \autoref{lem_divisor} (iii).
Thus (2) follows.

\vspace{1mm}
\noindent
(3) 
In this case,
\begin{align*}
\chi(L|_{X_i}) = \frac{(F_{ g-i+1} \dots F_g.L^{g-i})}{(g-i)!} = a_1 \cdots a_{g-i} + a_1 \cdots a_{g-i} \sum_{j=1}^{g-i} \frac{c}{a_j} k_j = \left(\prod_{j=1}^{g-i}  a_j \right) \cdot N_i
\end{align*}
for $0 \leq i \leq g$.
The last statement follows from (1) and (2).
\end{proof}

Combining \autoref{prop_K(L)} and \autoref{lem_flag},
we can prove  the following theorem:

\begin{thm}\label{thm_beta_general_type}
Let $D=(d_1, \dots, d_g)$ be a type.
\begin{enumerate}
\item For integers $ k_1,\dots,k_{g-2} , M \geq 1$, it holds that
 \[
\beta(D) \leq \max \left\{  \max_{2 \leq i \leq g-1}  \frac{ M_{i+1}}{  d_{g-i} M_i} ,  \frac{M_2}{d_{g-1}M} ,   \frac{M}{d_g}\right\},
\]
where $M_i=1+\sum_{j=1}^{g-i} \frac{d_{g-1}}{d_{j}} k_j  $ for $2 \leq i \leq g$.
\item 
For an integer $k \geq 0$ ,
it holds that $\beta(D) \leq 1/(d_1+kd_{g-1})$ if
$d_g \geq (d_1+kd_{g-1})  \prod_{j=1}^{g-1} \left( 1+ k \, \frac{d_{g-1}}{d_{j}}  \right)$.
\end{enumerate}
\end{thm}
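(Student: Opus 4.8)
The plan is to deduce both inequalities from the constructions of \autoref{sett_X,L}. Since $\beta(D)=\inf_{(X,L)}\beta(X,L)$, it suffices, for each admissible choice of the auxiliary integers, to produce one polarized abelian variety $(X,L)$ of type $D$ with $\beta(X,L)$ at most the claimed value. I take $X=E_1\times\cdots\times E_g$ with isogenies $f_i\colon E_i\to E_g$, $\ker f_i\cong\Z/k_i\Z$, and $L=a_1F_1+\cdots+a_gF_g+c\Gamma$ as in \autoref{sett_X,L}. Then \autoref{prop_K(L)}(2) reads off the type of $L$ from the data, and \autoref{lem_flag}(3) bounds $\beta(X,L)$ by $\max_{0\le i\le g-1}N_{i+1}/(a_{g-i}N_i)$, which is $\max_{0\le i\le g-1}\chi(L|_{X_{i+1}})/\chi(L|_{X_i})$ for the flag $X_i=\bigcap_{j=g-i+1}^{g}F_j$. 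So the whole problem reduces to choosing the parameters so that (a) the type comes out to be exactly $D$ and (b) these consecutive ratios are at most the claimed quantities.

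For part (1) I take $a_i=d_i$ for $1\le i\le g-2$ and $c=d_{g-1}$, and use the given $k_1,\dots,k_{g-2}$. With these choices $N_i=M_i$ for $2\le i\le g$, so the ratios with $2\le i\le g-1$ are exactly the terms $M_{i+1}/(d_{g-i}M_i)$ in the statement. It remains to choose $k_{g-1}\ge 1$ and the coefficients $a_{g-1},a_g$. Writing $a=a_{g-1}/d_{g-1}$ and $b=a_g/d_{g-1}$, the type-$D$ requirement of \autoref{prop_K(L)} is $abM_2+bk_{g-1}+a=d_g/d_{g-1}$, while a short computation gives $\chi(L|_{X_1})=(d_1\cdots d_{g-1})(aM_2+k_{g-1})$, so I also impose $aM_2+k_{g-1}=M$; equivalently, $k_{g-1}=M-aM_2$ and $bM+a=d_g/d_{g-1}$. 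Since $a$ may be taken negative — indeed arbitrarily negative within its residue class modulo $M$ — one can always find $a\equiv d_g/d_{g-1}\pmod{M}$ making $k_{g-1}=M-aM_2\ge 1$ and $b=(d_g/d_{g-1}-a)/M$ integral (and one may take $a_{g-1},a_g\ne 0$), so this system is solvable for every admissible tuple; positivity of all the $\chi(L|_{X_i})$ above certifies, via \autoref{lem_flag}(1), that $L$ is ample. Granting this, $\chi(L|_{X_1})=(d_1\cdots d_{g-1})M$ and $\chi(L|_{X_0})=\chi(L)=\prod_j d_j$, so the remaining two ratios are $M_2/(d_{g-1}M)$ and $M/d_g$, and \autoref{lem_flag}(3) gives exactly the asserted maximum.

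For part (2) with $k\ge 1$ I run the same construction but choose $k_1,\dots,k_{g-2}$ recursively, taking $k_{g-i}$ to be the least positive integer with $d_{g-1}k_{g-i}\ge(d_1+kd_{g-1}-d_{g-i})M_{i+1}$ (so $k_1=k$, and each later $k_{g-i}$ uses the already-computed $M_{i+1}$); this forces every interior ratio $M_{i+1}/(d_{g-i}M_i)$ to be $\le 1/(d_1+kd_{g-1})$, and unwinding the recursion bounds $M_2$, up to harmless rounding, by $\prod_{j=1}^{g-2}(1+k\,d_{g-1}/d_j)$. The hypothesis $d_g\ge(d_1+kd_{g-1})\prod_{j=1}^{g-1}(1+k\,d_{g-1}/d_j)$ then leaves room to pick an integer $M$ with $M_2(d_1+kd_{g-1})/d_{g-1}\le M\le d_g/(d_1+kd_{g-1})$, for which the remaining two ratios $M_2/(d_{g-1}M)$ and $M/d_g$ are also $\le 1/(d_1+kd_{g-1})$; so part (1) applied to these parameters gives $\beta(D)\le 1/(d_1+kd_{g-1})$. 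When $k=0$ the claim is immediate: $D=d_1\cdot(1,d_2/d_1,\dots,d_g/d_1)$, so $\beta(D)=\beta(1,d_2/d_1,\dots,d_g/d_1)/d_1\le 1/d_1$ by \autoref{lem_divisor}(ii).

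The step I expect to take the most care is the integer bookkeeping behind "granting this": one must pick $k_1,\dots,k_{g-1}$, $a_1,\dots,a_g$, $c$ (and $M$, for part (2)) so that all the divisibility hypotheses of \autoref{prop_K(L)} hold — so that $K(L)\cong(\bigoplus_{i=1}^{g}\Z/d_i\Z)^{\oplus 2}$ on the nose, not a refinement or coarsening — while simultaneously keeping every $k_i\ge 1$, keeping the relevant intersection numbers positive so \autoref{lem_flag}(1) applies, and keeping $a_{g-1},a_g$ nonzero so the ratio formulas of \autoref{lem_flag}(3) are valid; for part (2) one must also check that the rounding in the recursive choice of the $k_j$ (and in the choice of $M$) is absorbed by the slack built into the hypothesis on $d_g$. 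Once the parameters are fixed, the rest is direct substitution into \autoref{prop_K(L)} and \autoref{lem_flag}.
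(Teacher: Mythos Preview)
Your approach to part (1) is essentially identical to the paper's: the same choice $a_i=d_i$ for $i\le g-2$, $c=d_{g-1}$, the same identification $N_i=M_i$ for $i\ge 2$, the same pair of equations $k_{g-1}=M-aM_2$ and $bM+a=d_g/d_{g-1}$, and the same trick of pushing $a$ far into the negatives (within its residue class mod $M$) to force $k_{g-1}\ge 1$. The paper then applies \autoref{lem_flag}(2) directly to the computed $\chi(L|_{X_i})$ rather than the ratio form of \autoref{lem_flag}(3), so nonvanishing of $a_{g-1},a_g$ is not actually needed, but this is a cosmetic difference.

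For part (2) your overall strategy again matches the paper's --- specialize the $k_i$ and $M$ and feed them into part (1) --- but the specific choice differs. The paper does \emph{not} use a ``least positive integer'' recursion; it takes the closed form $k_i = k\prod_{j=1}^{i-1}\bigl(1+k\,d_{g-1}/d_j\bigr)$, which gives $M_i=\prod_{j=1}^{g-i}\bigl(1+k\,d_{g-1}/d_j\bigr)$ \emph{exactly} by a one-line induction. Then $M_{i+1}/(d_{g-i}M_i)=1/(d_{g-i}+kd_{g-1})\le 1/(d_1+kd_{g-1})$, and with $M=(1+k)M_2$ one gets $M_2/(d_{g-1}M)=1/((1+k)d_{g-1})\le 1/(d_1+kd_{g-1})$; the hypothesis on $d_g$ is then literally $M/d_g\le 1/(d_1+kd_{g-1})$. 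No rounding, no slack to track. Your recursion can be made to work too --- since $kM_{i+1}$ is an integer meeting your constraint, minimality gives $k_{g-i}\le kM_{i+1}$, whence your $M_2$ is bounded above by the paper's $M_2$, and then $M=(1+k)M_2$ (your $M_2$) does the job --- but carrying this out is precisely the bookkeeping you flag as delicate. The paper's explicit choice sidesteps it entirely.
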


\begin{proof}
(1) We use the notation in \autoref{sec_construction}.
Consider $L=  \sum_{i=1}^{g-2} d_{i} F_i  +a d_{g-1} F_{g-1} + b d_{g-1} F_g + d_{g-1} \Gamma$ on $X$ for integers $a, b$,
that is, the case 
\[
a_i=d_i  \  \text{ for } \ 1 \leq i \leq g-2, \quad a_{g-1}=a d_{g-1}, \quad a_g=b d_{g-1}, \quad c= d_{g-1}
\]
in \autoref{prop_K(L)}.
We note that $k_1, \dots,k_{g-2}$ are already given in the statement of this theorem, but $k_{g-1} $ is not.
We will choose $a,b, k_{g-1} $ suitably later.
For this $L$,
$N_i =1+\sum_{j=1}^{g-i} \frac{c}{a_j} k_j  $ in \autoref{lem_flag} (3) 
is equal to $M_i$ for $2 \leq i \leq g$.
Hence we have 
\begin{align}\label{eq_chi_X_i}
\chi(L|_{X_i}) = a_1 \cdots a_{g-i} \cdot N_i = d_1 \cdots d_{g-i} \cdot M_i >0  
\end{align}
for $2 \leq i \leq g$.
On the other hand, 
\begin{align*}
N_1&=  N_2 +(c/a_{g-1}) k_{g-1}  =   M_2 + k_{g-1}/a,\\
N_0 &= N_1+ (c/a_{g}) k_{g}  =  M_2 + k_{g-1}/a + 1/b.
\end{align*}
Hence 
\begin{align*}
\chi(L|_{X_1}) &= a_1 \cdots a_{g-1}  \cdot N_1 \\
&= d_1 \cdots d_{g-2} \cdot  a d_{g-1} \cdot (M_2 + k_{g-1}/a) = d_1 \cdots d_{g-1} (aM_2 +k_{g-1})
\end{align*}
and 
\begin{align*}
\chi(L|_{X_0}) &=  a_1 \cdots a_{g} \cdot N_0 \\
&=  d_1 \cdots d_{g-2} \cdot   a d_{g-1} \cdot b d_{g-1} \cdot (M_2 + k_{g-1}/a + 1/b) \\
&= d_1 \cdots d_{g-1} \cdot d_{g-1} (abM_2 + b k_{g-1} + a).
\end{align*}

\begin{claim}\label{claim_type}
If $ aM_2 +k_{g-1} $ and $ d':=abM_2 + b k_{g-1} + a $ are positive, then $L$ is ample of type $(d_1,\dots,d_{g-1}, d_{g-1}d') $.
\end{claim}

\begin{proof}[Proof of \autoref{claim_type}]
Since $d_i, k_i$ are positive, so are $M_i$ and 
$\chi(L|_{X_i}) = d_1 \cdots d_{g-i} \cdot M_i $  for any $2 \leq i \leq g$.
Hence if $aM_2 +k_{g-1} $ and $abM_2 + b k_{g-1} + a $ are positive, then $\chi(L|_{X_i})$ are positive for all $0 \leq i \leq  g$
and hence $L$ is ample by \autoref{lem_flag} (1).

To determine the type of $L$, we apply \autoref{prop_K(L)} to this $L$.
In this case, $N$ in \autoref{prop_K(L)} is equal to 
\[
N=  1+  \sum_{j=1}^{g-2} \frac{ c}{a_j} k_j = 1+ \sum_{j=1}^{g-2} \frac{ d_{g-1}}{d_{j}} k_j  =M_2.
\]
Hence $d= a + ab N +bk_{g-1}$ in  \autoref{prop_K(L)} is equal to $d'$ and 
\begin{align*}
K(L) &\simeq \left(\bigoplus_{i=1}^{g-2} \Z/a_{i} \Z \oplus \Z/ c \Z \oplus \Z / c d \Z \right)^{\oplus 2} \\
&=  \left(\bigoplus_{i=1}^{g-2} \Z/d_{i} \Z \oplus \Z/ d_{g-1} \Z \oplus \Z / d_{g-1} d' \Z \right)^{\oplus 2} =  \left(\bigoplus_{i=1}^{g-1} \Z/d_{i} \Z \oplus \Z / d_{g-1} d' \Z \right)^{\oplus 2}  .
\end{align*}
Thus the type of $L$ is $(d_1,\dots,d_{g-1} , d_{g-1}d')$.
\end{proof}

Now we choose suitable $a,b,k_{g-1}$ and obtain the upper bound in (1).
Write $d_g/d_{g-1}=Mq+r$ for integers $q, r$ and set
\[
a=r, \quad b=q, \quad k_{g-1 } = M - a M_2  .
\]
Replacing $q,r$ with $q+s, r-Ms$ for $s \gg 1$,
we can take such $q,r$ so that $k_{g-1 } \geq 1$. 
Then 
\begin{align*}
aM_2 + k_{g-1} &=M > 0, \\
d'=abM_2 + b k_{g-1} + a &= b (aM_2 + k_{g-1}  ) + a = bM+a = d_g/d_{g-1} >0
\end{align*}
and hence 
$L$ is ample of type $(d_1,\dots,d_g)=D$ by \autoref{claim_type}.
By \ref{eq_chi_X_i} and 
\begin{align*}
 \chi(L|_{X_1}) &=d_1 \cdots d_{g-1} (aM_2 +k_{g-1}) = d_1 \cdots d_{g-1} M ,\\
 \chi(L|_{X_0}) &=\chi(L) =  d_1  \cdots d_g ,
\end{align*}
we have
\begin{align*}
\beta(D) \leq \beta(L) &\leq \max_{0 \leq i \leq g-1} \frac{\chi(L|_{X_{i+1}})}{\chi(L|_{X_i})} \\
&= \max\left\{ \max_{2 \leq i \leq g-1} \frac{\chi(L|_{X_{i+1}})}{\chi(L|_{X_i})}, \frac{\chi(L|_{X_{2}})}{\chi(L|_{X_1})},\frac{\chi(L|_{X_{1}})}{\chi(L|_{X_0})}  \right\}\\
&=\max \left\{  \max_{2 \leq i \leq g-1}  \frac{M_{i+1}}{  d_{g-i} M_i} ,  \frac{M_2}{d_{g-1}M} ,   \frac{M}{d_g}\right\}.
\end{align*}
(2) 
If $k=0$, we have $\beta(d_1,\dots,d_g) = d_1^{-1} \beta(d_1/d_1, d_2/d_1,\dots, d_g/d_1) \leq d_1^{-1} =(d_1+k d_{g-1})^{-1}$,
where the first equality follows from \autoref{lem_divisor} \ref{item_beta(nl)}.
Hence we may assume $k \geq 1$.

Set $k_1=k$ and $k_{i} = \left( 1+ \frac{d_{g-1}}{d_{i-1}} k \right) k_{i-1}$ for $2 \leq i \leq g-2$ inductively.
Equivalently, we set
\[
k_i= k \prod_{j=1}^{i-1} \left(1+ \frac{d_{g-1}}{d_{j}} k \right)
\]
for $1 \leq i \leq g-2$.

\begin{claim}\label{claim_1+M_i}
For these $k_1,\dots,k_{g-2}$,
$M_i=1+\sum_{j=1}^{g-i} \frac{d_{g-1}}{d_{j}} k_j  $ is equal to $\prod_{j=1}^{g-i} \left(1+ \frac{d_{g-1}}{d_{j}} k \right)$ for $2 \leq i \leq g$.
\end{claim}

\begin{proof}
We prove this claim by the descending induction on $i$.
For $i=g$, this follows from $\prod_{j=1}^{g-i} \left(1+ \frac{d_{g-1}}{d_{j}} k \right) =1$ and $M_g=1$.
If $M_{i+1}=\prod_{j=1}^{g-i-1} \left(1+ \frac{d_{g-1}}{d_{j}} k \right)$ holds,
then 
\begin{align*}
M_{i} &= M_{i+1}+ \frac{d_{g-1}}{d_{g-i}} k_{g-i}\\
&=\prod_{j=1}^{g-i-1} \left(1+ \frac{d_{g-1}}{d_{j}} k \right) + \frac{d_{g-1}}{d_{g-i}}  \cdot k \prod_{j=1}^{g-i-1} \left(1+ \frac{d_{g-1}}{d_{j}} k \right)\\
&=\prod_{j=1}^{g-i} \left(  1 + \frac{d_{g-1}}{d_{j}} k \right)
\end{align*}
and this claim follows.
\end{proof}

By \autoref{claim_1+M_i},
we have
\[
\frac{ M_{i+1}}{  d_{g-i}  M_i} = \frac{1}{d_{g-i}  \left(  1 + \frac{d_{g-1}}{d_{g-i}} k \right)} = \frac{1}{d_{g-i} + d_{g-1} k} \leq \frac{1}{d_{1} + d_{g-1} k}
\]
for $2 \leq i \leq g-1$.
Take 
\[
M=(1 + k) M_2 = (1 + k)  \prod_{j=1}^{g-2} \left(  1 + \frac{d_{g-1}}{d_{j}} k \right) = \prod_{j=1}^{g-1} \left(  1 + \frac{d_{g-1}}{d_{j}} k \right) .
\]
Then
\[
\frac{M_2}{d_{g-1}M}  = \frac{1}{d_{g-1} (1+k)} \leq \frac{1}{d_{1} + d_{g-1} k}
\]
and hence

\begin{align*}
\beta(D) &\leq \max \left\{  \max_{2 \leq i \leq g-1}  \frac{M_{i+1}}{  d_{g-i} M_i} ,  \frac{M_2}{d_{g-1}M} ,   \frac{M}{d_g}\right\} 
\leq  \max \left\{   \frac{1}{d_{1} + d_{g-1} k} ,   \frac{\prod_{j=1}^{g-1} \left(  1 + \frac{d_{g-1}}{d_{j}} k \right) }{d_g}\right\}.
\end{align*}
Thus we have $\beta(D) \leq 1/(d_1+kd_{g-1})$ if
$d_g \geq (d_1+kd_{g-1})  \prod_{j=1}^{g-1} \left( 1+ k \, \frac{d_{g-1}}{d_{j}}  \right)$.
\end{proof}

\begin{rem}\label{ex_g=3_gen_type_variant}
In the proof of \autoref{thm_beta_general_type}, we consider line bundles of the form $\sum_{i=1}^{g-1} d_i F_i + a d_{g-1} F_{g-1} +b d_{g-1} F_{g} + d_{g-1} \Gamma$. 
Considering other line bundles, we could obtain other bounds as follows:

Consider the case $D=(d_1,d_2,d_3)=(1,c,cd)$. 
For integers $ k_1 , M\geq 1 $ with $c k_1 -1>0$,
it holds that
\begin{align}\label{eq_variant}
\beta(1,c,cd) =\max \left\{  \frac{1}{  c k_1-1} , \frac{c k_1 -1}{c M} ,  \frac{M}{cd}\right\}.
\end{align}

To show this,
we consider a line bundle $L$ of the form $L= -F_1 + ac F_2 +bc F_3 + c\Gamma$. 
We take $a,b $ which satisfy
$d= bM -a$ and $a,b \gg 1$.
We take $k_2 = a (c k_1-1) -M $, which is positive since $a $ is sufficiently large.
Then we have
\begin{align*}
\chi(L|_{F_2 \cap F_3}) &= (L.F_2.F_3) = ck_1 -1 >0,\\
\chi(L|_{ F_3}) &= (L^2.F_3)/2! =  ac(c k_1 -1) -ck_2 = cM >0,\\
\chi(L) &= (L^3)/3! = bc( a(c k_1 -1) -k_2 ) -ac^2 = bc^2M -ac^2 =c^2d >0.
\end{align*}
Hence $L$ is ample by \autoref{lem_flag} (1).
We can check that the type of $L$ is $(1,c,cd)$ by \autoref{prop_K(L)}, 
and hence \ref{eq_variant} follows from \autoref{lem_flag} (2).

For example,
we obtain $\beta(1,c,c ) < 1/2$ for $c \geq 5$
by taking $k_1=1, M=2$ in \ref{eq_variant}.
\end{rem}

We gives two corollaries of \autoref{thm_beta_general_type}.
The first one is a refinement of \autoref{thm_Ito:2020ab}:

\begin{cor}\label{cor_refinement}
Let $D=(d_1,\dots,d_g)=(1,\dots,1,d)$ for $d \geq 1$
and let $A_1,\dots,A_g $ be positive integers with $A_g=1$.
Then it holds that
\[
\beta(D) \leq \max \left\{ \frac{A_g}{A_{g-1}}, \frac{A_{g-1}}{A_{g-2}} \dots, \frac{A_2}{A_1}, \frac{A_1}{d} \right\}.
\]
\end{cor}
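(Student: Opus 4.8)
The plan is to deduce this from \autoref{thm_beta_general_type} (1), applied to $D=(1,\dots,1,d)$ with a telescoping choice of the free parameters $k_1,\dots,k_{g-2},M$. Since $d_1=\dots=d_{g-1}=1$ and $d_g=d$, the auxiliary quantities there simplify to $M_i=1+\sum_{j=1}^{g-i}k_j$, and the bound of \autoref{thm_beta_general_type} (1) reads
\[
\beta(D)\leq \max\Bigl\{\ \max_{2\leq i\leq g-1}\tfrac{M_{i+1}}{M_i},\ \tfrac{M_2}{M},\ \tfrac{M}{d}\ \Bigr\}.
\]
I want to choose the parameters so that $M_i=A_i$ for $2\leq i\leq g$ (note $M_g=1=A_g$ automatically) and $M=A_1$; then the three groups of terms above become $\frac{A_g}{A_{g-1}},\dots,\frac{A_3}{A_2}$, then $\frac{A_2}{A_1}$, then $\frac{A_1}{d}$, which is exactly the asserted right-hand side.

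First I would dispose of a degenerate case. If the finite sequence $A_2,A_3,\dots,A_{g-1},A_g=1$ fails to be strictly decreasing, say $A_j\leq A_{j+1}$ for some $2\leq j\leq g-1$, then the term $\frac{A_{j+1}}{A_j}$ occurs on the right-hand side and is $\geq 1$; since $\beta(D)\leq 1$ by \autoref{lem_divisor} (i), the inequality holds trivially. So I may assume $A_2>A_3>\dots>A_{g-1}>A_g=1$.

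In that case I would set $M:=A_1$ and $k_l:=A_{g-l}-A_{g-l+1}$ for $1\leq l\leq g-2$; these are positive integers because the $A_i$ are integers with $A_{g-l}>A_{g-l+1}$, and $M\geq 1$. A telescoping sum then gives $M_i=1+\sum_{l=1}^{g-i}(A_{g-l}-A_{g-l+1})=1+(A_i-A_g)=A_i$ for $2\leq i\leq g$. Substituting $M_i=A_i$ and $M=A_1$ into the displayed bound, and noting that $\max_{2\leq i\leq g-1}\frac{M_{i+1}}{M_i}$ ranges over $\frac{A_{i+1}}{A_i}$ for $i=g-1,\dots,2$, produces exactly $\beta(D)\leq\max\{\frac{A_g}{A_{g-1}},\dots,\frac{A_2}{A_1},\frac{A_1}{d}\}$.

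The only point to watch, and the reason for the case split, is the hypothesis of \autoref{thm_beta_general_type} (1) that the $k_l$ and $M$ be \emph{positive} integers: the telescoping substitution is legitimate precisely when $A_2,\dots,A_{g-1},A_g$ strictly decrease, and when it does not the asserted bound already exceeds $1$ and is therefore automatic. (For $g=2$ there are no $k_l$ at all, one has $M_2=1$, and it suffices to take $M=A_1$.)
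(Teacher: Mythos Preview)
Your proof is correct and follows essentially the same route as the paper's: both reduce to \autoref{thm_beta_general_type} (1) via the telescoping choice $k_l=A_{g-l}-A_{g-l+1}$ and $M=A_1$, after disposing of the degenerate case where the right-hand side is $\geq 1$. Your case split is in fact slightly sharper than the paper's (you only require $A_2>\dots>A_g$, whereas the paper also unnecessarily assumes $A_1>A_2$), but the argument is otherwise identical.
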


\begin{proof}
If $A_i \leq A_{i+1}$ for some $i$, then the upper bound in the statement is greater than or equal to one, and hence the inequality follows from $\beta(D) \leq 1$.
Thus we may assume $ A_i  > A_{i+1}$ for any $1 \leq i \leq g-1$.

We use the notation in \autoref{thm_beta_general_type}.
We take $k_i = A_{g-i}- A_{g-i+1} \geq 1$ for $1 \leq i \leq g-2$ and $M=A_1$.
Since $d_1 = \dots =d_{g-1} =1$, we have
\begin{align*}
M_i = 1+ \sum_{j=1}^{g-i} k_j =  1 +\sum_{j=1}^{g-i} (A_{g-j}- A_{g-j+1}  ) =1 + A_i -A_g =A_i
\end{align*}
for $2 \leq i \leq g$.
Hence this corollary follows from  \autoref{thm_beta_general_type} (1).
\end{proof}

If we take $A_i= m^{g-i}$ for $m = \lfloor \sqrt[g]{d} \rfloor$ in \autoref{cor_refinement},
we can recover \autoref{thm_Ito:2020ab} (1).
Similarly,
\autoref{thm_Ito:2020ab} (2) can be recovered by taking $A_i= m^{g-i} + \dots + m+1$.

\begin{rem}\label{rem_general_M_i}
By \autoref{thm_beta_general_type} and \autoref{cor_refinement},
it might be interesting to ask whether 
\[
\beta(D) \leq \max \left\{ \frac{A_g}{d_1A_{g-1}}, \frac{A_{g-1}}{d_2A_{g-2}} \dots, \frac{A_2}{d_{g-1}A_1}, \frac{A_1}{d_g} \right\}
\]
holds or not
for any type $D=(d_1,\dots,d_g)$ and any positive integers $A_1,\dots,A_g $ with $A_g=1$.

\end{rem}

The second corollary gives an asymptotic behavior of $\beta(d_1,\dots,d_g)$ for $d_{g} \gg d_{g-1}$:

\begin{cor}\label{cor_asymptotic}
Fix positive integers $d_1|d_2|\cdots|d_{g-1}$.
Then it holds that 
\begin{align*}
\lim_{
\begin{subarray}{c}
d_g \ra \infty \\
d_{g-1}|d_g 
\end{subarray}
}
\frac{\beta(d_1,\dots, d_g)}{1/\sqrt[g]{d_1 \cdots d_g}} = 1.
\end{align*}
\end{cor}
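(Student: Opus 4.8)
The plan is to show that $\beta(D)\cdot\sqrt[g]{d_1\cdots d_g}$ stays between $1$ and $1+\epsilon$ for every $\epsilon>0$. Here the lower bound costs nothing: by \autoref{lem_divisor} (iv) every polarized abelian variety $(X,L)$ of type $D=(d_1,\dots,d_g)$ has $\beta(X,L)\geq 1/\sqrt[g]{\chi(L)}=1/\sqrt[g]{d_1\cdots d_g}$, hence $\beta(D)\geq 1/\sqrt[g]{d_1\cdots d_g}$ and the quotient in the statement is $\geq 1$ for every admissible $d_g$. (When $g=1$ the quotient equals $1$ identically by \autoref{lem_divisor} (iii), so one may assume $g\geq 2$.) Thus everything reduces to proving that for each fixed $\epsilon>0$ the quotient is $\leq 1+\epsilon$ once $d_g$ is large enough among the multiples of $d_{g-1}$; letting $\epsilon\to 0$ then yields the corollary.

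For the matching upper bound I would feed \autoref{thm_beta_general_type} (2) a well-chosen $k$. Set $P\coloneqq d_1\cdots d_{g-1}$, a constant. Because $d_1\leq d_{g-1}$ and $d_j\leq d_{g-1}$ for $1\leq j\leq g-1$, every integer $k\geq 0$ satisfies
\[
(d_1+kd_{g-1})\prod_{j=1}^{g-1}\Bigl(1+k\,\frac{d_{g-1}}{d_j}\Bigr)\ \leq\ (k+1)\,d_{g-1}\prod_{j=1}^{g-1}\frac{(k+1)\,d_{g-1}}{d_j}\ =\ \frac{(k+1)^g\,d_{g-1}^g}{P}.
\]
Hence, whenever $(k+1)^g d_{g-1}^g/P\leq d_g$, equivalently $k+1\leq \sqrt[g]{d_g P}/d_{g-1}$, the hypothesis of \autoref{thm_beta_general_type} (2) is verified, so $\beta(D)\leq 1/(d_1+kd_{g-1})\leq 1/(kd_{g-1})$ provided $k\geq 1$.

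It remains to pick $k$ as large as that constraint permits while keeping $kd_{g-1}$ within a factor $1+\epsilon$ of $\sqrt[g]{d_g P}$. Fixing $\epsilon\in(0,1)$, I would take
\[
k\ \coloneqq\ \Bigl\lceil\, \frac{\sqrt[g]{d_g P}}{(1+\epsilon)\,d_{g-1}}\,\Bigr\rceil .
\]
For $d_g$ large (depending only on $\epsilon$, $P$, $d_{g-1}$) the real interval $\bigl[\,\sqrt[g]{d_g P}/((1+\epsilon)d_{g-1}),\ \sqrt[g]{d_g P}/d_{g-1}-1\,\bigr]$ has length $\frac{\epsilon}{1+\epsilon}\cdot\frac{\sqrt[g]{d_g P}}{d_{g-1}}-1$, which is $\geq 1$, so this $k$ is a positive integer lying inside it. The right endpoint gives $k+1\leq \sqrt[g]{d_g P}/d_{g-1}$, so the estimate of the previous paragraph applies; the left endpoint gives $kd_{g-1}\geq \sqrt[g]{d_g P}/(1+\epsilon)=\sqrt[g]{d_1\cdots d_g}/(1+\epsilon)$. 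Therefore $\beta(D)\leq (1+\epsilon)/\sqrt[g]{d_1\cdots d_g}$ for all sufficiently large admissible $d_g$, completing the argument.

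I do not anticipate a genuine difficulty here; the only points needing care are that the interval for $k$ is eventually non-empty — which holds because its length tends to $\infty$ — and that the selected $k$ is $\geq 1$, which is automatic since its left endpoint is positive. One might fear the crude bound in the second paragraph is too wasteful, but as $P$ and $d_{g-1}$ are fixed it only perturbs lower-order terms and leaves the leading growth $\sqrt[g]{d_g}$ untouched, so nothing is lost asymptotically.
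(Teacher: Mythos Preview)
Your proof is correct and follows essentially the same route as the paper: both bound the product in \autoref{thm_beta_general_type}\,(2) above by $((k+1)d_{g-1})^g/(d_1\cdots d_{g-1})$ via $d_j\leq d_{g-1}$, and then pick $k$ close to $\sqrt[g]{d_1\cdots d_g}/d_{g-1}$. The only cosmetic difference is that the paper takes $k=\lfloor \sqrt[g]{d_1\cdots d_g}/d_{g-1}\rfloor-1$ directly and computes the limit, whereas you wrap the same choice in an $\epsilon$--argument with a ceiling; neither gains anything over the other.
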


\begin{proof}
Since $\beta(d_1,\dots,d_g) \geq 1/\sqrt[g]{d_1 \cdots d_g}$ by \autoref{lem_divisor} (iv),
the inequality $\lim \frac{\beta(d_1,\dots, d_g)}{1/\sqrt[g]{d_1 \cdots d_g}} \geq 1$ follows. 
 
 To show the converse inequality,
 set $k= \lfloor \sqrt[g]{d_1 \cdots d_g} /d_{g-1} \rfloor -1$ in \autoref{thm_beta_general_type} (2).
 If $d_g \gg d_{g-1}$, then $k \geq 0$ and  
 \[
 d_1 \cdots d_g  \geq \left( (1+k)d_{g-1} \right)^g \geq (d_1 + k d_{g-1}) \prod_{j=1}^{g-1} (d_{j}+k d_{g-1})
 \]
 since $d_1 \leq d_2 \leq \cdots \leq d_{g-1}$.
 Thus we have $\beta(d_1,\dots,d_g) \leq 1/(d_1+k d_{g-1})$
by \autoref{thm_beta_general_type} (2) and hence 
\[
\lim_{
\begin{subarray}{c}
d_g \ra \infty \\
d_{g-1}|d_g 
\end{subarray}
}
\,  \frac{\beta(d_1,\dots, d_g)}{1/\sqrt[g]{d_1 \cdots d_g}}
\leq 
\lim_{
\begin{subarray}{c}
d_g \ra \infty \\
d_{g-1}|d_g 
\end{subarray}
}
\, \frac{\sqrt[g]{d_1 \cdots d_g}}{d_1+k d_{g-1}} = \lim_{
\begin{subarray}{c}
d_g \ra \infty \\
d_{g-1}|d_g 
\end{subarray}
}
\,
\frac{\sqrt[g]{d_1 \cdots d_g}}{d_1+( \lfloor \sqrt[g]{d_1 \cdots d_g} /d_{g-1} \rfloor -1) d_{g-1}} =1.
\]
\end{proof}

By this corollary,
we see that  \autoref{thm_beta_general_type}  gives  asymptotically sharp bounds for $d_g \gg d_{g-1}$.

\begin{rem}\label{rem_related_results}
We recall some related known results.

\vspace{1mm}
\noindent
(1) 
As explained in \cite[Proposition 3.1]{Ito:2020aa}, 
$\beta(X,L) \leq g/\ep(X,L)$ holds for a polarized abelian variety $(X,L)$ by \cite[Lemma 1.2]{MR2833789} and \cite[Proposition 1.4]{MR4114062},
where $\ep(X,L) $ is the Seshadri constant of $(X,L)$.
Hence we have
$\beta(D) \leq \frac{4g}{\sqrt[g]{2g! } \sqrt[g]{d_1 \cdots d_g}}$ for a type $D=(d_1,\dots,d_g)$
by the upper bound of the Seshadri constant in \cite[Theorem 1]{MR1660259}.

\vspace{1mm}
\noindent
(2)  Jiang  \cite[Theorem 2.9]{MR4395094} proves that $\beta(X,L) < t$ holds for a very general abelian variety $(X,L)$ of type $(d_1,\dots,d_g)$ 
if $t > \frac{g}{\sqrt[g]{g!} \sqrt[g]{d_1 \cdots d_g}}$ and $t d_i \leq g/(g-i) $ for $1 \leq i \leq g-1$.
We note that the condition that $(X,L)$ is very general is explicit there, i.e.\
 \cite[Theorem 2.9]{MR4395094} just
requires the space of Hodge classes to be of dimension one in each degree.

\vspace{2mm}
Though  \autoref{thm_beta_general_type} cannot recover these results,
the bounds obtained from \autoref{thm_beta_general_type} are better than those obtained from the above results
if $d_g \gg d_{g-1}$ 
since $  \frac{g}{\sqrt[g]{g!}} \sim e$ for $g \gg 1$ by Stirling's formula.
\end{rem}

In the last of this section,
we give an explicit sufficient condition for $\beta(D) < 1/2$ when $ D $ is of the form $(1^{(n-1)}, 2^{(n')},d) $.
Recall that $(1^{(n-1)}, 2^{(n')},d) $ denotes the type $(d_1,\dots,d_{n+n'})$ with $d_1= \dots =d_{n-1}=1, d_n=\dots = d_{n+n'-1} =2$ and $d_{n+n'} =d$.
We will use this  lemma in the proof of \autoref{thm_proj_normality}.

\begin{lem}\label{cor_111222d}
Let $D=(1^{(n-1)}, 2^{(n')},d)$ for $n \geq 1, n' \geq 0$.
Then $\beta(D) < 1/2 $ holds if $d \geq 2^{n+1} +2n' -1$.
\end{lem}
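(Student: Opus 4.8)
The plan is to derive the bound from \autoref{thm_beta_general_type}(1) after a suitable choice of the parameters $k_1,\dots,k_{g-2}$ and $M$. First I would dispose of the case $n'=0$, where $D=(1^{(n-1)},d)$ and $g=n$: applying \autoref{cor_refinement} with $A_i=2^{\,n-i+1}-1$ (so that $A_g=1$) bounds $\beta(D)$ by a maximum of ratios of the form $\frac{2^{k}-1}{2^{k+1}-1}$ with $1\leq k\leq n-1$ together with $\frac{2^n-1}{d}$, and each of these is $<1/2$ once $d\geq 2^{n+1}-1$; for $g=1$ one simply has $\beta(D)=1/d\leq 1/3$ by \autoref{lem_divisor}(iii). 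So from now on I assume $n'\geq 1$, whence $g=n+n'\geq 2$ and $d_{g-1}=2$.

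Under this assumption $d_1=\dots=d_{n-1}=1$, $d_n=\dots=d_{g-1}=2$, $d_g=d$, so in \autoref{thm_beta_general_type}(1) the coefficients are $d_{g-1}/d_j=2$ for $1\leq j\leq n-1$ and $d_{g-1}/d_j=1$ for $n\leq j\leq g-1$. Consequently, choosing $k_j\geq 1$ forces the increment $M_i-M_{i+1}=(d_{g-1}/d_{g-i})\,k_{g-i}$ to be an even positive integer for $n'+1\leq i\leq g-1$ and an arbitrary positive integer for $2\leq i\leq n'$; and the three pieces of the bound behave as follows: $M_{i+1}/(d_{g-i}M_i)<1/2$ is equivalent to $2M_{i+1}<M_i$ in the first range and to $M_{i+1}<M_i$ in the second, $M_2/(d_{g-1}M)=M_2/(2M)<1/2$ is equivalent to $M_2<M$, and $M/d_g=M/d<1/2$ is equivalent to $d\geq 2M+1$.

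I would then prescribe $M_{g-j}=2^{\,j+1}-1$ for $0\leq j\leq n-1$ (so the chain runs $M_g=1,\,M_{g-1}=3,\dots,M_{n'+1}=2^n-1$, each step satisfying $M_{g-j}=2M_{g-j+1}+1$ with even increment $2^j$, realized by $k$-value $2^{\,j-1}\geq 1$ for $j\geq 1$), then $M_i=M_{i+1}+1$ for $2\leq i\leq n'$ (so $M_2=2^n+n'-2$, realized by $k$-value $1$), and finally $M=M_2+1=2^n+n'-1$. By the previous paragraph every contribution to the bound is then $<1/2$ except possibly $M/d$, and $M/d<1/2$ holds precisely because $d\geq 2M+1=2^{n+1}+2n'-1$. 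Hence \autoref{thm_beta_general_type}(1) gives $\beta(D)<1/2$.

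The step needing the most care is the bookkeeping at the two extremes of the chain: when $n=1$ there are no even-increment steps at all (the first phase already begins and ends at $M_{n'+1}=M_g=1$), and when $n'=1$ there are no $+1$-steps (so $M_2=M_{n'+1}=2^n-1$ and $M=2^n$); one has to check that in these boundary regimes all the $k_j$ remain well-defined positive integers and that the arithmetic still produces exactly the threshold $d\geq 2^{n+1}+2n'-1$. A mild variant that avoids treating $n'=0$ separately is to run the same \autoref{thm_beta_general_type}(1) computation uniformly for $g\geq 2$, regarding the transition from $M_2$ to $M$ as an even-increment step when $d_{g-1}=1$; this merely shifts the count of even-increment steps by one and reproduces the bound in all cases.
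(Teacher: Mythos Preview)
Your proposal is correct and follows essentially the same approach as the paper's proof: both apply \autoref{thm_beta_general_type}(1) with the same parameter choices $k_j=2^{j-1}$ for $1\leq j\leq n-1$, $k_j=1$ for $n\leq j\leq g-2$, and $M=2^n+n'-1$ (you phrase it equivalently by prescribing the $M_i$ values), and both dispose of the $n'=0$ case separately---the paper via \autoref{thm_Ito:2020ab}, you via \autoref{cor_refinement} with $A_i=2^{n-i+1}-1$, which is exactly the specialization the paper notes recovers \autoref{thm_Ito:2020ab}(2). Your explicit attention to the boundary cases $n=1$ and $n'=1$ is a welcome addition but does not constitute a different route.
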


\begin{proof}
Set $g=n+n'$.
If $n' =0$, this lemma follows from \autoref{thm_Ito:2020ab}.
Thus we may assume $n' \geq 1$ and hence $d_{g-1} =2$.

We apply \autoref{thm_beta_general_type} (1) to
\[
k_i= 2^{i-1} \ \text{ for } \ 1 \leq i \leq n-1, \quad k_n= \dots = k_{g-2} =1, \quad M=2^{n}  + n'-1.
\]
For $2 \leq i \leq n'$, we have $n \leq g-i \leq  g-2 $ and hence $d_{g-i} =2$. Thus it holds that 
\begin{align*}
 \frac{ M_{i+1}}{  d_{g-i} M_i} < \frac1{d_{g-i}} =\frac12
\end{align*}
for $2 \leq i \leq n'$ by $ M_{i+1} < M_i$.
For $n' +1\leq i \leq g$,  
we have $g-i \leq n-1$ and hence
\begin{align*}
 M_i = 1+\sum_{j=1}^{g-i} \frac{d_{g-1}}{d_{j}} k_j =1+\sum_{j=1}^{g-i} \frac{2}{1} \cdot 2^{j-1} =2^{g-i+1}-1.
\end{align*}
Thus it holds that
\begin{align*}
 \frac{ M_{i+1}}{  d_{g-i}  M_i}  =  \frac{2^{g-i}-1}{  1 \cdot (2^{g-i+1}-1)}  < \frac12
\end{align*}
for $n'+1 \leq i \leq g-1$.
Since 
\begin{align*}
M_2 &= 1+\sum_{j=1}^{g-2} \frac{d_{g-1}}{d_{j}} k_j  \\
&= 1+\sum_{j=1}^{n-1} \frac{d_{g-1}}{d_{j}} k_j   + \sum_{j=n}^{g-2} \frac{d_{g-1}}{d_{j}} k_j  \\
&=2^{n} -1 + g-1 -n =2^{n}  + n'-2 =M-1,
\end{align*}
we have 
\begin{align*}
\frac{M_2}{d_{g-1} M} = \frac{M-1}{2 M}  <\frac12. 
\end{align*}
Hence \autoref{thm_beta_general_type} implies
$\beta(D) < 1/2$ if $M/d_g <1/2$,
that is, $d_g > 2M= 2^{n+1}  + 2n'-2$. 
\end{proof}

\section{Decompositions to products of lower dimensions}\label{sec_decomposition}

In this section,
we show an easy proposition,
which could give better bounds of $\beta(D)$ than those obtained by \autoref{thm_beta_general_type} for some $D$. 
We also give an application of the obtained bounds to the Infinitesimal Torelli Theorem  for hypersurfaces in abelian varieties.

\vspace{2mm}
Let $(X,L)$ be a general polarized abelian variety of type $(d_1,\dots,d_g)$ of dimension $g$.
In \cite[Theorem 1.1]{MR2231163}, L.~Fuentes Garc\'{\i}a shows that $L$ is basepoint free if $d_1 + \dots + d_g \geq  2g $.
The idea of the proof is considering products of polarized abelian varieties and reducing the general case 
to the type $(1,\dots, 1,d)$, which are treated in \cite{MR1299059}.
His idea can be used to bound $\beta(D)$ as follows.

\vspace{2mm}
Let 
$D_1=(d_{1},\dots,d_{g})$ and  $D_2=(d'_{ 1},\dots,d'_{g'})$ be two types. 
We define $D_1 \times D_2 = (\delta_1,\dots, \delta_{g+g'})$ by the conditions $\delta_1 | \cdots | \delta_{g+g'}$ and 
\[
\bigoplus_{i=1}^g \Z/d_{i} \Z \oplus \bigoplus_{i'=1}^{g'} \Z/d'_{i'}\Z \simeq \bigoplus_{j=1}^{g'+g''} \Z/\delta_j \Z.
\]

If $(X_1,L_1)$ and $(X_2,L_2)$ are polarized abelian varieties of type $D_1$ and $D_2$  respectively,
then $D_1 \times D_2 $ is nothing but the type of the product $(X_1 \times X_2, p_1^* L_1 \otimes p_2^* L_2) $
since $K(p_1^* L_1 \otimes p_2^* L_2) \simeq K(L_1) \oplus K(L_2)$.

\begin{prop}\label{prop_beta(d_1,...,d_g)}
For 
types $D_1$ and $D_2$,
it holds that $\beta(D_1 \times D_2) \leq \max\{ \beta (D_1) , \beta(D_2)\}$.
\end{prop}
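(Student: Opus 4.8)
The plan is to argue directly from the definition of the basepoint-freeness threshold via cohomological rank functions in \cite{MR4157109}. First I would reduce the statement to one about a single product. Since $\beta(D_1\times D_2)=\inf_{(X,L)}\beta(X,L)$, the infimum running over polarized abelian varieties of type $D_1\times D_2$, and since $(X_1\times X_2,\,p_1^*L_1\otimes p_2^*L_2)$ has type $D_1\times D_2$ whenever $(X_i,L_i)$ has type $D_i$ (as noted just before the proposition), it suffices to prove
\[
\beta\bigl(X_1\times X_2,\ p_1^*L_1\otimes p_2^*L_2\bigr)\ \leq\ \max\{\beta(X_1,L_1),\ \beta(X_2,L_2)\}
\]
for every pair $(X_1,L_1)$, $(X_2,L_2)$. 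Granting this, one picks $(X_i,L_i)$ of type $D_i$ with $\beta(X_i,L_i)$ arbitrarily close to $\beta(D_i)$: the left-hand side is then $\geq\beta(D_1\times D_2)$, while the right-hand side tends to $\max\{\beta(D_1),\beta(D_2)\}$, which yields the proposition.

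Next I would prove the displayed inequality. Write $g_i=\dim X_i$, $\ell_i=c_1(L_i)$, and recall from \cite{MR4157109} that, for $x=p/q\in\Q_{>0}$, one has $h^1_{\mathcal I_o}(x\ell_i)=0$ if and only if $h^1\bigl(X_i,\ \mathcal I_{X_i[q]}\otimes L_i^{\otimes pq}\otimes\alpha_i\bigr)=0$ for general $\alpha_i\in\widehat{X_i}$, where $\mathcal I_o$ is the ideal sheaf of the origin and $X_i[q]=\ker\mu_q^{X_i}=\mu_q^{-1}(o_i)$; that $\beta(X_i,L_i)=\inf\{x\in\Q_{>0}\mid h^1_{\mathcal I_o}(x\ell_i)=0\}$, so $h^1_{\mathcal I_o}(x\ell_i)=0$ once $x>\beta(X_i,L_i)$; and that $h^j_{\mathcal I_o}(x\ell_i)=0$ for $j\geq2$ (immediate from $0\to\mathcal I_{X_i[q]}\to\mathcal O_{X_i}\to\mathcal O_{X_i[q]}\to0$ and ampleness). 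Fix a rational $x=p/q>\max\{\beta(X_1,L_1),\beta(X_2,L_2)\}$ and a general $\alpha=(\alpha_1,\alpha_2)\in\widehat{X_1}\times\widehat{X_2}=\widehat{X_1\times X_2}$, and set $N_i=L_i^{\otimes pq}\otimes\alpha_i$, an ample line bundle on $X_i$ with $H^j(N_i)=0$ for $j\geq1$. Since $\mu_q^{X_1\times X_2}=\mu_q^{X_1}\times\mu_q^{X_2}$ we have $(X_1\times X_2)[q]=X_1[q]\times X_2[q]$; writing $\mathcal I_i=\mathcal I_{X_i[q]}$, the Mayer--Vietoris sequence of the subschemes $X_1[q]\times X_2$ and $X_1\times X_2[q]$, whose scheme-theoretic intersection is $X_1[q]\times X_2[q]$, reads
\[
0\ \to\ \mathcal I_1\boxtimes\mathcal I_2\ \to\ (\mathcal I_1\boxtimes\mathcal O_{X_2})\oplus(\mathcal O_{X_1}\boxtimes\mathcal I_2)\ \to\ \mathcal I_{X_1[q]\times X_2[q]}\ \to\ 0.
\]
I would tensor this with $N_1\boxtimes N_2$ and use the Künneth formula: in $H^1$ of each of the two middle terms, and in $H^2(\mathcal I_1\boxtimes\mathcal I_2\otimes(N_1\boxtimes N_2))$, every Künneth summand carries a tensor factor among $H^1(\mathcal I_i\otimes N_i)=0$ (by the choice of $x$), $H^2(\mathcal I_i\otimes N_i)=0$, and $H^{\geq1}(N_i)=0$; hence all these groups vanish. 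The long exact sequence then forces $H^1\bigl(\mathcal I_{X_1[q]\times X_2[q]}\otimes(N_1\boxtimes N_2)\bigr)=0$, i.e.\ $h^1_{\mathcal I_o}\bigl(x\,(\ell_1\boxtimes\ell_2)\bigr)=0$, so $\beta(X_1\times X_2,\,p_1^*L_1\otimes p_2^*L_2)\leq x$; letting $x$ decrease to $\max\{\beta(X_1,L_1),\beta(X_2,L_2)\}$ finishes the inequality.

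I do not expect a real obstacle: once the definition is unwound the argument is short, which is presumably why the paper calls it easy. The one thing that needs a little care is the compatibility of genericity — that a general point of $\widehat{X_1}\times\widehat{X_2}$ has both coordinates general, so the two factorwise computations of $h^1_{\mathcal I_o}$ are valid for the same $\alpha$ — together with the routine tracking of Künneth summands through the long exact sequence. Conceptually the statement is just that the $\mathrm{IT}_0$-type vanishing underlying ``$h^1_{\mathcal I_o}(x\ell)=0$'' is preserved under exterior tensor product of sheaves on abelian varieties; one could alternatively phrase the whole proof in that standard language.
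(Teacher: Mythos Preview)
Your argument is correct and rests on the same key fact as the paper's proof, namely the product behaviour of the basepoint-freeness threshold. The paper's version is more terse: it invokes \cite[Lemma 4.3]{Ito:2020aa} for the equality $\beta(X_1\times X_2,\,p_1^*L_1\otimes p_2^*L_2)=\max\{\beta(X_1,L_1),\beta(X_2,L_2)\}$ as a black box, and then takes \emph{very general} $(X_i,L_i)$ of type $D_i$ so that $\beta(X_i,L_i)=\beta(D_i)$ on the nose (this is \autoref{rem_beta(D)}(ii)), which eliminates the need for your limiting step. Your Mayer--Vietoris/K\"unneth computation is essentially a self-contained proof of the $\leq$ half of that cited lemma; so the substance is the same, with your version trading brevity for independence from the external reference.
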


\begin{proof}
Let $(X_1,L_1)$ and $(X_2,L_2)$ be very general polarized abelian varieties of type $D_1$ and $D_2$  respectively.
Then we have
\begin{align*}
\beta (X_1 \times X_2, p_1^* L_1 \otimes p_2^* L_2) = \max\{ \beta (X_1,L_1) , \beta(X_2,L_2)\}  = \max\{ \beta (D_1) , \beta(D_2)\},
\end{align*}
where the first equality follows from \cite[Lemma 4.3]{Ito:2020aa} and the second one follows from \autoref{rem_beta(D)} (ii).
Since $\beta (D_1 \times D_2) \leq \beta (X_1 \times X_2, p_1^* L_1 \otimes p_2^* L_2)$ by definition,
this proposition follows.
\end{proof}

Combining \autoref{prop_beta(d_1,...,d_g)} with \autoref{thm_Ito:2020ab},
we could obtain a better bound of $\beta(D)$ than the bounds by \autoref{thm_beta_general_type} for some $D$.
We illustrate the case $g=3$:

\begin{ex}\label{ex_(1,c,cd)}
(1) Take positive integers $d_2 | d_3$ and
consider the type $(1,d_2,d_3)$.
Since $(1,d_2,d_3) =(d_2) \times (1,d_3)$,
we have $\beta(1,d_2,d_3) \leq  \max\{\beta(d_2) , \beta(1,d_3)\} =\max\{1/d_2, \beta(1,d_3)\} $.

Hence if $d_2 \geq m, d_3 \geq m^2$ for an integer $m \geq 1$,
then $\beta(1,d_2,d_3) \leq 1/m$ by \autoref{thm_Ito:2020ab}.
In particular,
$\beta(1,m,m^2) =1/m$ holds since $\beta(1,m,m^2)  \geq 1/\sqrt[3]{1\cdot m \cdot m^2}$ by \autoref{lem_divisor} (iv).
It is not so difficult to see that we cannot obtain this bound from \autoref{thm_beta_general_type} nor results in \autoref{rem_related_results}.

\vspace{1mm}
\noindent
(2) (1) can be generalized as follows:
Let $a,b,c \geq 1$ be integers such that $\mathrm{gcd}(a,b) =1$,
and consider $(a) \times (b, bc)$.
By $\mathrm{gcd}(a,b) =1$, we have 
\[
\Z /a \Z \oplus (\Z/b \Z \oplus \Z/ bc \Z ) \simeq \Z / ab \Z \oplus  \Z/ bc \Z \simeq \Z / nb \oplus \Z / (abc/n)\Z,
\]
where $n =\mathrm{gcd}(a,c)$. 
Thus $(a) \times (b, bc) = (1, nb, abc/n)$ and we have 
\[
\beta (1,nb,abc/n) \leq \max\{ \beta(a), \beta(b,bc)\} =\max \{ 1/a,  \beta(1,c)/b \}.
\]

For example,
\begin{itemize}
\item (1) is nothing but the case $a=d_2,b=1,c=d_3 $.
\item If $a=3,b=2,c=3 $, then we have $\beta(1,6, 6) \leq \max \left\{ 1/3,  \beta(1,3)/2 \right\} =1/3$
since $\beta(1,3) =  2/3  $ by \cite[Table 1]{Ito:2020ab}.
This bound does not follow from \autoref{thm_beta_general_type}.
\item If $a=5, b=2, c=8$, then we have $\beta(1,2, 80) \leq \max \left\{ 1/5,  \beta(1,8)/2 \right\} \leq 1/5$
since $\beta(1,8) \leq 3/8  $ by \cite[Table 1]{Ito:2020ab}.
This bound follows from \autoref{thm_beta_general_type} as well.
\end{itemize}

\vspace{1mm}
\noindent
(3) 
On the other hand,
we cannot recover \autoref{thm_beta_general_type} from \autoref{thm_Ito:2020ab}  and \autoref{prop_beta(d_1,...,d_g)} in general.
For example, the following are all the decompositions of $(1,2,2p) $ for a prime number $p \geq 3$:
\[
(1) \times (2,2p),\quad (2) \times (1,2p),  \quad (p) \times (2,2), \quad (2p) \times (1,2).
\]
By these decompositions,
we only have $\beta(1,2,2p) \leq 1/2$,
though $\beta(1,2,2p) \sim 1/\sqrt[3]{4p}$ for $ p \gg 1$ by \autoref{cor_asymptotic}.
\end{ex}

For a polarized abelian variety $(X,L)$,  $L$ is basepoint free if and only if
$\beta(L) <1 $ by \autoref{thm_Bpf_threshold} (1).
Hence \cite[Theorem 1.1]{MR2231163} can be rephrased as:
For a type $D=(d_1,\dots, d_g)$,
$\beta(D) < 1$ holds if $d_1 + \dots + d_g \geq  2g $.
Using  \autoref{prop_beta(d_1,...,d_g)},
we can show a little bit more:

\begin{prop}\label{prop_refiniment}
Let $D=(d_1,\dots, d_g)$ be a type with $d_1 + \dots + d_g \geq  2g $.
Then 
\begin{enumerate}
\item $\beta(D) \leq  g/(g+1)$,
\item $\beta(D) \leq (g-1)/g$ if $g \geq 2$ except for  $(d_1,\dots,d_g) = (1,\dots,1,g+1)$,
\item $\beta(D) <  (g-1)/g$ if $g \geq 2$ except for
\begin{itemize}
\item $g=2$ and $ (d_1,d_2) = (1,3), (1,4), (1,5) , (1,6), (2,2), (2,4)$, or
\item $g=3 $ and $ (d_1,d_2,d_3) = (1,1,4),(1,1,5), (1,1,6) , (1,3,3)$, or 
\item $g \geq 4$ and $  (d_1,\dots,d_g) = (1,\dots,1,g+1) , (1,\dots,1,g+2), (1,\dots,1,2,g)$.
\end{itemize}
\end{enumerate}
\end{prop}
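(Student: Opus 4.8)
The plan is to refine L.~Fuentes García's argument — which proves $\beta(D)<1$ for $|D|\geq 2g$ by decomposing $D$ into a product of types and invoking \autoref{prop_beta(d_1,...,d_g)} — the new ingredient being a \emph{balanced} distribution of the $1$'s occurring in $D$ among the factors. Write $D=(1^{(r)},d_{r+1},\dots,d_g)$, where $r=\#\{\,i:d_i=1\,\}$; then $|D|\geq 2g$ forces $r\leq g-1$ and $\sum_{i=r+1}^{g}(d_i-2)=\sum_{i>r}d_i-2(g-r)\geq(2g-r)-2(g-r)=r$. Hence I can choose integers $0\leq a_i\leq d_i-2$ $(r+1\leq i\leq g)$ with $\sum_i a_i=r$ and write
\[
D=(1^{(a_{r+1})},d_{r+1})\times(1^{(a_{r+2})},d_{r+2})\times\cdots\times(1^{(a_g)},d_g),
\]
so that \autoref{prop_beta(d_1,...,d_g)} yields $\beta(D)\leq\max_i\beta(1^{(a_i)},d_i)$, each factor being a ``block'' $(1^{(m_i-1)},e_i)$ of dimension $m_i:=a_i+1$ with $\sum_i m_i=g$ and $e_i:=d_i\geq m_i+1$. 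Since each $m_i$ is at most $g$ minus the number of the remaining factors, one can arrange all $m_i\leq g-1$ as soon as there are at least two factors (i.e.\ $r\leq g-2$), and — using the slack $\sum(d_i-2)\geq r$ to keep the split balanced — all $m_i\leq g-2$ as soon as there are at least three factors, or there are two factors with $d_{g-1}\geq 3$. (When $d_1\geq 2$, i.e.\ $r=0$, a shortcut is $\beta(D)\leq\beta(1,d_2/d_1,\dots,d_g/d_1)/d_1\leq 1/d_1\leq 1/2$, using $\beta(nL)=\beta(L)/n$ from \autoref{lem_divisor}.)

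Next I would estimate a single block. For $(1^{(m-1)},e)$ with $e\geq m+1$, \autoref{cor_refinement} applied with $A_j=m-j+1$ gives $\beta(1^{(m-1)},e)\leq\max\{(m-1)/m,\ m/e\}\leq m/(m+1)$, whereas the choice $A_1=m+1$, $A_j=m-j+1$ for $j\geq 2$ gives $\beta(1^{(m-1)},e)<(m-1)/m$ once $e$ is slightly larger ($e\geq m+3$ if $m\geq 4$, and $e\geq 7$ if $m\in\{2,3\}$). Feeding this back into the first step: an arbitrary $D$ is a product of blocks of dimension $\leq g$, each contributing $\leq g/(g+1)$, which proves (1); if $D$ is not of the form $(1^{(g-1)},d_g)$ then all blocks have dimension $\leq g-1$, each contributing $\leq(g-1)/g$, which proves (2) for such $D$; and if all blocks have dimension $\leq g-2$ then each contributes $<(g-1)/g$, which proves (3) for such $D$.

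What remains are the types for which the splitting cannot make all blocks small enough, and these turn out to be precisely the ones listed: (a) $D=(1^{(g-1)},d_g)$ with $d_g\geq g+1$, treated directly by \autoref{cor_refinement} as above — one gets $\beta\leq(g-1)/g$ for $d_g\geq g+2$ and $\beta<(g-1)/g$ past the thresholds of the previous step, leaving $d_g=g+1$ as the sole exception for (2) and $d_g\in\{g+1,g+2\}$ (respectively $\{4,5,6\}$ for $g=3$, $\{3,4,5,6\}$ for $g=2$) as the exceptions for (3); and (b) $D=(1^{(g-2)},2,g)$ with $g$ even and $g\geq 4$, where the only usable decomposition is $(2)\times(1^{(g-2)},g)$ and $(1^{(g-2)},g)=(1^{(m-1)},m+1)$ with $m=g-1$ only gives $\beta\leq(g-1)/g$. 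For each exceptional type I would still confirm the weaker assertions it is not exempted from: for $(1^{(g-1)},g+1)$ — which comprises $(1,3)$ and $(1,1,4)$ — only (1) is needed, and for every other exceptional type (namely $(1^{(g-1)},g+2)$ and its $g\leq 3$ analogues, $(1^{(g-2)},2,g)$, $(1,3,3)$, $(2,2)$, $(2,4)$) both (1) and (2) are needed, all via the same \autoref{cor_refinement}/\autoref{prop_beta(d_1,...,d_g)} computations. The small-dimensional cases $g\leq 3$ would be disposed of directly by the same tools (for $g\leq 2$ also using $\beta(1,d)\leq 1/\lfloor\sqrt d\,\rfloor$ from \autoref{thm_Ito:2020ab} and $\beta(nL)=\beta(L)/n$).

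The hard part will be the bookkeeping: organizing the case split — first by the number $r$ of $1$'s in $D$, then by the size of $d_g$ relative to $r$ — so that the surviving families coincide exactly with the three stated lists, and verifying that each surviving type genuinely satisfies the weaker claims it is not exempted from. A secondary technical point is selecting the auxiliary integers $A_j$ in \autoref{cor_refinement} for $d_g$ in the intermediate range $g+1\leq d_g<2^g$, where \autoref{thm_Ito:2020ab} on its own yields only $\beta<1$.
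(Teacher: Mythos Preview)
Your approach is correct and takes a somewhat different organizational route from the paper's.  The paper argues by induction on $g$: setting $n=\min\{i:d_i\geq 2\}$, it either (Case~1, $d_g\geq n+1$) uses the two-factor split $(1^{(n-1)},d_g)\times(d_n,\dots,d_{g-1})$, or (Case~2, $d_g\leq n$) peels off the single block $(1^{(d_g-2)},d_g)$, which satisfies $|D_1|=2\lambda(D_1)$, and applies the induction hypothesis to the remaining factor.  You instead decompose $D$ all at once into the blocks $(1^{(a_i)},d_i)$, one for each $d_i\geq 2$, and observe that $\sum m_i=g$ forces every $m_i\leq r+1$; this replaces the induction by a one-step pigeonhole.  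Both proofs rest on exactly the same block estimates---your two choices of $A_j$ in \autoref{cor_refinement} are the paper's displayed bounds $\beta(1^{(m-1)},e)\leq\max\{(m-1)/m,\,m/e\}$ and $\leq\max\{(m-2)/(m-1),\,(m+1)/e\}$---and both funnel the analysis of~(3) down to the same two residual families $(1^{(g-1)},d_g)$ and $(1^{(g-2)},2,d_g)$, handled identically.  Your route makes the reason these are the only survivors more transparent (it is literally $m_i\leq r+1$); the paper's induction is marginally tidier for~(1), since Case~2 gives $\beta(D_i)\leq(g-1)/g$ for free without balancing.

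One small point to tighten in your write-up: in case~(b) you only name $d_g=g$, but the family $(1^{(g-2)},2,d_g)$ with $d_g>g$ also has a block of dimension $g-1$ and so does not fall under ``all blocks $\leq g-2$''.  It is of course handled by your \emph{first} block estimate applied to $(1^{(g-2)},d_g)$, giving $\max\{(g-2)/(g-1),(g-1)/d_g\}<(g-1)/g$ once $d_g>g$; just make that step explicit, since your stated refinement $\beta<(m-1)/m$ only kicks in at $e\geq m+3$ and is not what is being used here.
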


\begin{proof}
As in \autoref{subsec_type}, we set
$\lambda (D) \coloneqq g  , |D| \coloneqq \sum_{i=1}^{g} d_{i}$. 
Using this notation,
the condition $d_1 + \dots + d_g \geq  2g $ is written as $|D| \geq 2  \lambda (D)$. 
We prove this proposition combining \autoref{prop_beta(d_1,...,d_g)} with the bounds
\begin{align}\label{eq_g/d}
 \beta(1,\dots,1,d) &=\beta(1^{(g-1)} , d) \leq \max\{(g-1)/g, g /d\} \quad \text{for} \quad g \geq 1,\\
\label{eq_g+1/d}
\beta(1,\dots,1,d) &=\beta(1^{(g-1)} , d) \leq \max\{(g-2)/(g-1), (g+1) /d\} \quad \text{for} \quad g \geq 3.
\end{align}
We note that \ref{eq_g/d} follows from \autoref{cor_refinement} by taking $A_i = g-i+1$.
Similarly, \ref{eq_g+1/d} follows from \autoref{cor_refinement} by taking $ A_1=g+1$ and  $A_i = g-i+1$ for $2 \leq i \leq g$.

\vspace{2mm}

 We show (1)  by the induction on $\lambda(D)$.
When $\lambda(D)=1$, (1) follows from \autoref{lem_divisor} (iii). 

Assume $g \geq 2$ and (1) holds for $\lambda(D)  \leq g-1$.
Let $D$ be a type such that $\lambda(D)=g$ and $ |D| \geq 2 \lambda(D)=2g$.
Set  $n=\min\{ 1 \leq i \leq g \, | \, d_i \geq 2\}$ and hence we have $D=(1^{(n-1)} , d_n ,\dots,d_g) $ with $d_n \geq 2$.
We note that $d_g \geq 2$ by $ |D| \geq 2g$ and hence $\{ 1 \leq i \leq g \, | \, d_i \geq 2\}$ is not empty.

\vspace{1mm}
\noindent
{\bf Case 1}: $d_g \geq n+1$.
In this case,
we consider a decomposition 
\[
D =D_1 \times D_2 = (1^{(n-1)}, d_g)  \times  (d_{n},\dots, d_{g-1}).
\]
By \ref{eq_g/d}, we have $\beta(D_1) \leq \max\{(n-1)/n, n/d_g\} \leq n/(n+1)$.
We also have $\beta(D_2) \leq 1/d_{n} \leq 1/2$.
Hence 
\begin{align}\label{eq_case1}
\beta(D) 
\leq  \max \left\{ (n-1)/n, n/d_g, 1/d_n \right\} 
\leq  \max \left\{ n/(n+1), 1/2 \right\} \leq g/(g+1).
\end{align}

\vspace{1mm}
\noindent
{\bf Case 2}: $d_g \leq n$.
We consider 
\[
D =D_1 \times D_2 =(1^{(d_g-2)},d_g) \times  (1^{(n-d_g+1)}, d_{n},\dots, d_{g-1}) .
\]
Since $2 \leq d_g \leq n\leq g$, 
we have  $1 \leq \lambda(D_i) \leq g-1$ for $i=1,2$.
Furthermore,
it holds that  $|D_1| = 2 d_g -2 =2  \lambda (D_1)  $ and $| D_2| \geq 2  \lambda (D_2) $,
 where the latter follows from the former and $ |D| \geq 2 \lambda(D)$.
Hence we have 
\begin{align*}
 \beta(D_i) \leq \lambda(D_i) / (\lambda (D_i) +1)  \leq (g-1)/g  \quad \text{for} \quad i=1,2
\end{align*}
by induction hypothesis.
Thus it holds that  $\beta(D) \leq \max \left\{ \beta(D_1), \beta(D_2) \right\}  \leq (g-1)/g < g/(g+1)$.

In both cases, we have $\beta(D) \leq g/(g+1)$ and hence (1) follows.

\vspace{2mm}
\noindent
(2) Assume $g \geq 2$ and we use the above notation.
In Case 2, we always have $\beta(D)\leq (g-1)/g $.
In Case 1, it holds that 
\[
\beta(D) \leq  \max \left\{ (n-1)/n, n/d_g, 1/d_n \right\}  \leq \max \left\{ (g-1)/g, n/d_g\right\} 
\]
by \ref{eq_case1},  $n \leq g$ and $ d_n \geq 2$.
Hence 
$\beta(D) > (g-1)/g$ could happen only when $(g-1)/g  < n/d_g  \leq n /(n+1)$,
where the latter inequality follows from the assumption $d_g \geq n+1$ in Case 1.
Since $n \leq g$, this implies $n=g$ and $d_g=g+1$, that is, $D=(1^{(g-1)},g+1)$.
Hence (2) follows.

\vspace{2mm}
\noindent
(3) For $g=2$, (3) follows from 
\autoref{lem_divisor} (ii) and \autoref{thm_Ito:2020ab}.
Hence we may assume $g \geq 3$ and $D$ is not in the list of (3).

 If $d_g =2$, then $D=(2,\dots,2) $ since  $2g \leq \sum_i d_i \leq g d_g =2g$.
In this case, we have $\beta(D)=1/2< (g-1)/g$.

If $n=g$, that is, if $D=(1^{(g-1)},d)$,
then $\beta(D) \geq (g-1)/g$ could happen only when $(g+1)/d \geq (g-1)/g$ by \ref{eq_g+1/d}.
This condition is equivalent to
\[
d \leq \frac{g(g+1)}{g-1} =g+2 + \frac{2}{g-1},
\]
that is, $d \leq 6$ for $g=3$, and $d \leq g+2$ for $g \geq 4$.
These types are listed in (3) as exceptions.

Thus we may assume $d_g \geq 3$ and $n \leq g-1$.

In Case 2,
$\lambda(D_1) =d_g -1 \leq n-1 \leq g-2$ and $\lambda(D_2) =g-d_g +1 \leq g-2$.
Hence $\beta(D_i) \leq \lambda(D_i) /(\lambda(D_i)+1) < (g-1)/g$ for $i =1,2$ by (1) and we have $\beta(D) < (g-1)/g$.

In Case 1, it holds that
\[
\beta(D) \leq  \max \left\{ (n-1)/n, n/d_g, 1/d_n \right\}  \leq \max \left\{ (g-2)/(g-1), n/d_g\right\} 
\]
by \ref{eq_case1},  $n \leq g-1$, $g\geq 3$ and $ d_n \geq 2$.
Hence 
$\beta(D) \geq (g-1)/g$ could happen only when $ (g-1)/g  \leq n/d_g \leq n /(n+1)$,
which implies $ n=g-1, d_g=g$.
Thus $D = (1^{(g-2)},d_{g-1},g)$ with $d_{g-1} \geq 2$.
Since $D$ is not in the list of  (3), we have $g \geq 4$ and $d_{g-1} \geq 3$.
Consider 
\[
D=D_1 \times D_2 =(1,d_{g-1} ) \times (1^{(g-3)}, g).
\]
Since $d_{g-1} \geq 3$, we have $\beta(D_1 ) \leq 2/3 $ by \ref{eq_g/d}.
Since $|D_2| = 2g -3 \geq 2 \lambda(D_2)$,
we have $\beta(D_2) \leq \lambda(D_2)/(\lambda(D_2)+1) =  (g-2)/(g-1) $ by (1).
Hence  it holds that $\beta(D) \leq \max \{ 2/3, (g-2)/(g-1)\} < (g-1)/g $ by $g \geq 4$.

Thus  we have $\beta(D) < (g-1)/g $ in both Case 1 and Case 2 and (3) follows.
\end{proof}

For a polarized abelian variety $(X,L)$ of dimension $g$, 
Blo{\ss} \cite[Theorem 1.1]{Bloss:2019aa} shows that the Infinitesimal Torelli Theorem holds for any smooth $Y \in |L|$ if $X$ is simple and $h^0(L) > (g/(g-1))^{g} \cdot g!$.
We say that the Infinitesimal Torelli Theorem holds for $Y$ if a suitable period map of a Kuranishi family of $Y$ is an immersion, 
which is equivalent to the injectivity of the natural map $H^1(Y , T_Y) \rightarrow \Hom(H^0(Y,K_Y), H^1(V, \Omega^{\dim Y-1}_Y) )$ (see \cite{Bloss:2019aa} for detail).
As a corollary of \autoref{prop_refiniment}, we can show \autoref{cor_ITT}.

\begin{proof}[Proof of \autoref{cor_ITT}]
The case $g=2$ can be shown as in \cite[Section 3]{Bloss:2019aa}:
It is known that the Infinitesimal Torelli Theorem does not hold for a smooth curve $C$
if and only if $C$ is an hyperelliptic curve with genus greater than two.
Hence if there exists a smooth $Y \in |L|$ for which the Infinitesimal Torelli Theorem does not holds,
the type of $(X,L)$ must be one of $ (1,2), (1,3), (1,4)$ by \cite[Theorem 2.8]{MR3968899}.
Since we assume $d_1+d_2 \geq 2g=4$ and $(d_1,d_2) \neq (1,3) ,(1,4)$, this theorem holds in the case $g=2$.

Hence we may assume $g \geq 3$.
By \cite[Lemma 2.1]{Bloss:2019aa}, it suffices to show the surjectivity of $H^0(L) \otimes H^0(L^{g-1}) \rightarrow H^0(L^{g}) $.
A similar argument as in the proof of  \cite[Corollary 8.2 (b)]{{MR4157109}}  implies that the natural map $H^0(L) \otimes H^0(L^{g-1}) \rightarrow H^0(L^{g}) $ is surjective if $ \beta(L) < (g-1)/g$ (see also \cite[Theorem 1.2 (2)]{Ito:2021aa}).
Hence this theorem follows from \autoref{rem_beta(D)} (i) and \autoref{prop_refiniment} (3).
\end{proof}

By the inequality $(d_1+ \dots +d_g)/g \geq \sqrt[g]{d_1\dots d_g} $ between the arithmetic mean and the geometric mean,
the condition $d_1 +\cdots + d_g \geq 2 g$ in \autoref{cor_ITT}  is satisfied if $d_1\cdots d_g \geq 2^{g} $.
For $g \geq 2$,
we also have $(g/(g-1))^g \cdot g! \geq 2^g$,
which can be checked directly for $g=2,3,4$ and follows from $g! \geq \sqrt{2 \pi g} \,(g/e)^g \geq (g/e)^g$ for $g \geq 5$.
Hence the condition $h^0(L) =d_1 \cdots d_g> (g/(g-1))^{g} \cdot g!$ in \cite{Bloss:2019aa} implies $d_1 + \dots +d_g \geq 2g $.
Since the types in the list of \autoref{cor_ITT} do not satisfy  $d_1 \cdots d_g > (g/(g-1))^{g} \cdot g!$,
\autoref{cor_ITT} improves the bound in \cite{Bloss:2019aa} for general $(X,L)$.
However,
we do not know whether the statement of \autoref{cor_ITT} holds or not for simple abelian varieties $X$ in general.

\section{Types with $\beta(D) <1/2 $}\label{sec_projective_normal}

In this section, 
we use the notation
$
\lambda (D) =g  , |D| =\sum_{i=1}^{g} d_{i},   \chi(D) =  \prod_{i=1}^g d_i
$
in \autoref{subsec_type} 
for a type $D=(d_1,\dots, d_g)$.

The purpose of this section is to prove a slight improvement
of  \autoref{thm_proj_normality} 
as follows:

\begin{thm}\label{thm_beta<1/2}
Let $D=(d_1,\dots,d_g) $ be a type.
Then  $\beta(D) < 1/2$ holds if $\chi(D) \geq 2^{2g-1}$ and $D \neq (2,4^{(g-1)})$.

In particular,  $\beta(D) < 1/2$ holds  if $\chi(D) > 2^{2g-1}$.
\end{thm}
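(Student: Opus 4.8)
The plan is to prove $\beta(D) < 1/2$ by induction on $g = \lambda(D)$; the ``in particular'' clause is then immediate, since $\chi((2,4^{(g-1)})) = 2\cdot 4^{g-1} = 2^{2g-1}$, so $\chi(D) > 2^{2g-1}$ already forces $D \ne (2,4^{(g-1)})$. For $g = 1$ the hypotheses give $d_1 = \chi(D) \ge 2$ and $d_1 \ne 2$, hence $\beta(D) = 1/d_1 \le 1/3$ by \autoref{lem_divisor}(iii). For the inductive step I would split into three cases according to $d_1$.

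If $d_1 \ge 3$, then $\beta(D) = d_1^{-1}\beta(d_1^{-1}D) \le d_1^{-1} \le 1/3$ by \autoref{lem_divisor}(ii). If $d_1 = 2$, write $D = 2D'$ with $D' = (1, d_2/2, \dots, d_g/2)$, a type of dimension $g$; by \autoref{lem_divisor}(ii) it suffices to show $\beta(D') < 1$, and by \autoref{prop_refiniment}(1) this follows once $|D'| \ge 2g$, i.e.\ $|D| = 2|D'| \ge 4g$. The remaining point in this case is the elementary inequality $|D| \ge 4g$: writing $d_i = 2p_i$, so that $1 = p_1 | p_2 | \cdots | p_g$, one has $\chi(D) = 2^g \prod_i p_i$ and $|D| = 2\sum_i p_i$, and the claim becomes $\sum_i p_i \ge 2g$ under $\prod_i p_i \ge 2^{g-1}$; I would prove this by a short convexity-plus-integrality argument organised by the number $\ell$ of indices with $p_i = 1$, the exclusion of $(2,4^{(g-1)})$ being precisely what rules out equality when $\ell = 1$.

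The remaining, and principal, case is $d_1 = 1$; put $n = \min\{i : d_i \ge 2\} \ge 2$. If $d_{g-1} \le 2$, then $D = (1^{(n-1)}, 2^{(g-n)}, d_g)$ and $\chi(D) = 2^{g-n}d_g \ge 2^{2g-1}$ forces $d_g \ge 2^{g+n-1} \ge 2^{n+1} + 2(g-n) - 1$ for $g \ge 2$, so \autoref{cor_111222d} applies. If $d_{g-1} \ge 3$, set $q_m = \prod_{i=n}^m d_i$ (with $q_{n-1} := 1$) and let $m_0$ be the least $m \in \{n, \dots, g\}$ with $q_m \ge 2^{2m-1}$. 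If $m_0 < g$, decompose $D = D_1 \times D_2$ with $D_1 = (1^{(n-1)}, d_n, \dots, d_{m_0})$ and $D_2 = (d_{m_0+1}, \dots, d_g)$: both have dimension $< g$; $D_1$ contains an entry $1$, hence is not the forbidden type, and $\chi(D_1) = q_{m_0} \ge 2^{2m_0-1}$; and minimality of $m_0$ gives $q_{m_0} < 2^{2m_0-3}d_{m_0}$, which with $d_i \ge d_{m_0}$ for $i > m_0$ forces $\chi(D_2) = q_g/q_{m_0} > 2^{2(g-m_0)-1}$, so $D_2$ too satisfies the hypothesis (strictly). By induction and \autoref{prop_beta(d_1,...,d_g)}, $\beta(D) \le \max\{\beta(D_1), \beta(D_2)\} < 1/2$. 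If $m_0 = g$, all the non-maximal invariant factors are small ($q_m < 2^{2m-1}$ for $m < g$), which is close to the situation governed by \autoref{thm_beta_general_type}; there I would apply \autoref{thm_beta_general_type}(1) with a rapidly growing choice of $k_1, \dots, k_{g-2}$ (roughly $3^{j-1}$ on the indices with $d_{g-j} = 1$ and $1$ otherwise) and a matching $M$, using $\chi(D) \ge 2^{2g-1}$ and $q_{g-1} < 2^{2g-3}$ to force $d_g \ge 2M$ and hence make all three ratios in the bound of \autoref{thm_beta_general_type}(1) strictly $< 1/2$; the finitely many configurations this misses (such as $D = (1^{(n-1)}, 2, 4^{(\cdot)}, d_g)$, or $D = (1, c, c)$ with $c$ small) would be settled by a coprimality decomposition $D = (a) \times (b, bc, \dots)$ in the spirit of \autoref{ex_(1,c,cd)}, together with \autoref{cor_111222d}.

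The main obstacle is the case $d_1 = 1$ with $d_{g-1} \ge 3$. The point is that a crude decomposition need not respect the inductive hypothesis, since the ``leftover'' factor can accidentally be the forbidden type $(2, 4^{(\cdot)})$, whose Euler characteristic equals its threshold $2^{2\lambda-1}$ exactly; one therefore has to exploit the single slack factor of $2$ in $\chi(D) \ge 2^{2g-1} = 2\prod_i 2^{2\lambda(D_i)-1}$ and the divisibility chain $d_1 | \cdots | d_g$ to push $\chi(D_2)$ strictly above its threshold, and separately dispatch the degenerate situation $m_0 = g$, in which no splitting into two pieces of the required kind is available. I expect that the verification of $|D| \ge 4g$ when $d_1 = 2$ and the bookkeeping around the choice of $m_0$ will be the most calculation-heavy steps.
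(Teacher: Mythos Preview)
Your reductions for $d_1\geq 2$ and for $d_1=1$ with $d_{g-1}\leq 2$ are essentially the paper's, and your $m_0<g$ decomposition is a clean alternative that the paper does not use: the strict inequality $\chi(D_2)>2^{2\lambda(D_2)-1}$ really does follow, since from $q_{m_0-1}<2^{2m_0-3}$ one gets $\chi(D_2)>2^{2(g-m_0)+2}/d_{m_0}$, and combining this with $\chi(D_2)\geq d_{m_0}^{\,g-m_0}$ gives the claim whether $d_{m_0}\leq 8$ or $d_{m_0}\geq 4$.

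The genuine gap is the $m_0=g$ branch. Your proposed direct use of \autoref{thm_beta_general_type}(1), with $k_j=1$ whenever $d_{g-j}\geq 2$, fails on the infinite family $D=(1,4^{(g-2)},8)$ for $g\geq 4$: here $\chi(D)=2^{2g-1}$, $d_{g-1}=4\geq 3$, and $q_m=4^{m-1}<2^{2m-1}$ for all $m<g$, so indeed $m_0=g$. For any $k_1,\dots,k_{g-2}\geq 1$ one has $M_2=1+4k_1+\sum_{j\geq 2}k_j\geq g+2$, so the two constraints $M_2/(4M)<1/2$ and $M/8<1/2$ force $(g+2)/2<M<4$, which has no integer solution once $g\geq 4$. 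Neither fallback applies: every entry is a power of $2$, so a coprimality split in the style of \autoref{ex_(1,c,cd)} is unavailable, and $D$ is not of the shape $(1^{(n-1)},2^{(n')},d)$ covered by \autoref{cor_111222d}. In particular the ``finitely many configurations'' remark is not accurate.

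The paper's scheme is organized differently: rather than accumulating invariant factors from the bottom, it peels off $d_g$ together with a carefully chosen number of $1$'s and all the $2$'s into $D_1=(1^{(m-1)},2^{(n')},d_g)$, picking $m$ so that \autoref{cor_111222d} yields $\beta(D_1)<1/2$ while $\chi(D_1)\leq 2^{2\lambda(D_1)}$; the remaining entries, all $\geq 3$, form $D_2$. For $(1,4^{(g-2)},8)$ this is simply $(1,8)\times(4^{(g-2)})$. The only residual cases, arising when $(m,n')=(1,0)$, are types with $d_g\in\{5,6\}$ and no entry equal to $2$, which the paper finishes using products of $(1,5,5)$ or $(1,6,6)$ and \autoref{ex_g=3_gen_type_variant}. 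Incorporating this ``pair $d_g$ with some $1$'s'' move into your $m_0=g$ branch would likely close the gap, but as written the argument is incomplete there.
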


\begin{rem}\label{rem_sharp_beta}
Any polarized abelian variety $(X,L)$ of type $(1,2^{(g-1)})$ has basepoints by \cite[Corollary 2.6]{MR1360615}. 
Hence we have $\beta (1,2^{(g-1)})=1$ and $\beta(2,4^{(g-1)}) =1/2$.
Thus the bound in $\chi(D) > 2^{2g-1}$ is sharp.
\end{rem}

The strategy of the proof is the same as that of  \autoref{prop_refiniment}.
That is, we decompose $D$ to types whose basepoint-freeness thresholds are already known to be less than $1/2$.
By \autoref{thm_Ito:2020ab}, \autoref{ex_g=3_gen_type_variant}, \autoref{cor_111222d},
we know that
\begin{itemize}
\item $\beta(1^{(g-1)},d) <1/2$ if $d \geq 2^{g+1} -1$.
\item More generally, $\beta(1^{(n-1)}, 2^{(n')} ,d ) < 1/2$ for $n \geq 1, n' \geq 0$ if $d \geq 2^{n+1} +2n' -1 $.
\item $\beta(1,c,c) < 1/2$ for $c \geq 5$.
\end{itemize}


We use the following lemmas in the proof of \autoref{thm_beta<1/2}.
 
\begin{lem}\label{lem_d_1=2}
Let $D =(d_1, \dots,d_g)$ be a type such that $d_1 \geq 3$ or $d_1=2$ and $|D| \geq 4g$.
Then it holds that $\beta(D)<1/2$.
\end{lem}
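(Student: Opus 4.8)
The plan is to split on the size of $d_1$ and reduce each case to facts already available. Suppose first that $d_1 \ge 3$. Since $d_1 \mid d_2 \mid \dots \mid d_g$, the sequence $D' := (1, d_2/d_1, \dots, d_g/d_1)$ is again a type, and $D$ arises from $D'$ by multiplying every entry by $d_1$. Then, exactly as in the proof of \autoref{thm_beta_general_type} (2), the scaling law of \autoref{lem_divisor} \ref{item_beta(nl)} gives $\beta(D) \le \beta(D')/d_1$ (taking the infimum of $\beta(X, d_1 L)$ over all $(X,L)$ of type $D'$ bounds $\beta(D)$ from above), and combining this with $\beta(D') \le 1$ from \autoref{lem_divisor} (i) yields $\beta(D) \le 1/d_1 \le 1/3 < 1/2$. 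So this case costs essentially nothing.

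Now suppose $d_1 = 2$ and $|D| \ge 4g$. The idea is again to divide the type by $d_1 = 2$, this time keeping track of $|\cdot|$. Because $2 \mid d_i$ for every $i$, the halved sequence $D' := (1, d_2/2, \dots, d_g/2)$ is an honest type with $\lambda(D') = g$, and a one-line computation gives $|D'| = 1 + \tfrac{1}{2}\sum_{i=2}^{g} d_i = |D|/2 \ge 2g = 2\lambda(D')$. Hence $D'$ satisfies the hypothesis of the basepoint-freeness theorem of Fuentes Garc\'{\i}a, i.e.\ the rephrasing of \cite[Theorem 1.1]{MR2231163} recorded just before \autoref{prop_refiniment}, so $\beta(D') < 1$; then $\beta(D) \le \tfrac{1}{2}\beta(D') < \tfrac{1}{2}$ again by \autoref{lem_divisor} \ref{item_beta(nl)}.

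The statement is soft, so I do not expect a serious obstacle. The only points that need a moment's care are that dividing $D$ by $d_1$ genuinely returns an integral type — automatic from the divisibility chain $d_1 \mid d_2 \mid \dots \mid d_g$ — and the bookkeeping identity $|D'| = |D|/2$, which is precisely what converts the hypothesis $|D| \ge 4g$ into the shape $|D'| \ge 2\lambda(D')$ needed to invoke \cite{MR2231163}. If there is any content at all, it is the observation that Fuentes Garc\'{\i}a's basepoint-freeness result, applied after scaling the type down by $2$, upgrades to the strict bound $\beta(D) < 1/2$ thanks to the scaling law $\beta(2D') = \tfrac{1}{2}\beta(D')$, with $|D| \ge 4g$ being exactly the threshold at which this works.
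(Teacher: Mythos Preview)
Your proof is correct and follows essentially the same approach as the paper: in both cases you divide the type by $d_1$ to obtain $D'$, use the scaling law $\beta(D)=\beta(D')/d_1$ from \autoref{lem_divisor}~\ref{item_beta(nl)}, and then invoke either the trivial bound $\beta(D')\leq 1$ (when $d_1\geq 3$) or Fuentes Garc\'{\i}a's basepoint-freeness criterion via $|D'|=|D|/2\geq 2g$ (when $d_1=2$). The paper's proof is identical in content, just more terse.
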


\begin{proof}
Set $D'= (d_1/d_1, d_2/d_1,\dots, d_g/d_1) $.
If $d_1 \geq 3$, we have $\beta(D) \leq \beta(D')/d_1 \leq 1/d_1 <1/2$.

If $d_1 =2$,
$\beta(D)<1/2$ is equivalent to $\beta(D') <1$,
which follows from $|D'| =|D|/2 \geq  2g$ and \cite[Theorem 1.1]{MR2231163}.
\end{proof}

\begin{lem}\label{lem_reduction}
To prove \autoref{thm_beta<1/2},
we may assume $d_1 =1$, $g \geq 3$ and $d_g \geq 5$. 
\end{lem}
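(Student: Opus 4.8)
The plan is to remove, one at a time, the cases in which one of the three extra conditions fails, reducing each to a situation already settled by \autoref{lem_d_1=2}, by \autoref{thm_Ito:2020ab}, or directly by the hypothesis $\chi(D)\geq 2^{2g-1}$. First I would dispose of $d_1\geq 2$. If $d_1\geq 3$, then \autoref{lem_d_1=2} gives $\beta(D)\leq 1/d_1<1/2$, so nothing remains. If $d_1=2$, then \autoref{lem_d_1=2} already yields $\beta(D)<1/2$ as soon as $|D|\geq 4g$, so it suffices to check that a type with $d_1=2$, $\chi(D)\geq 2^{2g-1}$ and $|D|\leq 4g-1$ can only be $(2,4^{(g-1)})$ --- the type excluded from \autoref{thm_beta<1/2}.

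For this I would write $d_i=2e_i$, so that $E=(1,e_2,\dots,e_g)$ is again a type with $\chi(D)=2^{g}\chi(E)$ and $|D|=2|E|$; the hypotheses translate into $\chi(E)\geq 2^{g-1}$ and (since $|D|$ is even) $|E|\leq 2g-1$. The key input is the elementary inequality $e-1\geq\log_2 e$, valid for every positive integer $e$ with equality precisely when $e\in\{1,2\}$ (the function $e\mapsto e-1-\log_2 e$ is strictly convex on $(0,\infty)$ and vanishes at $1$ and $2$). Summing it over $i$,
\begin{align*}
|E| &= \sum_{i=1}^{g} e_i = g + \sum_{i=1}^{g}(e_i-1) \\
&\geq g + \sum_{i=1}^{g}\log_2 e_i = g + \log_2\chi(E) \geq 2g-1,
\end{align*}
so $|E|\leq 2g-1$ forces equality throughout: $\chi(E)=2^{g-1}$ and each $e_i\in\{1,2\}$. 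Since $E$ is a type with $e_1=1$, this gives $E=(1^{(g-k)},2^{(k)})$, and $2^{k}=\chi(E)=2^{g-1}$ forces $k=g-1$; hence $E=(1,2^{(g-1)})$ and $D=(2,4^{(g-1)})$. This settles every type with $d_1\geq 2$, so from now on $d_1=1$.

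It remains to reduce to $g\geq3$ and $d_g\geq5$. If $g=1$ then $\chi(D)=1<2^{2g-1}$, so the hypothesis of \autoref{thm_beta<1/2} is vacuous. If $g=2$ then $D=(1,d_2)$ with $d_2=\chi(D)\geq 2^{3}=8$: for $d_2\geq 9$ we get $\beta(D)\leq 1/\lfloor\sqrt{d_2}\rfloor\leq 1/3<1/2$ by \autoref{thm_Ito:2020ab} (1), while for $d_2=8$ we have $8\geq 2^{2}+2+1$, so \autoref{thm_Ito:2020ab} (2) gives $\beta(D)<1/2$. Hence $g\geq3$ may be assumed. Finally, still with $d_1=1$: each $d_i$ divides $d_g$, so if $d_g\leq 4$ then $d_i\leq 4$ for all $i$, whence $\chi(D)=\prod_{i=2}^{g}d_i\leq 4^{g-1}=2^{2g-2}<2^{2g-1}$, contradicting the hypothesis; therefore $d_g\geq5$, completing the reduction.

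I expect the $d_1=2$ step to be the main obstacle: all ``nearly extremal'' types of that shape have to be excluded simultaneously, and the convexity inequality above is precisely what does this, since in the extremal case it pins down both the value of $\chi(E)$ and every entry $e_i$ at once. The remaining steps are bookkeeping, using only \autoref{lem_d_1=2}, \autoref{thm_Ito:2020ab}, and the divisibilities $d_i\mid d_g$.
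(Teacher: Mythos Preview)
Your proof is correct and follows the same overall reduction scheme as the paper (dispose of $d_1\geq 2$, then $g\leq 2$, then $d_g\leq 4$), but two of the steps are carried out differently in ways worth noting.

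For the $d_1=2$ step, the paper applies AM--GM directly to $d_2,\dots,d_g$: from $d_2\cdots d_g\geq 2^{2g-2}$ one gets $(d_2+\dots+d_g)/(g-1)\geq 4$, with equality only when all $d_i=4$; parity then forces $|D|\geq 4g$ unless $D=(2,4^{(g-1)})$. Your termwise inequality $e-1\geq\log_2 e$ achieves the same end by a slightly different route; both identify the extremal type in one stroke.

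For $g=2$, the paper invokes the complete list of surface types with $\beta(D)\geq 1/2$ from \cite[Table~1]{Ito:2020ab}, whereas you handle $(1,d_2)$ with $d_2\geq 8$ using only \autoref{thm_Ito:2020ab}. Your treatment is more self-contained; the paper's has the advantage of simultaneously pinning down all the borderline surface types, which is informative but not needed for the lemma itself.
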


\begin{proof}
By \autoref{lem_d_1=2}, we may assume $d_1 \leq 2$.

Assume $d_1=2$.
Then $\chi(D) \geq 2^{2g-1 } $ is equivalent to $d_2 \cdots d_g \geq 2^{2g-2}$.
Hence we have $(d_2 + \dots +d_{g})/(g-1) \geq \sqrt[g-1]{2^{2g-2}} =4$.
Furthermore, the equality holds if and only if $ d_2 \cdots d_g = 2^{2g-2} $ and $d_2 = \dots =d_g$, that is, $d_2 =\dots =d_g=4$. 
Hence if $\chi(D) \geq 2^{2g-1 } $ and $D \neq (2,4^{(g-1)})$, then it holds that $d_2 + \dots +d_{g} > 4 (g-1 ) =4g-4$.
Since $ d_2, \dots,d_g$ are  even by $d_1=2$, 
we have $d_2 + \dots +d_{g} \geq 4g-2$ and hence $|D| \geq 4g$.
Thus $\beta(D) <1/2$  follows from \autoref{lem_d_1=2}.
Hence we may assume $d_1=1$.

If $g=1$,  $\chi(D) \geq 2^{2g-1}$ and $D \neq (2,4^{(g-1)}) =(2)$ mean that $d_1 \geq 3$.
Hence $\beta(D) = 1/d_1 <1/2$ follows.

Assume $g=2$.
By \autoref{thm_Ito:2020ab}, \autoref{lem_d_1=2} and \cite[Table 1]{Ito:2020ab},
 $\beta(D) \geq 1/2$ holds if  and only if
$D$ is one of the following:
\[
(1,1), (1,2), (1,3) , (1,4) ,(1,5), (1,6) , (2,2), (2,4).
\]
Since $\chi(D) < 8=2^{2g-1}$ for these types except for $(2, 4^{(g-1)}) = (2,4)$,
 \autoref{thm_beta<1/2} holds for $g=2$.
Hence we may assume $g \geq 3$.

As we already see, we may assume $d_1=1$.
Then $2^{2g-1} \leq \chi(D) = d_2 \cdots d_g \leq d_g^{g-1} $, which implies $d_g >4$.
Hence we may assume that $d_g \geq 5$.
\end{proof}


\begin{lem}\label{lem_decomp_4^g}
Assume that \autoref{thm_beta<1/2} holds when $\lambda(D) \leq g-1$.
Let $D $ be a type with $\lambda(D)=g$, $\chi(D) \geq 2^{2g-1}$. 
Then $\beta(D) <1/2$ holds if there exists a decomposition $D=D_1 \times D_2$ with $g_i=\lambda(D_i) $ 
which satisfies
\[
g_1 \geq 1, \quad \chi(D_1)  \leq 2^{2g_1}, \quad \beta(D_1 ) <1/2, \quad D_2 \neq (2, 4^{(g_2-1)}).
\]
Furthermore, the last condition $D_2 \neq (2, 4^{(g_2-1)})$ is satisfied if $\chi(D_1) < 2^{2g_1}$.
\end{lem}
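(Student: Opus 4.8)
The plan is to reduce everything to the inductive hypothesis applied to the second factor $D_2$, via the product bound of \autoref{prop_beta(d_1,...,d_g)}. The first ingredient I would record is the multiplicativity of $\chi$ under $\times$: writing $D_1\times D_2=(\delta_1,\dots,\delta_g)$, the defining isomorphism $\bigoplus_i \Z/d_i\Z \oplus \bigoplus_{i'}\Z/d'_{i'}\Z \simeq \bigoplus_j \Z/\delta_j\Z$ forces equality of the orders of these finite groups, hence $\chi(D)=\chi(D_1)\,\chi(D_2)$. The second observation is that $\lambda(D_2)=g-\lambda(D_1)\leq g-1$ since $g_1\geq 1$ (and $D_2$ is a genuine type, so $g_2\geq 1$; were $g_2=0$ the statement would be vacuous as $D=D_1$).

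Next I would extract a lower bound on $\chi(D_2)$. From $\chi(D)\geq 2^{2g-1}$ and $\chi(D_1)\leq 2^{2g_1}$ we get
\[
\chi(D_2)=\frac{\chi(D)}{\chi(D_1)}\geq \frac{2^{2g-1}}{2^{2g_1}}=2^{2(g-g_1)-1}=2^{2g_2-1}.
\]
Since $\lambda(D_2)=g_2\leq g-1$, the assumed case of \autoref{thm_beta<1/2} in length $\leq g-1$ applies to $D_2$; together with the hypothesis $D_2\neq(2,4^{(g_2-1)})$ it yields $\beta(D_2)<1/2$. Combining this with the hypothesis $\beta(D_1)<1/2$, \autoref{prop_beta(d_1,...,d_g)} gives
\[
\beta(D)=\beta(D_1\times D_2)\leq \max\{\beta(D_1),\beta(D_2)\}<\tfrac12,
\]
which is the conclusion.

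For the final assertion I would argue contrapositively: if $D_2=(2,4^{(g_2-1)})$, then $\chi(D_2)=2\cdot 4^{g_2-1}=2^{2g_2-1}$, so $\chi(D_1)=\chi(D)/\chi(D_2)\geq 2^{2g-1}/2^{2g_2-1}=2^{2g_1}$, and therefore the strict inequality $\chi(D_1)<2^{2g_1}$ cannot hold. Thus $\chi(D_1)<2^{2g_1}$ implies $D_2\neq(2,4^{(g_2-1)})$.

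I do not anticipate any real obstacle here: the argument is a short deduction once the multiplicativity $\chi(D_1\times D_2)=\chi(D_1)\chi(D_2)$ is in hand. The only points that need a moment of care are verifying $\lambda(D_2)\leq g-1$ so that the induction is legitimate, and the exponent bookkeeping $2g-1-2g_1=2g_2-1$.
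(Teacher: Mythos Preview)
Your proof is correct and follows essentially the same approach as the paper: use $\chi(D)=\chi(D_1)\chi(D_2)$ and $\chi(D_1)\leq 2^{2g_1}$ to get $\chi(D_2)\geq 2^{2g_2-1}$, apply the inductive hypothesis (valid since $g_2\leq g-1$) together with $D_2\neq(2,4^{(g_2-1)})$ to obtain $\beta(D_2)<1/2$, and conclude via \autoref{prop_beta(d_1,...,d_g)}. Your treatment of the final assertion is also the same as the paper's, just phrased contrapositively rather than directly.
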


\begin{proof}
By 
$
2^{2g-1} \leq  \chi(D) =\chi(D_1) \cdot \chi(D_2) \leq 2^{2 g_1} \cdot  \chi(D_2),
$
we have $\chi(D_2) \geq 2^{2g-1 -2 g_1} = 2^{2 g_2-1}  $ since $g=g_1+g_2$.
By $g_2 =g-g_1 \leq g-1$ and $D_2 \neq (2, 4^{(g_2-1)})$, we have $\beta(D_2) < 1/2$ by assumption.
Thus $\beta(D) =\beta(D_1 \times D_2) < 1/2$ follows from  \autoref{prop_beta(d_1,...,d_g)}.

Since $\chi(D_1) < 2^{2g_1}$ implies $\chi(D_2) > 2^{2 g_2-1} = \chi(2, 4^{(g_2-1)})$,
 the last statement follows.
\end{proof}

\begin{proof}[Proof of \autoref{thm_beta<1/2}]
The proof is done by the induction on $\lambda(D)$,
that is,
we assume that \autoref{thm_beta<1/2} holds when $\lambda(D) \leq g-1$,
and prove  \autoref{thm_beta<1/2} when $\lambda(D) =g$.
By \autoref{lem_reduction}
we may assume 
$d_1 =1, g \geq 3,   d_g \geq 5$.

\vspace{2mm}
Let $D=(d_1,\dots,d_g)$ be a type such that $\chi(D) \geq 2^{2g-1}$ and $D \neq (2,4^{(g-1)})$.
Set 
\[
n=\min\{ 1\leq  i \leq g  \, | \, d_i \geq 2 \} \geq 1, \quad n'=\#\{  1 \leq  i \leq g  \, | \, d_i=2 \} \geq 0.
\]
Hence we can write 
\[
D=(1^{(n-1) } , 2^{(n')}, d_{n+n'} , \dots, d_g)
\]
with $d_{n+n'}  \geq 3$.

\vspace{2mm}
\noindent
{\bf Case 1}: $d_g \leq 2n'+4$.

\vspace{1mm}
In this case, we have $n' \geq 1$ by $d_g \geq 5$.
In particular, $d_g$ is even and $d_g \geq 6$.
Hence $l:=(d_g-4)/2 $ is a positive integer.
By $d_g \leq 2n'+4$, we have $l \leq n'$.
Consider a decomposition $D=D_1 \times D_2 = (2^{(l)}, d_g) \times  (1^{(n-1)}, 2^{(n'-l)}, d_{n+n'}, \dots, d_{g-1}).$
Since $|D_1| = 2l +d_g = 4l+4 =4 \lambda(D_1)$, we have $\beta(D_1) < 1/2$ by \autoref{lem_d_1=2}.
Thus 
to show $\beta(D) <1/2$,
it suffices to see $\chi(D_1) < 2^{2\lambda(D_1)}$ by \autoref{lem_decomp_4^g}.

By $ d_g=2l+4$, $\lambda(D_1) =l+1$ and $ \chi(D_1) =2^l  \cdot d_g$,
the inequality $\chi(D_1) < 2^{2\lambda(D_1)}$ is equivalent to $l+2 < 2^{l+1}$,
which holds by $l \geq 1$.

\vspace{2mm}
\noindent
{\bf Case 2}: $d_g \geq 2n'+5$.

\vspace{1mm}
Let $m \geq 1$ be the integer such that $2^{m+1} -1 \leq d_g-2n'  < 2^{m+2} -1$.

If $m \geq n$, we consider a decomposition $D=D_1 \times D_2
=(1^{(n-1)}, 2^{(n')}, d_g) \times (d_{n+n'},\dots,d_{g-1})$.
By the choice of $m$ and $m \geq n$,
we have $\beta(D_1) < 1/2$ by \autoref{cor_111222d}.
Since $\beta(D_2) \leq 1/d_{n+n'} < 1/2 $, we have $\beta(D) < 1/2$.

If $m \leq n-1$, 
we consider  $D=D_1 \times D_2
=(1^{(m-1)} , 2^{(n')} ,d_g) \times (1^{(n-m)}, d_{n+n'},\dots,d_{g-1})$.
By the choice of $m$,
we have $\beta(D_1) < 1/2$ by \autoref{cor_111222d}.
Since $D_2 \neq (2, 4^{(\lambda(D_2)-1)})$ by $n-m \geq 1$,
$\beta(D) < 1/2$ holds if $\chi(D_1) \leq  2^{2\lambda(D_1)}$ by \autoref{lem_decomp_4^g}.
Since $\chi(D_1) = 2^{n'} \cdot d_g$ and $\lambda(D_1) = m+n'$,
$\chi(D_1) \leq 2^{2\lambda(D_1)}$ is equivalent to $ d_g \leq 2^{2m+n'}$.
Thus we have $\beta(D) < 1/2$ if $ 2^{m+2} +2n'-2 \leq 2^{2m+n'}$
since $d_g \leq 2^{m+2} +2n'-2$ by the choice of $m$.

\begin{claim}\label{calim_inequality}
If $(m,n') \neq (1,0)$,
$\beta(D) < 1/2$ holds.
\end{claim}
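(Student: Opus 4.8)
The claim is really a numerical assertion in disguise. In the subcase $m \geq n$ of Case 2 we have already produced a decomposition giving $\beta(D) < 1/2$ unconditionally (and since $d_1 = 1$ forces $n \geq 2$ here, the pair $(m,n') = (1,0)$ cannot even occur in that subcase). So the plan is to deal with the remaining subcase $m \leq n-1$. There the decomposition $D = D_1 \times D_2$ with $D_1 = (1^{(m-1)}, 2^{(n')}, d_g)$, together with \autoref{lem_decomp_4^g} and the bound $d_g \leq 2^{m+2} + 2n' - 2$ forced by the choice of $m$, reduces the goal to the inequality
\[
2^{m+2} + 2n' - 2 \leq 2^{2m+n'} .
\]

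I would establish this for all integers $m \geq 1$, $n' \geq 0$ with $(m,n') \neq (1,0)$ by a short induction on $n'$ with $m$ fixed. For $m \geq 2$ the base case $n' = 0$ reads $4\cdot 2^m - 2 \leq (2^m)^2$, which holds because $2^m \geq 4$ gives $2^{2m} \geq 4 \cdot 2^m > 2^{m+2}-2$; for $m = 1$ the base case $n' = 0$ fails, giving $6 \leq 4$ --- this is exactly the excluded pair --- so one instead starts the induction at $n' = 1$, where both sides equal $8$. The inductive step is the same in both cases: passing from $n'$ to $n'+1$ adds $2$ to the left-hand side and doubles the right-hand side, and $2^{2m+n'} \geq 2$ throughout (since $2m+n' \geq 2$), so the inequality is preserved. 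Feeding the resulting inequality back through \autoref{lem_decomp_4^g} yields $\beta(D) < 1/2$.

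The only point needing care is the bookkeeping of the small cases, so that exactly the pair $(m,n') = (1,0)$ is isolated and everything else is covered; there is no conceptual obstacle, the whole argument being a base-case-plus-doubling induction once the reduction to the displayed inequality is set up.
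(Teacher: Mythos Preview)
Your proposal is correct and follows the same route as the paper: both reduce the claim to the elementary inequality $2^{m+2}+2n'-2 \leq 2^{2m+n'}$ for $m\geq 1$, $n'\geq 0$ with $(m,n')\neq(1,0)$, relying on the already-handled subcase $m\geq n$ and on \autoref{lem_decomp_4^g} for the subcase $m\leq n-1$. The only difference is cosmetic---the paper proves the inequality by factoring $2^{2m+n'}-2^{m+2}=2^{m+2}(2^{m+n'-2}-1)$ and chaining estimates, while you run an induction on $n'$; both are routine one-paragraph verifications.
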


\begin{proof}[Proof of \autoref{calim_inequality}]
It suffices to see $ 2^{m+2} +2n'-2 \leq 2^{2m+n'}$.
Recall that $m \geq 1$ and $n' \geq 0$.
If $(m,n') \neq (1,0)$, then $m+n' \geq 2$ and hence it holds that 
\begin{align*}
2^{2m+n'} - 2^{m+2} &=2^{m+2} (2^{m+n'-2} -1) \\
& \geq 2^{2} (2^{m+n'-2} -1) \\
&\geq 2^2 (2^{n'-1} -1) = 2 \cdot 2^{n'} -4 \geq  2 (n'+1) -4 =2n'-2.
\end{align*}
\end{proof}

Thus we may assume $ (m,n') =(1,0)$.
Then $5 \leq d_g < 2^{m+2} +2n'-1 =7$, that is, $d_g =5,6$.

If $d_g=5$, we have $D=(1^{(a)}, 5^{(b)})$ with $a =n-1, a+b=g$.
Then 
\[
2^{2(a+b)-1} = 2^{2g-1}  \leq \chi(D) = 5^{b} < 8^b =2^{3b}
\]
implies $3b  > 2a+2b -1$, that is,  $b \geq 2a$.
Hence $D=(1^{(a)}, 5^{(b)})$ is decomposed as 
\[
(1,5,5) \times \dots \times (1,5,5) \times (5^{(b-2a)}),
\]
where $(1,5,5)$ appears $a$ times.
Since $\beta(1,5,5) <1/2$ by \autoref{ex_g=3_gen_type_variant} and $\beta(5^{(b-2a)}) =1/5 $,
we have $\beta(D) < 1/2$.

If $d_g =6$,
$D=(1^{(n-1)} , 3^{(a)}, 6^{(b)})$ for some $a \geq 0, b \geq 1$ with $n-1+a+b=g$ since $d_i \neq 2$ for any $i$ by $n'=0$.
Then 
\[
2^{2g-1} \leq \chi(D) = 3^a \cdot 6^{b} < 4^a \cdot 8^b =2^{2a+3b}
\]
implies $2a + 3b > 2g-1 = 2a + 2b +2(n-1) -1$,
that is, $b \geq 2(n-1)$.
Hence $D=(1^{(n-1)} , 3^{(a)}, 6^{(b)})$ is decomposed as 
\[
(1,6,6) \times \dots \times (1,6,6) \times (3^{(a)}, 6^{(b-2(n-1))}),
\]
where $(1,6,6)$ appears $n-1$ times.
Since $\beta(1,6,6) < 1/2 $ by  \autoref{ex_g=3_gen_type_variant} and $\beta(3^{(a)}, 6^{(b-2(n-1))}) \leq 1/3$,
we have $\beta(D) < 1/2$.
Hence \autoref{thm_beta<1/2} is proved.
\end{proof}

\begin{proof}[Proof of \autoref{thm_proj_normality}]
By \autoref{thm_Bpf_threshold} (2) and \autoref{rem_beta(D)} (i),
\autoref{thm_proj_normality} is an immediate consequence of \autoref{thm_beta<1/2}.
\end{proof}

\bibliographystyle{amsalpha}
\providecommand{\bysame}{\leavevmode\hbox to3em{\hrulefill}\thinspace}
\providecommand{\MR}{\relax\ifhmode\unskip\space\fi MR }
\providecommand{\MRhref}[2]{%
  \href{http://www.ams.org/mathscinet-getitem?mr=#1}{#2}
}
\providecommand{\href}[2]{#2}

\end{document}